\apptocmd{\sloppy}{\hbadness 10000\relax}{}{}
\definecolor{bwgreen}{rgb}{0.183,1,0.5}
\definecolor{bwmagenta}{rgb}{0.7,0.0,0.1}
\definecolor{bwblue}{rgb}{0.317,0.161,1}
\DeclareFontFamily{OT1}{rsfs}{}
\DeclareFontShape{OT1}{rsfs}{n}{it}{<-> rsfs10}{}
\DeclareMathAlphabet{\mathscr}{OT1}{rsfs}{n}{it}
\DeclareFontFamily{OT1}{pzc}{}
\DeclareFontShape{OT1}{pzc}{n}{it}{<->s*[2.2]pzc}{}
\DeclareMathAlphabet{\mathpzc}{OT1}{pzc}{b}{sl}
\newcommand{\rmnum}[1]{\romannumeral #1}
\newcommand{\Rmnum}[1]{\expandafter\@slowromancap\romannumeral #1@}
\DeclareMathOperator{\Frac}{Frac}
\DeclareMathOperator{\ord}{ord}
\DeclareMathOperator{\Gal}{Gal}
\DeclareMathOperator{\Aut}{Aut}
\DeclareMathOperator{\Fr}{Fr}
\DeclareMathOperator{\sep}{sep}
\DeclareMathOperator{\im}{im}
\DeclareMathOperator{\Nm}{Nm}
\renewcommand*{\c}{\ensuremath{\mathbf{C}}}              
\newcommand*{\Z}{\ensuremath{\mathbf{Z}}}               
\newcommand*{\Q}{\ensuremath{\mathbf{Q}}}                           
\newcommand*{\N}{\ensuremath{\mathbf{N}}}
\newcommand*{\Kbar}{\overline{K}}    
\newcommand*{\Gm}{\ensuremath{{\mathbf{G}_m}}}
\newcommand*{\m}{\mathfrak{M}}
\newcommand*{\s}{\mathfrak{S}}
\newcommand*{\C}{\mathbf{C}}
\newcommand*{\E}{\mathscr{E}}     
\newcommand*{\F}{\mathbf{F}}
\let\danishO\O
\let\O\tempO
\renewcommand*{\b}{\ensuremath{\mathfrak{b}}}
\newcommand*{\mfa}{\ensuremath{\mathfrak{a}}}
\renewcommand*{\int}{\ensuremath{\mathrm{int}}}
\newcommand*{\e}{\ensuremath{\mathbf{E}}}
\renewcommand*{\a}{\ensuremath{\mathbf{A}}}
\renewcommand*{\u}[1]{\underline{#1}}
\newcommand*{\wh}[1]{\widehat{#1}}
\newcommand*{\wt}[1]{\widetilde{#1}}
\numberwithin{equation}{section}
\theoremstyle{plain}
  \newtheorem{theorem}[equation]{Theorem}
  \newtheorem{proposition}[equation]{Proposition}
  \newtheorem{lemma}[equation]{Lemma}
  \newtheorem{corollary}[equation]{Corollary}
\theoremstyle{definition}
  \newtheorem{definition}[equation]{Definition}
  \newtheorem{notation}[equation]{Notation}
\theoremstyle{remark}
  \newtheorem{example}[equation]{Example}
  \newtheorem{remark}[equation]{Remark}
\newcommand{\chris}[1]{{\color{blue} \sf $\spadesuit\spadesuit\spadesuit$ Chris: [#1]}}
\newcommand{\bryden}[1]{{\color{red} \sf $\heartsuit\heartsuit\heartsuit$ Bryden: [#1]}}
\newcommand{\todo}[1]{\vspace{5 mm}\par \noindent
\marginpar{\textsc{ToDo}}
\framebox{\begin{minipage}[c]{0.95 \textwidth}
\tt #1 \end{minipage}}\vspace{5 mm}\par}
\newcommand{\ALK}{\a_{L/K}^+}
\begin{document}
\title{Canonical Cohen rings for norm fields}

\author{Bryden Cais}
\address{University of Arizona, Tucson}
\curraddr{Department of Mathematics, 617 N. Santa Rita Ave., Tucson AZ. 85721}
\email{cais@math.arizona.edu}

\author{Christopher Davis}
\address{University of Copenhagen, Denmark}
\curraddr{Department of Mathematical Sciences,  Universitetsparken~5, 
DK-2100 K{\o}benhavn \danishO}
\email{davis@math.ku.dk}



\subjclass[2010]{Primary: 11S20  Secondary: 13F35, 11S82}
\keywords{Norm fields, generalized Witt vectors, $p$-adic Hodge theory.}
\date{\today}

\begin{abstract}
Fix $K/\Q_p$ a finite extension and let $L/K$ be an infinite, strictly APF extension in the sense of 
Fontaine--Wintenberger \cite{Wintenberger}.  Let $X_K(L)$ denote its associated \emph{norm field}.  The goal of this paper is to associate to $L/K$, in a canonical and functorial way, a $p$-adically complete subring $\a_{L/K}^+ \subset \widetilde{\a}^+$ whose reduction modulo~$p$ is contained in the valuation ring of $X_K(L)$.  When the extension $L/K$ is of a special form, which we call a \emph{$\varphi$-iterate extension}, we prove that $X_K(L)$ is (at worst) a finite purely inseparable extension of $\Frac (\a_{L/K}^+/p\a_{L/K}^+)$.  The class of $\varphi$-iterate extensions includes all Lubin--Tate extensions, as well as many other
extensions such as the non-Galois ``Kummer" extension occurring in work of Faltings, Breuil, and Kisin.
In particular, our work provides a canonical and functorial construction of
every characterietic zero lift of the norm fields that have thus far played a foundational role in (integral) $p$-adic Hodge theory, as well as many other cases which have yet to be studied.
\end{abstract}

\maketitle

\section{Introduction}

Let $K$ be a finite extension of $\Q_p$ with residue field $k_K$, fix an algebraic closure
$\Kbar$ of $K$, and let $G_K:=\Gal(\Kbar/K)$ be the absolute Galois group of $K$.
A natural approach to the classification of $p$-adic representations of $G_K$
is to first study their restrictions 
to subgroups $G_L$ of $G_K$ for $L/K$ an infinite extension.  When $L/K$ is {\em arithmetically profinite}
(APF) in the sense that the higher ramification subgroups $G_K^v G_L$ are {\em open} in $G_K$,
a miracle occurs: by the field of norms machinery of Fontaine--Wintenberger \cite{Wintenberger}, 
there is a canonical isomorphism of Galois groups 
\begin{equation}
	\Gal(\Kbar/L)\simeq \Gal(X_K(L)^{\sep}/X_K(L))\label{CatEquiv},
\end{equation} 
where $X_K(L)$ is the {\em field of norms} of $L/K$, which is a discretely valued field of equicharacteristic 
$p$, noncanonically isomorphic to $k_L(\!(u)\!)$, where $k_L$ is the residue field of $L$. 
As a consequence of (\ref{CatEquiv}), Fontaine proved \cite{Fontaine90}
that $G_L$-representations on $\F_p$-vector spaces are classified by \'etale $\varphi$-modules over $X_K(L)$.
In order to lift this to a classification of $G_L$-representations on $\Z_p$-modules, it is necessary to lift the norm field $X_K(L)$ together with its Frobenius endomorphism to characteristic zero.  Provided that this can be done,
one then aims to recover the entire $G_K$-representation via appropriate descent data from $G_L$ to $G_K$.
This strategy has successfully been carried out in three important cases:
\begin{enumerate}
	\item For $L=K(\mu_{p^{\infty}})$, Fontaine's work \cite{Fontaine90} provides
	the necessary lift of $X_K(L)$, and results in the theory 
	of $(\varphi,\Gamma)$-modules which has played a pivotal role in the development
	and major applications of $p$-adic Hodge theory.
	This lift of the norm field is also essential in Berger's theory of Wach modules,
	which provides a classification of (certain) crystalline representations
	of $G_K$ when $K$ is an abelian extension of an absolutely unramified
	extension of $\Q_p$ \cite{Berger}, \cite{BergerBreuil}.\label{phiG}  
	
	\item More recently, the natural generalization
	of Fontaine's theory in which the cyclotomic extension of $K$ is replaced by an arbitrary Lubin--Tate extension
	determined by a Lubin--Tate formal group over a subfield $F$ of $K$ has been studied
	by several authors \cite{Fourquaux}, \cite{Chiarellotto}, \cite{KisinRen}.  As in (\ref{phiG})
	(which is the special case $F=W(k_K)$ with formal group $\wh{\Gm}$), the underlying
	formal group plays an essential role in constructing a characteristic zero 
	lift of the norm field $X_K(L)$ that has enough structure to encode the desired descent 
	data from $G_L$ to $G_K$.
	
	\item For $L=K(\pi_0^{1/p^{\infty}})$ the (non-Galois) ``Kummer" extension obtained by 
	adjoining a compatible system of $p$-power roots of a uniformizer $\pi_0$ in $K$, 
	the resulting theory of $\varphi$-modules (due to Fontaine) 
	has been extensively exploited by Faltings \cite{Faltings}, Breuil \cite{BreuilNormes} \cite{BreuilIntegral}, and Kisin \cite{KisinCrystal} 
	in their development of integral $p$-adic Hodge theory.
	In this case, the ramification structure of $L/K$ is such that for {\em crystalline} $G_K$-representations,
	there is a {\em unique} descent datum from $G_L$ to $G_K$ so that one obtains a classification of
	these representations by $\varphi$-modules alone (without any $\Gamma!$).\label{KisinExt}
\end{enumerate}

In each of these special cases, the required lift of the norm field to characteristic zero is constructed
in a rather {\em ad hoc} manner, and it is not in general clear that it is independent of the choices
made to construct it.

In this paper, we propose a candidate for a {\em canonical} and {\em functorial} lift to characteristic zero 
of the valuation ring $A_K(L)$ of the norm field $X_K(L)$.  In order to encompass the aforementioned special cases, we work in
slightly greater generality: let $F$ be a subfield of $K$
with residue field $k_F\subseteq k_K$ of cardinality $q:=|k_F|$, choose a uniformizer $\pi$ of $F$,
and for any $W(k_F)$-algebra $A$, write $A_F:=A\otimes_{W(k_F)} \O_F$.

\begin{theorem}\label{MT}
	Let $L/K$ be an arbitrary strictly\footnote{The strictness hypothesis guarantees
that $A_K(L)$ is naturally a subring of Fontaine's ring $\wt{\e}^+:=\varprojlim_{x\mapsto x^p} \O_{\Kbar}/(p)$.} APF extension and let $k_L$ denote the residue field of $L$.  There is a canonical $W(k_L)_F$-subalgebra
	$\a_{L/K}^+$ of $\wt{\a}^+_F:=W(\wt{\e}^+)_F$, depending only on 
	$L/K$ and the choices $F$, $\pi$, such that:
	\begin{enumerate}
	 	\item $\a_{L/K}^+$ is $\pi$-adically complete and separated, and
		closed for the weak topology on $\wt{\a}^+_F$.\label{MT1}
		
		\item $\a_{L/K}^+$ is stable under the $q$-power Frobenius automorphism of $\wt{\a}_F^+$.  	 
		\item Via the inclusion $\a_{L/K}^+\hookrightarrow \wt{\a}_F^+$, the residue ring
		$\a_{L/K}^+/\pi \a_{L/K}^+$ of $\a_{L/K}^+$ is naturally
		a $k_L$-subalgebra of $A_K(L)$.\label{MT3}
		
		\item The ring $\a_{L/K}^+$ is functorial in $L/K$: for any strictly APF extension $L'/K$
		and any $K$-embedding $\tau:L\hookrightarrow L'$ with $L'/\tau(L)$ finite and of wild ramification degree
		a power\footnote{This restriction is vacuous if $q=p$, {\em i.e.} if $F/\Q_p$
		is totally ramified} of $q=|k_F|$, there is associated a canonical embedding 
		$\a_{L/K}^+\hookrightarrow \a_{L'/K}^+$
		whose reduction modulo $\pi$ is the restriction of the map on norm fields induced by $\tau$ \cite[3.1.1]	
		{Wintenberger}; in particular, $\a_{L/K}^+$ is naturally equipped with an action of $\Aut(L/K)$.\label{MT4}
	\end{enumerate}
\end{theorem}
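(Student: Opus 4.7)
The plan is to construct $\a_{L/K}^+$ by exhibiting a canonical family of generators inside $\wt{\a}^+_F$ whose reductions modulo $\pi$ sit inside $A_K(L) \subset \wt{\e}^+$. Because $L/K$ is strictly APF, Fontaine--Wintenberger theory provides a distinguished tower of finite subextensions $K = K_0 \subsetneq K_1 \subsetneq \cdots \subset L$ whose ramification jumps are controlled by the ramification data of $L/K$; from such a tower one extracts canonical norm-compatible families of uniformizers, yielding distinguished elements of $A_K(L) \subset \wt{\e}^+$. We then define $\a_{L/K}^+$ to be the closure, in the weak topology of $\wt{\a}^+_F$, of the $W(k_L)_F$-subalgebra of $\wt{\a}^+_F$ generated by the Teichm\"uller lifts of these distinguished elements, together with their $\varphi^n$-images for $n \in \Z$ whenever such images remain in $\wt{\a}^+_F$. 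Canonicity would then be proved by checking that different choices of towers and uniformizers produce the same topological subring.

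Properties (1)--(3) should follow with moderate effort. For (1), $\pi$-adic completeness and separation are automatic because $\a_{L/K}^+$ is a closed subring of the $\pi$-adically complete and Hausdorff ring $\wt{\a}^+_F$, while weak-topology closedness is built into the definition. For (2), $\varphi$-stability holds because Teichm\"uller lifts satisfy $\varphi([x]) = [x^q]$ and the generating set has been arranged to be $\varphi$-stable (the underlying elements in $\wt{\e}^+$ being $\varphi$-compatible by construction). For (3), the reduction map sends each Teichm\"uller lift into $A_K(L)$; since $A_K(L)$ is $\pi$-adically closed in $\wt{\e}^+$ and the reduction map is continuous, the image of $\a_{L/K}^+$ lies in $A_K(L)$.

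The main obstacle is property~(4), the functoriality. Given $\tau : L \hookrightarrow L'$ with $L'/\tau(L)$ finite and of wild ramification degree a power of $q$, the field-of-norms functor provides an inclusion $X_K(L) \hookrightarrow X_K(L')$ of norm fields, but lifting this to an embedding $\a_{L/K}^+ \hookrightarrow \a_{L'/K}^+$ demands that the canonical generators on the two sides be related explicitly. The wild ramification hypothesis is essential here: it should ensure that the distinguished uniformizer system for $L/K$, after transport via $\tau$, lands (up to a bounded power of Frobenius) inside the subalgebra generated by the distinguished system for $L'/K$, and combined with property~(2) this should yield the required embedding. A closely related subtlety that must be handled is the independence of $\a_{L/K}^+$ from the choices of tower and uniformizers: since the weak topology is strictly finer than the $\pi$-adic one, ordinary approximation arguments need to be refined, and one must check that rescaling a uniformizer by a unit only perturbs the corresponding generator within the weak-topology closure of the original subring.
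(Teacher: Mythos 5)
Your proposal diverges from the paper's construction in a fundamental way, and the point of divergence is exactly where the argument breaks down. You attempt to build $\a_{L/K}^+$ by \emph{generation}: choose a tower, extract ``canonical norm-compatible families of uniformizers,'' take the $W(k_L)_F$-subalgebra generated by their Teichm\"uller lifts, and close it up. But there is no canonical norm-compatible family of uniformizers attached to a strictly APF extension --- uniformizers are defined only up to units, and this is precisely the noncanonical choice that the theorem forbids. Your closing remark that ``rescaling a uniformizer by a unit only perturbs the corresponding generator within the weak-topology closure'' is the crux of the matter, and it is asserted without proof; it is in fact false in general. For a unit $a\in A_K(L)^\times\setminus k_L$ one has $[a\bar u]=[a][\bar u]$ in $\wt{\a}^+_F$, and $[a]$ need not lie in the weak closure of the $W(k_L)_F$-algebra generated by $[\bar u]$ --- indeed the whole point of the paper is that $\a_{L/K}^+/\pi$ can be as small as $k_L$, so most units of $A_K(L)$ have no Teichm\"uller lift in the ring. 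A second problem is that Teichm\"uller lifts are $\varphi^{-1}$-stable as elements of the perfect ring $\wt{\a}^+_F$ (one has $\varphi^{-1}([\bar u])=[\bar u^{1/q}]$), so the hedge ``whenever such images remain in $\wt{\a}^+_F$'' is vacuous and including all $\varphi^n$-images for $n\in\Z$ would make the mod-$\pi$ reduction perfect, violating property~(3) since $A_K(L)$ is imperfect.

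The paper sidesteps all of this by defining $\a_{L/K}^+$ not by generators but by a cut-out condition. Using the Davis--Kedlaya identification $\wt{\a}^+_F\cong\varprojlim_{\Fr}W_\pi(\O_{\wh L})$ (Propositions~\ref{prop: Fontaine} and \ref{perfect lift}), one takes the subring of Frobenius-compatible sequences $(\underline x_{q^i})_i$ such that $\underline x_{q^j}\in W_\pi(\O_{K_m})$ whenever $q^j\mid[K_m:K_1]$, where $\{K_m\}$ is Wintenberger's tower of elementary extensions; this is intrinsic because the $K_m$ are fixed fields of the groups $G_K^{b_m}G_L$ and no uniformizer is ever chosen. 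Properties (1)--(3) are then checked directly (Propositions~\ref{ALK is weakly closed}, \ref{image of beta}, \ref{prop: W(k)-algebra etc}), and the delicate part --- exactly the one you flagged --- is functoriality (4), proved in Section~\ref{Sec:functorial} by comparing the elementary towers of $L/K$ and $L'/K$ (Lemmas~\ref{intersection is elementary}--\ref{n not nec large}) and showing that the wild ramification hypothesis makes the evident ``shift'' map of Frobenius towers land in $\a_{L'/K}^+$. If you want to salvage a generator-based approach you would need to restrict to $\varphi$-iterate extensions, where the paper does construct an explicit lift $\iota_{\underline\pi}$ of a power of a uniformizer (Section~\ref{Sec:lift}); even then the canonicity is obtained by identifying the image with the intrinsically defined $\a_{L/K}^+$, not by showing directly that the image is independent of $\underline\pi$.
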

	
For general strictly APF extensions $L/K$, the canonical
homomorphism  $W(k_L)_F\rightarrow \a_{L/K}^+$ is often an {\em isomorphism},
and our candidate ring $\ALK$ is too small;
following recent work of Berger \cite{BergerLift},
we prove that this happens, for example, whenever $L/K$ is Galois with group that is 
a $p$-adic Lie group admitting no abelian quotient by a finite subgroup 
(see Remark \ref{pessimism}).
Nevertheless, we isolate a large class of (not necessarily Galois) strictly APF extensions $L/K$---containing each of the extensions (\ref{phiG})--(\ref{KisinExt}) above
as special cases---for which we are able to prove that $\a_{L/K}^+$
provides the desired functorial lift of the valuation ring in the norm field of $L/K$:

\begin{definition} \label{iterated def}
	With notation above, let $\pi_0$ be any choice of uniformizer of $K$, and
	let $\varphi(x) \in \O_F[\![x]\!]$ be any power series satisfying 
	$\varphi(0) = 0$ and $\varphi(x) \equiv x^q \bmod \pi \O_F$.  
	Beginning with $\pi_0 \in \O_K$, recursively choose $\pi_{i+1} \in \O_{\Kbar}$ 
	 a root of $\varphi(x) - \pi_i$, and let $L := \cup_i K(\pi_i)$.  
	We call a strictly APF extension $L/K$ of this form\footnote{Together with Jonathan Lubin,
	the authors expect to show that the strictly APF condition is automatically satisfied 
	by every extension of this type.
	} a \emph{$\varphi$-iterate extension}.  
\end{definition}

For any $\varphi$-iterate extension $L/K$, we prove that our candidate ring $\a_{L/K}^+$
lifts the valuation ring $A_{K}(L)$ of the norm field of $L/K$, up to possibly 
a finite, purely inseparable extension (which for applications to $p$-adic Hodge theory
is sufficient; see {\em e.g.} \cite[3.4.4 (a)]{Fontaine90}):

\begin{theorem}\label{MT-iterate}
	Let $L/K$ be a $\varphi$-iterate extension.  There exists $u\in \a_{L/K}^+$
	whose reduction modulo $\pi$ is the $q^d$-th power of a uniformizer in $A_K(L)$,
	for some integer $d\ge 0$.  In particular, $\a_{L/K}^+\simeq W(k_L)_F[\![u]\!]$,
	and the norm field $X_K(L)$ is a finite, purely inseparable extension of 
	$\Frac(\a_{L/K}^+/\pi\a_{L/K}^+)$.
\end{theorem}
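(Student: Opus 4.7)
The plan is to construct an explicit element $u\in \a_{L/K}^+$ from the $\varphi$-iterate data $(\varphi,\pi_0)$, verify that its reduction modulo $\pi$ is the $q^d$-th power of a uniformizer of the norm field $X_K(L)$, and then use $\pi$-adic completeness of $\a_{L/K}^+$ to upgrade this to the isomorphism $\a_{L/K}^+ \simeq W(k_L)_F[\![u]\!]$. From Definition \ref{iterated def} the sequence $(\pi_i)_{i\ge 0}$ satisfies $\pi_{i+1}^q \equiv \pi_i \pmod{\pi\O_{\Kbar}}$, so the tuple $\underline{\pi}:=(\bar\pi_i)_{i\ge 0}$ defines an element of $\wt{\e}^+$ (using the fact that $\wt{\e}^+ = \varprojlim_{y\mapsto y^q}\O_{\C_p}/\pi$ as well). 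Since $\pi_{i+1}$ is a root of the ``generalized Eisenstein'' series $\varphi(x)-\pi_i$ with constant term $-\pi_i$, a standard Fontaine--Wintenberger computation of norms in the tower identifies $\underline{\pi}$ with (a unit multiple of) a uniformizer of the valuation ring $A_K(L)\subset \wt{\e}^+$.

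To produce the characteristic-zero lift, I would construct $v\in \wt{\a}^+_F$ as a topologically nilpotent solution of the functional equation $F_\varphi(v)=\varphi(v)$ lifting $\underline{\pi}$ modulo $\pi$, where $F_\varphi$ is the $q$-power Frobenius on $\wt{\a}^+_F$ (bijective since $\wt{\e}^+$ is perfect) and $\varphi$ is evaluated as a power series. Such $v$ is produced by the iteration $v_0:=[\underline{\pi}]$, $v_{n+1}:=F_\varphi^{-1}(\varphi(v_n))$, which converges in the weak topology on $\wt{\a}^+_F$: the congruence $\varphi(y)\equiv y^q\pmod\pi$ yields $v_{n+1}\equiv v_n\pmod\pi$, and the estimate $\varphi'(x)\in\pi\O_F[\![x]\!]$ (valid since $q\in\pi\O_F$) provides the contraction needed for convergence. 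The element required by the theorem is then $u := F_\varphi^d(v) = \varphi^{\circ d}(v)$ for the smallest $d\ge 0$ with $u\in \a_{L/K}^+$; membership in the canonical subring should follow from the Frobenius compatibility of $u$ with the tower $L/K$ and the intrinsic characterization of $\a_{L/K}^+$ established earlier in the paper, together with Theorem \ref{MT}(\ref{MT1}) and (\ref{MT4}). By construction, $\bar u = \underline{\pi}^{q^d}$ is the $q^d$-th power of a uniformizer of $A_K(L)$.

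With such $u\in\a_{L/K}^+$ in hand, the ring map $\Theta\colon W(k_L)_F[\![x]\!]\to\a_{L/K}^+$ sending $x\mapsto u$ is well-defined by $\pi$-adic completeness of $\a_{L/K}^+$ (Theorem \ref{MT}(\ref{MT1})), and injective because $\bar u$ is transcendental over $k_L$. Modulo $\pi$, the image of $\Theta$ is the $\pi$-adically closed subring $k_L[\![\bar\varpi^{q^d}]\!]$ of the DVR $A_K(L) = k_L[\![\bar\varpi]\!]$, and the inclusion $\a_{L/K}^+/\pi\hookrightarrow A_K(L)$ of Theorem \ref{MT}(\ref{MT3}) combined with a short valuation argument forces $\a_{L/K}^+/\pi = k_L[\![\bar u]\!]$; a Nakayama-type lifting exploiting $\pi$-adic completeness on both sides promotes $\Theta$ to an isomorphism. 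The final assertion is immediate, as $X_K(L) = k_L(\!(\bar\varpi)\!)$ is purely inseparable of degree $q^d$ over $k_L(\!(\bar\varpi^{q^d})\!) = \Frac(\a_{L/K}^+/\pi\a_{L/K}^+)$. The main obstacle is verifying that the fixed point $v$, or its Frobenius iterate $\varphi^{\circ d}(v)$, genuinely lies in the canonical subring $\a_{L/K}^+$ and not merely in the ambient $\wt{\a}^+_F$: matching the weak-topology fixed-point construction against the intrinsic definition of $\a_{L/K}^+$ is the technical heart of the proof, and the exponent $q^d$ in the statement records precisely how many Frobenius applications are needed to effect this match.
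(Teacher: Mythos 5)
Your identification of the functional equation $\Fr(v)=\varphi(v)$ as the structural property characterizing the sought lift is correct, and it is essentially equivalent to what the paper does; the paper, however, constructs the solution directly via the Dwork-lemma section $\lambda_\varphi\colon \O_F[\![x]\!]\to W_\pi(\O_F[\![x]\!])$ of Proposition~\ref{lambda prop}, evaluated at the $\pi_n$ (Proposition~\ref{beta uniformizer}), rather than by a fixed-point iteration. That direct construction has the advantage that convergence is automatic (the ghost components of $\lambda_\varphi(x)$ are simply $(x,\varphi(x),\varphi^2(x),\ldots)$ by Corollary~\ref{ghost comps lambda}), whereas your iteration $v_{n+1}:=\Fr^{-1}(\varphi(v_n))$ would need a genuine contraction estimate in the weak topology on $\wt{\a}^+_F$, which is delicate and not obviously available in general.

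The serious gap, which you yourself flag as ``the technical heart,'' is precisely the part you do not address: showing that $\varphi^{\circ d}(v)$ actually lies in the canonical subring $\a_{L/K}^+$ for \emph{some} finite $d$. The definition of $\a_{L/K}^+$ (Definition~\ref{new a def}) imposes that $\underline{x}_{q^j}\in W_\pi(\O_{K_m})$ whenever $q^j\mid[K_m:K_1]$, where $\{K_m\}$ is Wintenberger's tower of \emph{elementary} extensions; but your element is built from the tower $\{K(\pi_i)\}$, and the relationship between these two towers is not immediate. In the paper this relationship is the content of the entire ramification-theoretic block from Lemma~\ref{lower numbering lemma} through Corollary~\ref{pi and elementary subfields}: one needs two-sided bounds $Aq^i\le\ord_{\pi_i}(\sigma(\pi_i)-\pi_i)\le Bq^i$, transfer of these to upper-numbering ramification groups via the transition functions $\psi$ (Lemmas~\ref{psi is identity}--\ref{psi corollary}), placement of $K(\pi_i)$ relative to the fixed fields $F_i$ of $G_K^{\phi_{L/K}(Dq^i)}G_L$ (Proposition~\ref{pi in fixed fields}), a pigeonhole argument (Lemma~\ref{no infinite sequence of indices}) using strict APF-ness via $(\ref{hypothesis elementary})$, and finally the conclusion that $q^m\mid[K_{i+j}:K_i]$ forces $\pi_{i_0+m}\in K_{i+j}$ (Corollary~\ref{pi and elementary subfields}). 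Without this analysis it is not even clear that a finite $d$ exists, so ``for the smallest $d\ge 0$ with $u\in\a_{L/K}^+$'' is circular. Your sketch of the endgame (Nakayama-type lifting from the residue ring, surjectivity mod $\pi$, $\pi$-adic completeness) is consistent with Corollary~\ref{kstrict} and Proposition~\ref{residue ring}, but the middle of the argument needs to be filled in.
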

 
Our work thus provides a {\em unified} and {\em canonical} construction of every characteristic zero lift of the norm fields
that have thus far played a foundational role in (integral) $p$-adic Hodge theory, and raises the possibility
of a general theory for arbitrary strictly APF extensions. 

Throughout this article, we keep the following conventions and notations:
\begin{notation} \label{K notation}
We fix a finite extension $F$ of $\Q_p$ and a choice $\pi$ of uniformizer of $\O_F$, and we denote
by $\mathfrak{m}_F$ and $k_F$ the maximal ideal and residue field of $\O_F$, respectively.
We write $p^a=q:=|k_F|$, and for any $W(k_F)$-algebra $A$ we set $A_F:=A\otimes_{W(k_F)} \O_F$.
When we consider a strictly APF extension $L/K$, the bottom field $K$ is assumed to contain $F$.  We let $K_0$ and $K_1$ denote, respectively, the maximal unramified and tamely ramified subextensions of $L/K$.  We let $k$ denote the residue field of $L$ (or equivalently, of $K_0$).  
Our construction of $\a_{L/K}^+$ will depend implicitly on our choices $F$ and $\pi$, though we suppress
this in our notation.  
\end{notation}

\subsection*{Acknowledgments} The authors are very grateful to Laurent Berger, Jim Borger, Lars Hesselholt, Kiran Kedlaya,
Abhinav Kumar, Ruochuan Liu, Jonathan Lubin, and Anders Thorup for many helpful discussions.   
The first author is supported by an NSA ``Young Investigator" grant (H98230-12-1-0238).
The second author is partially supported by the Danish National Research Foundation through the Centre for Symmetry and Deformation (DNRF92).

\section{Generalized Witt vectors}

Let $F, q, \pi, \varphi(x)$ be defined as in Notation~\ref{K notation} and Definition~\ref{iterated def}.  Our construction of $\a_{L/K}^+$ is inspired by the alternative description 
$\wt{\a}^+ \cong \varprojlim_{\Fr} W(\O_{\c_F})$ due to the second author and Kedlaya \cite{DavisKedlaya} (see also Proposition~\ref{prop: Fontaine} below),
in which the inverse limit is taken along the Witt vector Frobenius map.  To account for ramification and to account for $q$-powers instead of $p$-powers, we use generalized Witt vectors.

The power series $\varphi$ determines a unique continuous $\O_F$-algebra homomorphism
\[
\O_F[\![ x]\!] \rightarrow \O_F[\![ x]\!], \qquad x \mapsto \varphi(x).
\]
In the special case that $F = \Q_p$, meaning $q = p$ and we can take $\pi = p$, then such a ring homomorphism determines a unique ring homomorphism
\[
\lambda_{\varphi}: \O_F[\![ x]\!] \rightarrow W(\O_F[\![ x]\!])
\]
which is a section to the projection map $W(\O_F[\![ x]\!]) \rightarrow \O_F[\![ x]\!]$ and which satisfies
\[
\lambda_{\varphi} \circ \varphi = \Fr \circ \lambda_{\varphi},
\]
where $\Fr$ is the Witt vector Frobenius.  In fact, a completely analogous result holds for the general fields $F/\Q_p$ considered in this paper; see Proposition~\ref{lambda prop}.  For this result, we will require a generalization of the classical $p$-typical Witt vectors.  These generalized Witt vectors have appeared in the work of Drinfeld \cite[\S1]{Dri76} and Hazewinkel \cite[(18.6.13)]{Haz78}.  Our exposition follows that of Borger\footnote{Borger has indicated to the authors that 
the constructions and arguments of his paper work equally well under the more general hypotheses
$\varphi(x)\equiv x^{q^j}\bmod \pi$ for any $j\ge 1$ and $\varphi(0)=0$; we stick to the present assumptions for simplicity.} in \cite{BorgerGeometry}.  The goal of this section is to recall the properties of these generalized Witt vectors.  

Define functors $\bullet^{\N}$ from sets to sets, from $\O_F$-algebras to sets, and from $\O_F$-algebras to $\O_F$-algebras in the obvious way.  

\begin{definition} \label{Witt variant} 
Let $W_{\pi}( \bullet )$ denote the unique functor from $\O_F$-algebras to $\O_F$-algebras satisfying the following two properties:
\begin{itemize}
\item The induced functor from $\O_F$-algebras to sets agrees with $\bullet^{\N}$.  
\item The ghost map $W_{\pi}(\bullet) \rightarrow \bullet^{\N}$ given by
\[
(a_0, a_1, a_2, \ldots) \mapsto (a_0, a_0^q + \pi a_1, a_0^{q^2} + \pi a_1^q + \pi^2 a_2, \ldots)
\] 
is a natural transformation of functors.
\end{itemize}
\end{definition}

\begin{remark}
Perhaps the notation $W_{\pi, q}$ would be more precise.  We hope the notation $W_{\pi}$ is sufficient to remind the reader that these are not the usual $p$-typical Witt vectors.
\end{remark}

This defines the ring operations on $W_{\pi}(R)$ for any $\O_F$-algebra $R$ in the usual way:  If $R$ is $\pi$-torsion free, then the ghost map is injective, and so the ring operations are uniquely determined by the requirement that the ghost map be a ring homomorphism.  In the general case, we take a surjection from a $\pi$-torsion free ring $S$ to $R$; we then use the ring operations on $S$ to define the ring operations on $R$.  

\begin{proposition}
The functor $W_{\pi}(\bullet)$ defined above exists.  In particular, for any $\O_F$-algebra $R$, $W_{\pi}(R)$ is an $\O_F$-algebra.  
\end{proposition}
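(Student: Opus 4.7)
The plan is to mimic the classical construction of $p$-typical Witt vectors, replacing $p$ by $\pi$ and $x^p$ by $x^q$. First, I would fix the terminology: for an $\O_F$-algebra $R$, write $w\colon R^{\N}\to R^{\N}$ for the proposed ghost map and $w_n$ for its $n$-th component. The key observation is that when $R$ is $\pi$-torsion free, $w$ is injective, because from $w_n = a_0^{q^n} + \pi a_1^{q^{n-1}} + \cdots + \pi^n a_n$ one can solve for $a_n$ recursively (dividing by $\pi^n$). Thus, if a ring structure on $W_\pi(R) = R^{\N}$ making $w$ a homomorphism to the componentwise ring $R^{\N}$ exists, it is unique.

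The heart of the argument is then to produce universal polynomials $S_n, M_n \in \O_F[x_0,\ldots,x_n,y_0,\ldots,y_n]$ (with $\O_F$-integral, not merely $F$-valued, coefficients) computing addition and multiplication. I would do this by the universal example: work over the polynomial ring $\O_F[x_0,x_1,\ldots,y_0,y_1,\ldots]$, which is $\pi$-torsion free, and let $S_n, M_n$ be the unique polynomials in $F[x_i,y_j]$ satisfying $w_n(S_*) = w_n(x_*) + w_n(y_*)$ and similarly for $M_n$. The task is to verify $S_n, M_n \in \O_F[x_i,y_j]$. For this, I would prove a Dwork-style lemma: a sequence $(b_0, b_1, \ldots)$ lies in the image of $w$ on a $\pi$-torsion free $\O_F$-algebra $S$ equipped with an endomorphism $\sigma\colon S\to S$ lifting the $q$-power Frobenius on $S/\pi S$ if and only if
\[
b_n \equiv \sigma(b_{n-1}) \pmod{\pi^n S} \quad \text{for all } n\ge 1.
\]
Applied to $S=\O_F[x_i,y_j]$ with $\sigma$ the $\O_F$-endomorphism $x_i\mapsto x_i^q$, $y_j\mapsto y_j^q$, the sum and product of ghost sequences obviously satisfy this congruence (since $\sigma$ commutes with the ring operations and reduces to $q$-power Frobenius), so the required preimages exist in $S^{\N}$ and the universal polynomials are $\O_F$-integral.

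The main obstacle is this Dwork-type congruence, and in particular checking that the induction goes through with $\pi$ in place of $p$ and exponent $q$ in place of $p$; the combinatorial identity needed is $\sigma(b_{n-1})\equiv b_{n-1}^q \pmod{\pi}$ combined with a careful analysis of $\pi$-divisibility of the expressions $\sum \pi^i a_i^{q^{n-i}}$. Once established, the lemma provides $S_n, M_n$ with coefficients in $\O_F$, and uniqueness of these polynomials (from ghost-map injectivity over the universal ring) guarantees that they satisfy the ring axioms universally.

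Finally, I would bootstrap to arbitrary $\O_F$-algebras $R$ by writing $R = T/I$ for a $\pi$-torsion free $\O_F$-algebra $T$ (e.g., the polynomial ring on a set of generators). The componentwise ideal $I^{\N}\subset T^{\N}=W_\pi(T)$ is preserved by the universal polynomials $S_n, M_n$ (since $S_n, M_n$ reduce to polynomials in the $x_i, y_j$ with coefficients in $\O_F$ and each $I$-coordinate can be varied independently), so we obtain a well-defined ring structure on $W_\pi(R) := R^{\N} = T^{\N}/I^{\N}$ making the ghost map (when it makes sense, i.e.\ modulo $\pi$-torsion) a ring homomorphism. Functoriality in $R$ is then immediate, as the same universal polynomials $S_n, M_n$ define the ring operations on every $W_\pi(R)$, completing the construction.
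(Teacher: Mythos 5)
Your proposal is correct, but it takes a genuinely different route from the paper. The paper's proof of this proposition is a bare citation: it appeals to Borger's abstract construction in \cite{BorgerGeometry} for existence and to \cite[\S 3.1]{BorgerGeometry} for agreement with the ghost-map description given in Definition~\ref{Witt variant}. You instead carry out the classical Witt-vector construction directly: ghost injectivity over $\pi$-torsion free rings, a Dwork-type congruence criterion for membership in the image of the ghost map (applied to the universal $\pi$-torsion free ring $\O_F[x_i,y_j]$ with the endomorphism $x_i\mapsto x_i^q$, $y_j\mapsto y_j^q$ lifting $q$-Frobenius, using that $c^q\equiv c\bmod\pi$ for $c\in\O_F$), and then descent to an arbitrary $\O_F$-algebra $R$ via a presentation $R=T/I$. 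This is a valid and self-contained proof, and it is in the spirit of the rest of the paper: the ingredients you need (the Dwork lemma, Proposition~\ref{dwork}, and the observation that $q$-th powers improve $\pi$-adic congruences, Lemma~\ref{p-powers and valuations}) are exactly the tools the paper develops a few lines later for other purposes. What the citation route buys is brevity and an assurance that $W_\pi$ fits into Borger's broader framework of $\Lambda$-structures; what your direct route buys is independence from the external reference and a concrete handle on the structure polynomials $S_n,M_n$, which is closer to what later lemmas of the paper (e.g.\ Lemma~\ref{Frobenius polynomials}) actually use.

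Two small points worth tightening. First, you assert the Dwork criterion as a biconditional; you only use the ``congruence implies preimage'' direction, and it is only that direction that the paper later records as Proposition~\ref{dwork} (the converse is easy but is not needed). Second, the descent step should be phrased not as ``$I^{\N}$ is preserved by $S_n,M_n$'' but as: the operations $S_n,M_n$ are polynomials with coefficients in $\O_F$, hence they define operations on $R^{\N}$ by substitution for any $\O_F$-algebra $R$; these operations satisfy the ring axioms universally because the corresponding polynomial identities hold over $\O_F[x_i,y_j,z_k]$ by ghost injectivity, and functoriality follows since the same polynomials are used for every $R$. Your argument is morally this, but the ``quotient by $I^{\N}$'' phrasing invites the distracting question of whether $I^{\N}$ is an ideal, which is beside the point.
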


\begin{proof}
The functor $W_{\pi}(\bullet)$ is constructed in a more abstract way in \cite{BorgerGeometry}.  For the agreement with our definition, see \cite[\S{3.1}]{BorgerGeometry}.  See also page~235 of {\em loc.~cit.} for a more direct formulation.
\end{proof}

\begin{lemma} \label{p-powers and valuations}
Let $R$ denote an $\O_F$-algebra, and let $a,b \in R$ satisfy $a \equiv b \bmod \pi^i$.  Then $a^q \equiv b^q \bmod \pi^{i+1}$.  In particular, if $a \equiv b \bmod \pi$, then $a^{q^j} \equiv b^{q^j} \bmod \pi^{j+1}$.  
\end{lemma}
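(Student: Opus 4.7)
The plan is a direct binomial expansion. Assuming $i \geq 1$ (the only substantive case, since for $i=0$ the hypothesis is vacuous while the conclusion is not), write $b = a + \pi^i c$ for some $c \in R$, so that
\[
b^q - a^q = \sum_{k=1}^{q} \binom{q}{k}\, a^{q-k}\, \pi^{ik}\, c^k.
\]
The goal reduces to showing that each summand lies in $\pi^{i+1} R$. The top term $\pi^{iq} c^q$ is handled immediately, since $iq \geq i+1$ whenever $i \geq 1$ and $q \geq 2$.

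For the middle terms $1 \leq k \leq q-1$, the key input is the classical divisibility $p \mid \binom{p^a}{k}$, which follows from the identity $k\binom{p^a}{k} = p^a \binom{p^a-1}{k-1}$ together with $v_p(k) < a$. Since $v_\pi(p) = e(F/\Q_p) \geq 1$, this yields $\pi \mid \binom{q}{k}$ in $\O_F$, and combined with the factor $\pi^{ik}$ each middle summand has $\pi$-adic valuation at least $1 + ik \geq i+1$.

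The \emph{in particular} clause then follows by a short induction on $j$: the base case $j=0$ is exactly the hypothesis, and the inductive step applies the first part to the pair $(a^{q^{j-1}}, b^{q^{j-1}})$ with $i = j$ to advance the congruence from mod $\pi^j$ to mod $\pi^{j+1}$. There is no real obstacle here---the entire argument boils down to the binomial valuation estimate $v_\pi\!\left(\binom{q}{k}\right) \geq 1$ for $1 \leq k \leq q-1$, which is classical. The only point worth noting is that the divisibility is being asserted in the general ring $\O_F$ rather than in $\Z$, and it is the ramification bound $v_\pi(p) \geq 1$ that makes the extra factor of $\pi$ available.
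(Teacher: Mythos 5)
Your proof is correct and takes essentially the same route as the paper's: a direct binomial expansion of $b^q = (a+\pi^i x)^q$, using the classical divisibility $p \mid \binom{q}{k}$ for $1\le k\le q-1$ together with $p\in(\pi)$ to absorb the middle terms into $\pi^{i+1}R$, and induction for the second clause. Your explicit remark that the statement requires $i\ge 1$ (else the conclusion does not follow from a vacuous hypothesis) is a fair observation left tacit in the paper, but the substance of the two arguments is identical.
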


\begin{proof}
The second statement follows from the first by induction.  To prove the first statement, we write $b = a + \pi^i x$ and compute
\[
b^q = (a + \pi^i x)^q \equiv a^q + \pi^{qi} x^q \bmod p\pi^i
\]
by the binomial theorem.  Using that $p \in (\pi)$, this completes the proof.
\end{proof}

\begin{proposition}[Generalized Dwork Lemma] \label{dwork}
Let $R$ denote a $\pi$-torsion free $\O_F$-algebra, and let $\phi$ denote an $\O_F$-algebra homomorphism such that
\[
\phi(x) \equiv x^{q} \bmod \pi.
\]  
Let $(y_{j})$ denote a sequence of elements such that
\[
\phi\left(y_{i-1}\right) \equiv y_{i} \bmod \pi^i.
\]
Then the sequence $(y_{j})$ is in the image of the ghost map.  
\end{proposition}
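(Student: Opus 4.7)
The plan is to construct a Witt vector $(a_0, a_1, a_2, \ldots) \in R^{\N}$ component by component, by induction on $n$, so that its ghost components match the given sequence $(y_j)$. The $n$-th ghost component gives the equation
\[
y_n = a_0^{q^n} + \pi a_1^{q^{n-1}} + \cdots + \pi^{n-1} a_{n-1}^q + \pi^n a_n,
\]
so having chosen $a_0, \ldots, a_{n-1}$, defining $a_n$ amounts to dividing $y_n - (a_0^{q^n} + \pi a_1^{q^{n-1}} + \cdots + \pi^{n-1} a_{n-1}^q)$ by $\pi^n$. Since $R$ is $\pi$-torsion free, the only content of the proof is to show $\pi^n$ divides this difference.

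First I would initialize the induction by setting $a_0 := y_0$, which trivially matches the $0$-th ghost coordinate. For the inductive step, assume $a_0, \ldots, a_{n-1}$ have been chosen so that $y_i$ is the $i$-th ghost component of $(a_0, a_1, \ldots)$ for all $i \le n-1$. In particular,
\[
y_{n-1} = a_0^{q^{n-1}} + \pi a_1^{q^{n-2}} + \cdots + \pi^{n-1} a_{n-1}.
\]
Applying the ring homomorphism $\phi$ gives
\[
\phi(y_{n-1}) = \phi(a_0)^{q^{n-1}} + \pi \phi(a_1)^{q^{n-2}} + \cdots + \pi^{n-1} \phi(a_{n-1}).
\]

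The key step is to compare this to the $n$-th putative ghost sum. For each $i = 0, \ldots, n-1$, the hypothesis $\phi(x) \equiv x^q \bmod \pi$ gives $\phi(a_i) \equiv a_i^q \bmod \pi$, and by Lemma~\ref{p-powers and valuations} we deduce
\[
\phi(a_i)^{q^{n-1-i}} \equiv a_i^{q^{n-i}} \bmod \pi^{n-i}.
\]
Multiplying by $\pi^i$ and summing over $i$ yields
\[
\phi(y_{n-1}) \equiv a_0^{q^n} + \pi a_1^{q^{n-1}} + \cdots + \pi^{n-1} a_{n-1}^q \bmod \pi^n.
\]
Combined with the assumption $y_n \equiv \phi(y_{n-1}) \bmod \pi^n$, this forces $y_n - (a_0^{q^n} + \pi a_1^{q^{n-1}} + \cdots + \pi^{n-1} a_{n-1}^q)$ to lie in $\pi^n R$, so by $\pi$-torsion freeness of $R$ there is a unique $a_n \in R$ producing the desired identity; this closes the induction.

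The main obstacle is getting the exponent bookkeeping correct in the key congruence: one must verify that the loss of one power of $\pi$ per application of $\phi$ (from Lemma~\ref{p-powers and valuations}) combined with the iterated $q$-th powers contributes exactly the right additional $\pi^{n-i}$ factor to make each summand vanish modulo $\pi^n$. Once this is organized cleanly, the rest of the argument is a mechanical induction, and $\pi$-torsion freeness is used only to guarantee that the candidate $a_n$ exists and is unique.
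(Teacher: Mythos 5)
Your proof is correct and follows essentially the same route as the paper's: induct on $n$, apply $\phi$ to the $(n-1)$-st ghost identity, invoke Lemma~\ref{p-powers and valuations} to replace each $\phi(a_i)^{q^{n-1-i}}$ by $a_i^{q^{n-i}}$ modulo $\pi^{n-i}$, and combine with the hypothesis $\phi(y_{n-1})\equiv y_n\bmod\pi^n$ to produce (via $\pi$-torsion freeness) the unique next component. The only cosmetic difference is that you spell out the base case $a_0:=y_0$ and the uniqueness of $a_n$, which the paper leaves implicit.
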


\begin{proof}
We will construct $(x_{j})$, the pre-image of $(y_{j})$, inductively.  Constructing $x_0$ is trivial.  Now assume we have constructed $(x_0, \ldots, x_{n-1})$.  Then we wish to choose $x_{n}$ such that
\[
\sum_{j = 0}^{n} \pi^j x_{j}^{q^{n-j}} = y_{n}.
\]
We know by our inductive hypothesis that 
\[
\sum_{j = 0}^{n-1} \pi^j x_{j}^{q^{n-1 - j}} = y_{n-1}.  
\]
Applying $\phi$ to both sides
\[
\sum_{j = 0}^{n-1} \pi^j \phi\left(x_{j}\right)^{q^{{n-1} - j}} = \phi(y_{n-1}).
\]
The right side is congruent to $y_{n}$ modulo $\pi^n$.  On the other hand, $\phi(x_j) \equiv x_j^q \bmod \pi$, and so by Lemma~\ref{p-powers and valuations}, $\phi(x_j)^{q^{n-1-j}} \equiv x_j^{q^{n-j}} \bmod \pi^{n-j}$.  This shows 
\[
\sum_{j = 0}^{n-1} \pi^j \phi\left(x_{j}\right)^{q^{{n-1} - j}} \equiv \sum_{j = 0}^{n-1} \pi^j x_{j}^{q^{n - j}} \bmod \pi^n. 
\]
Hence we can find $x_n$ such that 
\[
\sum_{j = 0}^{n} \pi^j x_{j}^{q^{n - j}} = y_n,
\]
which completes the construction.
\end{proof}

\begin{definition}[{\cite[\S{1.7}]{BorgerGeometry}}] \label{Frobenius definition}
The {\em generalized Witt vector Frobenius} is the unique natural transformation $\Fr: W_{\pi}(\bullet) \rightarrow W_{\pi}(\bullet)$ which has the following effect on ghost components:
\[
(a_0, a_1, a_2, \ldots) \mapsto (a_1, a_2, \ldots).
\]
In particular, for any fixed $\O_F$-algebra $A$, the map $\Fr: W_{\pi}(A) \rightarrow W_{\pi}(A)$ is an $\O_F$-algebra homomorphism.  
\end{definition}


\begin{definition}[{\cite[\S{3.7}]{BorgerGeometry}}]
The {\em generalized Witt vector Verschiebung} $V$ is defined on Witt vector components by
\[
(a_0, a_1, a_2, \ldots) \mapsto (0, a_0, a_1, \ldots).
\]
It is $\O_F$-linear but not multiplicative.
\end{definition}

\begin{lemma} \label{Frobenius polynomials}
The Witt vector Frobenius has the following description in terms of Witt components:
\[
(a_0, a_1, \ldots) \mapsto (a_0^q + \pi f_0(a_0, a_1), a_1^q + \pi f_1(a_0, a_1, a_2), \ldots).
\]
In other words, for arbitrary $i$, the $i$-th Witt component of $\Fr(\underline{a})$ has the form $a_i^{q} + \pi f_i$, where $f_i$ is some universal polynomial over $\O_K$ in the variables $a_0, \ldots, a_{i+1}$.  Moreover, $f_i$ is homogeneous of degree $q^{i+1}$ under the weighting in which $a_{j}$ has weight $q^j$.
\end{lemma}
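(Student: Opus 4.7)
The plan is to argue universally. Let $R := \O_F[a_0, a_1, \ldots]$ with the $a_j$ independent indeterminates, and put $\underline{a} := (a_0, a_1, \ldots) \in W_\pi(R)$. Since $R$ is $\pi$-torsion free, the ghost map $W_\pi(R) \to R^{\N}$ is injective, so the Witt components $b_0, b_1, \ldots$ of $\Fr(\underline{a})$ are uniquely determined as elements of $R$. Their existence---equivalently, the fact that the ghost vector $(w_1, w_2, \ldots)$ of $\Fr(\underline{a})$ lies in the image of the ghost map---is supplied by the Generalized Dwork Lemma (Proposition~\ref{dwork}) applied to the endomorphism $\phi$ of $R$ sending $a_j \mapsto a_j^q$, using that $w_{i+1} - \phi(w_i) = \pi^{i+1} a_{i+1}$ and hence $\phi(w_i) \equiv w_{i+1} \pmod{\pi^{i+1}}$.

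To read off the shape and dependence of $b_i$, I would solve the ghost equations inductively.  The identity $w_i(b_0, \ldots, b_i) = w_{i+1}(a_0, \ldots, a_{i+1})$ rearranges to
\[
\pi^i b_i \;=\; w_{i+1}(a_0, \ldots, a_{i+1}) \;-\; \sum_{j = 0}^{i-1} \pi^j b_j^{q^{i-j}},
\]
and $\pi$-torsion-freeness of $R$ exhibits $b_i$ as a polynomial in $a_0, \ldots, a_{i+1}$ over $\O_F$ (once $b_0, \ldots, b_{i-1}$ are known inductively to be polynomials in the earlier $a_j$'s).  To establish $b_i \equiv a_i^q \pmod{\pi}$, I would induct again: the base case $b_0 = w_1 = a_0^q + \pi a_1$ is immediate, and for the step, if $b_j \equiv a_j^q \pmod \pi$ for all $j<i$, then Lemma~\ref{p-powers and valuations} yields $b_j^{q^{i-j}} \equiv a_j^{q^{i+1-j}} \pmod{\pi^{i-j+1}}$, so $\pi^j b_j^{q^{i-j}} \equiv \pi^j a_j^{q^{i+1-j}} \pmod{\pi^{i+1}}$.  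Subtracting from $w_{i+1} = \sum_{j=0}^{i+1} \pi^j a_j^{q^{i+1-j}}$ leaves $\pi^i b_i \equiv \pi^i a_i^q \pmod{\pi^{i+1}}$, which gives $f_i := (b_i - a_i^q)/\pi \in \O_F[a_0, \ldots, a_{i+1}]$.

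For the homogeneity assertion, I would introduce an indeterminate $t$ and consider the $\O_F$-algebra automorphism $\sigma$ of $R[t, t^{-1}]$ defined by $a_j \mapsto t^{q^j} a_j$ (and $t$ fixed).  Under $\sigma$, each ghost component scales as $\sigma(w_i) = t^{q^i} w_i$, so the ghost vector $(w_1, w_2, \ldots)$ of $\Fr(\underline{a}) = (b_0, b_1, \ldots)$ is sent to $(t^q w_1, t^{q^2} w_2, \ldots)$.  A direct computation shows that this last tuple is precisely the ghost vector of $(t^q b_0, t^{q^2} b_1, \ldots)$, so injectivity of the ghost map over the $\pi$-torsion-free ring $R[t, t^{-1}]$ forces $\sigma(b_j) = t^{q^{j+1}} b_j$.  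Hence $b_j$---and therefore $f_j$---is homogeneous of weight $q^{j+1}$ under the weighting $a_k \mapsto q^k$.

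I do not anticipate a deep obstacle: existence and shape follow cleanly from Dwork together with careful $\pi$-adic bookkeeping, and homogeneity drops out of a single scaling automorphism.  The only mild subtlety lies in Step~2, where one must verify that the various powers of $\pi$ line up correctly when comparing $b_j^{q^{i-j}}$ with $a_j^{q^{i+1-j}}$ via Lemma~\ref{p-powers and valuations}, and in Step~3, in matching the shift from $q^i$ to $q^{i+1}$ in the ghost-component scaling to that in the Witt-component scaling after the Frobenius shift.
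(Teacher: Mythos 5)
Your proposal is correct, and it proves the lemma in full rather than deferring to outside references as the paper does; the paper simply cites the $p$-typical analogue in Davis--Kedlaya and Borger for the polynomial form, and offers the one-line hint ``use induction and the fact that the ghost components have this property'' for homogeneity. The substance is the same: you exploit $\pi$-torsion-freeness of the universal ring to solve the ghost equations inductively, and the mod-$\pi$ congruence $b_i \equiv a_i^q$ falls out of exactly the estimate via Lemma~\ref{p-powers and valuations} one would expect from the referenced $p$-typical argument. Where you diverge stylistically is in the homogeneity: instead of the explicit induction the paper gestures at (namely, $\pi^i b_i = w_{i+1} - \sum_{j<i}\pi^j b_j^{q^{i-j}}$ with each summand homogeneous of weight $q^{i+1}$ by the inductive hypothesis), you package the same information into a single scaling automorphism $a_j \mapsto t^{q^j} a_j$ and invoke ghost-map injectivity over $R[t, t^{-1}]$. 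This is a clean, uniform way to encode the weighted degree argument, and it avoids having to track the homogeneity bookkeeping term by term. One small remark: you correctly derive $f_i \in \O_F[a_0,\ldots,a_{i+1}]$, which is what should appear; the $\O_K$ in the statement is evidently a typo for $\O_F$.
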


\begin{proof}
The proof of the analogous result in the $p$-typical case \cite[Lemma~1.4]{DavisKedlayaFrobenius} easily adapts to this case.  The idea for proving homogeneity is to use induction and the fact that the ghost components have this property.  See also \cite[Lemma~3.2]{BorgerGeometry} for a proof of everything except homogeneity.
\end{proof}

\begin{lemma} \label{FV lemma}
If $R$ is any $\O_F$-algebra, we have $\Fr V = \pi$, where $\pi$ on the right side refers to the map $W_{\pi}(R) \rightarrow W_{\pi}(R)$ given by multiplication by $\pi$.  If $R$ is a $k_F = \O_F/\pi$-algebra, then we furthermore have $V\Fr = \Fr V = \pi$.
\end{lemma}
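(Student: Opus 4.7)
The plan is to verify $\Fr V = \pi$ by a computation on ghost components in the $\pi$-torsion-free case, extend to all $\O_F$-algebras by universality, and then deduce $V\Fr = \pi$ on $k_F$-algebras via a direct comparison with $\Fr V$ on Witt components.

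First I would compute the effect of $V$ on ghost components. If $\underline{a} = (a_0,a_1,\ldots)$ has $n$-th ghost component $w_n = \sum_{j=0}^{n} \pi^j a_j^{q^{n-j}}$, then the $0$-th ghost component of $V(\underline{a}) = (0,a_0,a_1,\ldots)$ is $0$, while for $n\ge 1$ its $n$-th ghost component is $\sum_{j=1}^{n} \pi^j a_{j-1}^{q^{n-j}} = \pi w_{n-1}(\underline{a})$. Thus in ghost coordinates $V$ sends $(w_0,w_1,\ldots)$ to $(0,\pi w_0,\pi w_1,\ldots)$, and composing with the left-shift description of $\Fr$ from Definition~\ref{Frobenius definition} gives $\Fr V:(w_0,w_1,\ldots)\mapsto(\pi w_0,\pi w_1,\ldots)$, which is multiplication by $\pi$ on ghost components. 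Since the ghost map is an injective ring homomorphism for $\pi$-torsion-free $R$, this yields $\Fr V = \pi$ in that case, and the identity extends to arbitrary $\O_F$-algebras by the universal polynomial argument from the discussion following Definition~\ref{Witt variant}: check it on a $\pi$-torsion-free test ring such as $\O_F[x_0,x_1,\ldots]$, then transfer via a surjection onto $R$.

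For the second assertion, I would use Lemma~\ref{Frobenius polynomials}: when $R$ is a $k_F$-algebra, the scalar $\pi$ acts as $0$ on $R$, so all of the correction terms $\pi f_i$ in the Witt-component formula for $\Fr$ vanish, leaving $\Fr(a_0,a_1,\ldots)=(a_0^q,a_1^q,\ldots)$. Applying $V$ and $\Fr$ in either order then yields $V\Fr(a_0,a_1,\ldots)=(0,a_0^q,a_1^q,\ldots)=\Fr V(a_0,a_1,\ldots)$, and combining with the first part gives $V\Fr=\pi$.

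The main subtlety is that the ghost map is no longer injective once $R$ has $\pi$-torsion (in particular, for $k_F$-algebras), so $\Fr V=\pi$ cannot be verified solely on ghost components in that setting; the universality reduction closes this gap, and the equality $V\Fr=\Fr V$ on $k_F$-algebras is then forced by the vanishing of the correction terms in $\Fr$. Aside from this bookkeeping, the argument parallels the classical $p$-typical case and I expect no further obstacle.
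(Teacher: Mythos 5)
Your proof is correct and follows essentially the same approach as the paper: the identity $\Fr V = \pi$ is checked on ghost components (with the usual reduction from general $R$ to the $\pi$-torsion-free case), and the equality $V\Fr = \Fr V$ on $k_F$-algebras is read off from the Witt-component description of $\Fr$ in Lemma~\ref{Frobenius polynomials}, since the correction terms $\pi f_i$ vanish when $\pi = 0$ in $R$. The paper's proof is simply a terser version of yours, so no comparison is needed.
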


\begin{proof}
The fact that $\Fr V = \pi$ is clear in terms of ghost components.  Using Lemma~\ref{Frobenius polynomials}, it is clear in terms of Witt components that when $\pi = 0$ in $R$, then $\Fr V = V\Fr$.
\end{proof}

\begin{corollary} \label{mult by pi char p}
  If the $\O_F$-algebra $R$ is a $k_F = \O_F/\pi$-algebra, then multiplication by $\pi$ on $W_{\pi}(R)$ has the following effect on Witt coordinates:
\[
\pi: (a_0, a_1, a_2, \ldots) \mapsto (0, a_0^q, a_1^q, \ldots).
\]
\end{corollary}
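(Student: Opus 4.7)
The plan is to combine the identity $\pi = V\Fr$ valid on $k_F$-algebras (the second half of Lemma~\ref{FV lemma}) with the explicit Witt-coordinate descriptions of $V$ and $\Fr$.

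First, I would observe that since $R$ is a $k_F$-algebra, Lemma~\ref{FV lemma} gives $\pi = V \circ \Fr$ as endomorphisms of $W_\pi(R)$. So for any $(a_0, a_1, a_2, \ldots) \in W_\pi(R)$, multiplication by $\pi$ equals $V(\Fr(a_0, a_1, a_2, \ldots))$.

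Next, I would apply Lemma~\ref{Frobenius polynomials} to write
\[
\Fr(a_0, a_1, a_2, \ldots) = (a_0^q + \pi f_0, a_1^q + \pi f_1, a_2^q + \pi f_2, \ldots).
\]
Since $\pi = 0$ in $R$, each correction term $\pi f_i$ vanishes, so this simplifies to $(a_0^q, a_1^q, a_2^q, \ldots)$. Finally, applying $V$ by its definition shifts the entries and inserts a $0$ in position zero, yielding $(0, a_0^q, a_1^q, \ldots)$, which is the claim.

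There is no real obstacle here; the result is essentially an immediate bookkeeping consequence of the three preceding lemmas. The only thing one might wish to double-check is that $\pi f_i \in W_\pi(R)$ truly vanishes, but this is automatic because the polynomials $f_i$ have coefficients in $\O_F$ and we are evaluating in the $\O_F$-algebra $R$ in which $\pi$ acts as zero, so $\pi f_i = 0$ as an element of $R$ regardless of what $f_i$ evaluates to.
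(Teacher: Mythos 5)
Your proof is correct and is exactly the argument the paper implicitly intends: the corollary is stated without a separate proof precisely because it follows immediately from $V\Fr = \pi$ (Lemma~\ref{FV lemma}) together with the fact that $\Fr$ reduces to the coordinate-wise $q$-power map once $\pi = 0$ in $R$ (Lemma~\ref{Frobenius polynomials}), and you have spelled out those steps faithfully.
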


\begin{proposition} \label{perfect k-algebra}
If the $\O_F$-algebra $R$ is a perfect $k_F$-algebra, then $W_{\pi}(R)$ is the unique $\pi$-adically complete and separated $\pi$-torsion free $\O_F$-algebra which satisfies $W_{\pi}(R)/\pi W_{\pi}(R) \cong R$.
\end{proposition}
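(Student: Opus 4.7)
The plan is to split into existence and uniqueness: existence follows essentially from the results of this section, and uniqueness is modelled on the classical argument for $p$-typical Witt vectors over a perfect field of characteristic $p$.

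For existence, I would read off all three properties from Corollary \ref{mult by pi char p}: multiplication by $\pi$ on $W_\pi(R)$ acts by $(a_0, a_1, \ldots) \mapsto (0, a_0^q, a_1^q, \ldots)$.  Since $R$ is perfect, the $q$-th power map on $R$ is bijective, so this action is injective and $W_\pi(R)$ is $\pi$-torsion free.  Iterating, $\pi^n W_\pi(R)$ is exactly the set of Witt vectors whose first $n$ entries vanish, which is the kernel of the truncation $W_\pi(R) \twoheadrightarrow R^n$ onto the first $n$ Witt coordinates.  The induced map $W_\pi(R) \to \varprojlim_n W_\pi(R)/\pi^n W_\pi(R)$ is then a bijection (both sides identify with $R^{\N}$), yielding $\pi$-adic completeness and separatedness; the case $n = 1$ recovers $W_\pi(R)/\pi W_\pi(R) \cong R$.

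For uniqueness, let $A$ be another such lift.  The plan is to construct an $\O_F$-algebra isomorphism $\Theta : W_\pi(R) \to A$ lifting the given identification modulo $\pi$.  For $a \in R$, choose any lifts $\tilde{a}_n \in A$ of $a^{1/q^n}$ (which exist because $R$ is perfect) and set $\tau(a) := \lim_n \tilde{a}_n^{q^n}$; iterated application of Lemma \ref{p-powers and valuations} shows that the sequence is Cauchy in the $\pi$-adic topology on $A$ and that the limit is independent of the choices, so $\tau : R \to A$ is a well-defined multiplicative section of the projection.  One then defines
\[
\Theta : W_\pi(R) \longrightarrow A, \qquad (a_0, a_1, \ldots) \longmapsto \sum_{n \ge 0} \pi^n \tau(a_n^{1/q^n}),
\]
which converges $\pi$-adically.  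That $\Theta$ is bijective then follows by reduction modulo $\pi$ (where it is the prescribed isomorphism) together with a short induction up the $\pi$-adic filtrations on both sides, using $\pi$-torsion freeness to identify each successive quotient $\pi^n/\pi^{n+1}$ with $R$ and completeness to pass to the limit.

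The step I expect to be the main obstacle is verifying that $\Theta$ is a ring homomorphism.  The plan is to parallel the classical $p$-typical argument: embedding both source and target into their generic fibres via $\pi$-torsion freeness, the universal Witt addition and multiplication polynomials describing $W_\pi$ (as established by Borger \cite{BorgerGeometry}), together with multiplicativity of $\tau$ and iterated use of Lemma \ref{p-powers and valuations}, show that each Witt-sum or Witt-product in $W_\pi(R)$ translates correctly under $\Theta$ into the corresponding operation on $\tau$-expansions in $A$.  Equivalently, one may construct a Frobenius lift $\phi : A \to A$ using Teichm\"uller expansions $a = \sum \pi^n \tau(c_n)$ and $\pi$-adic continuity (setting $\phi(\tau(c)) := \tau(c)^q$ and extending), invoke Proposition \ref{dwork} to identify $A[\tfrac{1}{\pi}]$ with the image of the ghost-coordinate map built from iterates of $\phi$, and verify on ghost components, where $\Theta$ becomes the tautological inclusion dictated by Definition \ref{Witt variant}.
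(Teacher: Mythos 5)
Your proposal is correct and follows essentially the same route as the paper: existence is read off from Corollary~\ref{mult by pi char p} (multiplication by $\pi$ as a $V$-shift with a $q$-power, plus perfectness of $R$, gives $\pi$-torsion freeness, completeness, separatedness, and the residue ring), and uniqueness goes via Teichm\"uller lifts $\tau : R \to A$ built from $\lim_n \tilde{a}_n^{q^n}$ using Lemma~\ref{p-powers and valuations}, after which every element of $A$ admits a unique expansion $\sum \pi^n \tau(c_n)$ and one matches these with Witt coordinates. The paper is terser at the uniqueness step — it simply cites Serre \cite[\S{II.5}]{Serre79} for the verification that the resulting bijection is a ring map — whereas you correctly identify this as the substantive point and sketch two standard ways to handle it; either is fine and both are in the spirit of the reference the paper defers to.
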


\begin{proof}
Corollary~\ref{mult by pi char p} makes it clear that when $R$ is a $k_F$-algebra for which the $q$-power map is injective, then $W_{\pi}(R)$ is $\pi$-torsion free, $\pi$-adically complete, and $\pi$-adically separated.  This corollary, together with surjectivity of the $q$-power map, also makes it clear that $\pi W_{\pi}(R) = V(W_{\pi}(R))$, and so the residue ring is isomorphic to $R$.  

It remains to prove that $W_{\pi}(R)$ is the unique such ring.  The proof is similar to the usual proof for Witt vectors; see for example \cite[\S{II.5}]{Serre79}.  The idea is to first note that if $a \equiv b \bmod \pi$, then $a^{q^i} \equiv b^{q^i} \bmod \pi^i$ as in Lemma~\ref{p-powers and valuations}.  Using this fact one constructs a unique family of Teichm\"uller lifts, and then notes that elements in a ring satisfying the properties of the proposition can be expressed uniquely as $\pi$-adic combinations of Teichm\"uller lifts.
\end{proof}

Let $\wt{\e}^+ := \varprojlim_{x \mapsto x^p} \O_{\c_F}/p\O_{\c_F}$; this is a perfect ring of characteristic~$p$ and its fraction field $\wt{\e}$ is algebraically closed.  By \cite[\S4]{Wintenberger}, the norm fields which we seek to lift can be embedded naturally into $\wt{\e}$.  Note that the usual $p$-typical Witt vectors appear on the right side of the following corollary.

\begin{corollary} \label{e and a corollary}
$W_{\pi}(\wt{\e}^+) \cong \wt{\a}^+_F := W(\wt{\e}^+) \otimes_{W(k_F)} \O_F$.
\end{corollary}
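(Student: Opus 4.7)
My plan is to deduce the isomorphism directly from the uniqueness statement in Proposition~\ref{perfect k-algebra}. The point is that $\wt{\e}^+$ is a perfect ring of characteristic $p$ containing $k_F$ (coming from the residue field of $\O_{\c_F}$, which contains $k_F$), hence a perfect $k_F$-algebra. Thus Proposition~\ref{perfect k-algebra} characterizes $W_{\pi}(\wt{\e}^+)$ as the \emph{unique} $\pi$-adically complete and separated, $\pi$-torsion free $\O_F$-algebra whose reduction modulo $\pi$ is $\wt{\e}^+$. It therefore suffices to check that $\wt{\a}_F^+=W(\wt{\e}^+)\otimes_{W(k_F)}\O_F$ has these three properties.

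First I would note that $\O_F$ is a finite free $W(k_F)$-module of rank $e$, where $e$ is the absolute ramification index of $F$. Consequently $\wt{\a}_F^+$ is a finite free $W(\wt{\e}^+)$-module of rank $e$, and in particular it is $p$-adically complete and separated (since $W(\wt{\e}^+)$ is, $\wt{\e}^+$ being a perfect $\F_p$-algebra). Because an Eisenstein relation gives $\pi^e = pu$ for some unit $u\in\O_F$, the $\pi$-adic and $p$-adic topologies on $\wt{\a}_F^+$ coincide, so $\wt{\a}_F^+$ is $\pi$-adically complete and separated.

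Second, since $W(k_F)$ is a discrete valuation ring and $W(\wt{\e}^+)$ is $p$-torsion free, $W(\wt{\e}^+)$ is flat over $W(k_F)$; base change then gives $\wt{\a}_F^+$ flat over $\O_F$, so multiplication by $\pi$ is injective on $\wt{\a}_F^+$. Third, the residue ring computes as
\[
\wt{\a}_F^+/\pi\wt{\a}_F^+ \;=\; W(\wt{\e}^+)\otimes_{W(k_F)} (\O_F/\pi\O_F) \;=\; W(\wt{\e}^+)\otimes_{W(k_F)} k_F \;=\; W(\wt{\e}^+)/pW(\wt{\e}^+) \;\cong\; \wt{\e}^+.
\]
The uniqueness assertion of Proposition~\ref{perfect k-algebra} now yields the desired isomorphism $W_{\pi}(\wt{\e}^+)\cong \wt{\a}_F^+$.

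There is no real obstacle here — the argument is entirely formal once one observes that $\wt{\e}^+$ is a perfect $k_F$-algebra, so that the uniqueness of Cohen-type lifts provided by Proposition~\ref{perfect k-algebra} applies. The only point that requires a moment of care is the interplay between the $\pi$-adic and $p$-adic topologies after base change from $W(k_F)$ to $\O_F$, which is handled by the standard observation that $\pi$ and $p$ generate topologically equivalent ideals in $\O_F$.
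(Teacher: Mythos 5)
Your proof is correct and takes essentially the same approach as the paper, which simply cites the uniqueness assertion of Proposition~\ref{perfect k-algebra} without spelling out the verifications. You have merely made explicit the routine checks (that $\wt{\a}_F^+$ is $\pi$-torsion free, $\pi$-adically complete and separated, with residue ring $\wt{\e}^+$) that the paper leaves implicit.
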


\begin{proof}
This follows from the uniqueness assertion of Proposition~\ref{perfect k-algebra}. 
\end{proof}

Borger's approach to Witt vectors emphasizes the importance of the following result.  It is also of central importance to our work, in that we use it to lift a (power of a) uniformizer for $\e_{L/K}^+$ to our Cohen ring $\ALK$ in the case that $L/K$ is a $\varphi$-iterate extension.  This existence of a lift is the only part of our construction which fails to work for arbitrary strictly APF extensions $L/K$.

\begin{proposition} \label{lambda prop}
Let $R$ denote a $\pi$-torsion free $\O_F$-algebra, and let $\varphi: R \rightarrow R$ denote an $\O_F$-algebra homomorphism that induces the $q$-power Frobenius modulo $\pi$.  Then there is a unique $\O_F$-algebra homomorphism 
\[
\lambda_{\varphi}: R \rightarrow W_{\pi}(R)
\]
which is a section to the projection $W_{\pi}(R) \rightarrow R$, and 
such that
\[
\lambda_{\varphi} \circ \varphi = \Fr \circ \lambda_{\varphi}.
\]
\end{proposition}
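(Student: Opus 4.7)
The plan is to construct $\lambda_\varphi$ by specifying its ghost components, then verify all required properties using $\pi$-torsion freeness.

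First I would define, for each $r \in R$, the sequence $\underline{w}(r) := (r, \varphi(r), \varphi^2(r), \ldots) \in R^{\N}$. The intertwining requirement $\lambda_\varphi \circ \varphi = \Fr \circ \lambda_\varphi$ forces the ghost components of $\lambda_\varphi(r)$ to be exactly this sequence: since the Witt vector Frobenius acts as the shift on ghost components (Definition~\ref{Frobenius definition}), the identity $\Fr^n(\lambda_\varphi(r)) = \lambda_\varphi(\varphi^n(r))$ combined with $w_0 \circ \lambda_\varphi = \id$ (the section condition) forces the $n$-th ghost component of $\lambda_\varphi(r)$ to equal $\varphi^n(r)$. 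This also establishes uniqueness at the level of ghost components, and since $R$ is $\pi$-torsion free the ghost map $W_\pi(R) \to R^{\N}$ is injective, so uniqueness of $\lambda_\varphi$ follows.

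The key step is to check that $\underline{w}(r)$ lies in the image of the ghost map. This is where the generalized Dwork lemma (Proposition~\ref{dwork}) comes in: applied with $\phi = \varphi$ and $y_n = \varphi^n(r)$, the hypothesis $\varphi(y_{n-1}) \equiv y_n \bmod \pi^n$ holds trivially because $\varphi(\varphi^{n-1}(r)) = \varphi^n(r)$. Thus there is a unique element $\lambda_\varphi(r) \in W_\pi(R)$ with ghost components $\underline{w}(r)$, and this defines $\lambda_\varphi$ as a map of sets.

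It remains to verify that $\lambda_\varphi$ is an $\O_F$-algebra homomorphism, a section of the projection, and satisfies the intertwining relation. Since the ghost map $W_\pi(R) \hookrightarrow R^{\N}$ is an injective $\O_F$-algebra map and each $\varphi^n$ is an $\O_F$-algebra endomorphism of $R$, the composite $r \mapsto \underline{w}(r)$ is an $\O_F$-algebra homomorphism $R \to R^{\N}$; $\O_F$-algebra structure on $\lambda_\varphi$ then descends by the injectivity of the ghost map. The section property is immediate from $w_0(\lambda_\varphi(r)) = r$ together with the fact that the projection $W_\pi(R) \to R$ sends $(a_0, a_1, \ldots)$ to $a_0 = w_0(a_0, a_1, \ldots)$. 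Finally, the ghost components of $\Fr(\lambda_\varphi(r))$ and $\lambda_\varphi(\varphi(r))$ both equal $(\varphi(r), \varphi^2(r), \ldots)$, so they agree by $\pi$-torsion freeness.

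There is no serious obstacle; the only nontrivial input is Proposition~\ref{dwork}, and the argument is otherwise a formal consequence of the injectivity of the ghost map on $\pi$-torsion free rings together with the fact that ghost components transform the Witt vector Frobenius into the shift.
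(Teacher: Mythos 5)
Your argument is correct, and it takes a genuinely different route from the paper: the paper simply cites Borger \cite[Proposition~1.9(c)]{BorgerGeometry}, translating the $\pi$-torsion free hypothesis into Borger's ``$E$-flatness,'' whereas you give a self-contained construction via ghost components. Concretely, you observe that the section condition $w_0 \circ \lambda_\varphi = \id$ together with the intertwining relation and the fact that $\Fr$ shifts ghost components force $w_n(\lambda_\varphi(r)) = \varphi^n(r)$; injectivity of the ghost map on $\pi$-torsion free rings then gives uniqueness, and the Generalized Dwork Lemma (Proposition~\ref{dwork}) applied to the constant sequence $y_n = \varphi^n(r)$ (for which the required congruences hold with equality) gives existence. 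The algebra-homomorphism and section properties descend formally along the injective ghost map. This is appealing because it stays entirely inside the machinery the paper has already set up rather than deferring to Borger's plethory-theoretic framework, and it makes transparent exactly where $\pi$-torsion freeness enters (injectivity of the ghost map); the tradeoff is that Borger's formulation is stated in a more general setting and may be the right citation if one later needs that generality.
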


\begin{proof}
See \cite[Proposition~1.9(c)]{BorgerGeometry}.  The $\pi$-torsion free requirement in our statement of the proposition corresponds to the ``$E$-flatness'' condition in Borger's paper.
\end{proof}

\begin{corollary} \label{ghost comps lambda}
Keep notation as in Proposition~\ref{lambda prop}.  For any $r \in R$, the ghost components of $\lambda_{\varphi}(r)$ are $(r, \varphi(r), \varphi^2(r), \ldots)$.  
\end{corollary}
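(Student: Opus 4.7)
The plan is to extract the ghost components of $\lambda_\varphi(r)$ directly from the two characterizing properties in Proposition~\ref{lambda prop}: that $\lambda_\varphi$ is a section to the projection onto the $0$-th Witt component, and that it intertwines $\varphi$ with the generalized Witt vector Frobenius $\Fr$.

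For the base case, by Definition~\ref{Witt variant} the $0$-th ghost component of any Witt vector $(a_0,a_1,\ldots)\in W_\pi(R)$ is $a_0$. Since $\lambda_\varphi$ is a section to the projection $(a_0,a_1,\ldots)\mapsto a_0$, the $0$-th Witt component, and therefore the $0$-th ghost component, of $\lambda_\varphi(r)$ is $r$. Now write $(w_0,w_1,w_2,\ldots)$ for the ghost components of $\lambda_\varphi(r)$. By Definition~\ref{Frobenius definition} the map $\Fr$ acts as a shift on ghost components, so the ghost components of $\Fr(\lambda_\varphi(r))$ are $(w_1,w_2,w_3,\ldots)$. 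On the other hand, the intertwining relation gives $\Fr(\lambda_\varphi(r))=\lambda_\varphi(\varphi(r))$, whose $0$-th ghost component, by the base case applied to $\varphi(r)$ in place of $r$, is $\varphi(r)$. Hence $w_1=\varphi(r)$. A straightforward induction on $n$---at each step applying the shift/intertwining identification to $\varphi^n(r)$ in place of $r$---then gives $w_n=\varphi^n(r)$ for all $n\ge 0$.

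There is no real obstacle here; the argument is purely formal, unpacking the definitions of the ghost map and of the Witt vector Frobenius against the two defining relations of $\lambda_\varphi$. Note that since $R$ is assumed $\pi$-torsion free in Proposition~\ref{lambda prop}, the ghost map is injective, so this calculation also recovers $\lambda_\varphi(r)$ uniquely from the formula for its ghost components.
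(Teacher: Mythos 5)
Your argument is correct and is essentially the same as the paper's: both extract the result from the intertwining relation $\lambda_\varphi\circ\varphi=\Fr\circ\lambda_\varphi$ together with the fact that $\lambda_\varphi$ is a section to the $0$-th Witt (equivalently ghost) component. The paper iterates the relation to $\lambda_\varphi\circ\varphi^n=\Fr^n\circ\lambda_\varphi$ in one step and compares $0$-th components, whereas you phrase it as a one-step induction, but the content is identical.
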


\begin{proof}
Notice that for any $n \geq 1$, 
\[
\lambda_{\varphi} \circ \varphi^n(r) = \Fr^n \circ \lambda_{\varphi}(r).
\]  
Consider the first Witt component (which is also the first ghost component) of both sides.  Consider the left-hand side first.  Because $\lambda_{\varphi}$ is a section to the first Witt component, we have that this first Witt component is $\varphi^n(r)$.   Now consider the right-hand side.  By the definition of the Witt vector Frobenius (Definition~\ref{Frobenius definition}), the first ghost component of $\Fr^n \circ \lambda_{\varphi}(r)$ is the $(n+1)$-st ghost component of $\lambda_{\varphi}(r)$.  This completes the proof. 
\end{proof}

The preceding results, especially Proposition~\ref{lambda prop}, are the fundamental properties we will require of $W_{\pi}(R)$.  We conclude this section with several lemmas of a more technical nature which we will require in our proofs.  

\begin{lemma} \label{multiplication by pi}
If $\underline{x} = (x_0, x_1, x_2, \ldots) \in W_{\pi}(R)$ is divisible by $\pi^i$, then $x_j \in \pi^{i-j}R$ for all $0 \leq j \leq i$.
\end{lemma}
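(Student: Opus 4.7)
The plan is to prove the lemma in two steps: first reduce to the case where $R$ is $\pi$-torsion free by functoriality, then use the ghost map and an inner induction on $n$.

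For the $\pi$-torsion free case, the ghost map $W_{\pi}(R) \hookrightarrow R^{\N}$ is injective and a ring homomorphism to $R^{\N}$ with its componentwise ring structure. The key input I would need is that the image of $\pi \in \O_F$ under the structure map $\O_F \to W_{\pi}(R)$ has ghost components $(\pi, \pi, \pi, \ldots)$. This follows by applying Proposition~\ref{lambda prop} to the pair $(\O_F, \id)$ (valid because $x^q \equiv x \bmod \pi$ in $\O_F$) to produce a section $\lambda_{\id}: \O_F \to W_{\pi}(\O_F)$ whose ghost components are $(a, a, a, \ldots)$ by Corollary~\ref{ghost comps lambda}; functoriality $W_{\pi}(\O_F) \to W_{\pi}(R)$ then transports this to $W_{\pi}(R)$. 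Consequently, writing $\underline{x} = \pi^i \underline{y}$ on ghost components gives
\[
w_n(\underline{x}) = \sum_{j=0}^n \pi^j x_j^{q^{n-j}} = \pi^i w_n(\underline{y}) \in \pi^i R \quad \text{for every } n \ge 0.
\]

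With this in hand, I would induct on $n$ for $0 \le n \le i$ to show $x_n \in \pi^{i-n} R$. The base case $n=0$ is immediate since $x_0 = w_0(\underline{x}) \in \pi^i R$. For the inductive step, solve the ghost relation for $\pi^n x_n$ as
\[
\pi^n x_n = w_n(\underline{x}) - \sum_{j=0}^{n-1} \pi^j x_j^{q^{n-j}},
\]
and observe that for each $j < n \le i$, the inductive hypothesis gives $\pi^j x_j^{q^{n-j}} \in \pi^{j + (i-j) q^{n-j}} R \subseteq \pi^i R$, since $(i-j)(q^{n-j} - 1) \ge 0$. Hence $\pi^n x_n \in \pi^i R$, and $\pi$-torsion freeness (together with $n \le i$) lets me divide by $\pi^n$ to conclude $x_n \in \pi^{i-n} R$.

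For the general case where $R$ may have $\pi$-torsion, I would choose a surjection $S \twoheadrightarrow R$ from a $\pi$-torsion free $\O_F$-algebra $S$ (for instance, a polynomial $\O_F$-algebra indexed by the elements of $R$). Since $W_{\pi}(\bullet)$ agrees with $\bullet^{\N}$ on underlying sets, the induced map $W_{\pi}(S) \twoheadrightarrow W_{\pi}(R)$ is surjective componentwise; lifting $\underline{y}$ to some $\underline{\widetilde{y}} \in W_{\pi}(S)$, the components of $\pi^i \underline{\widetilde{y}} \in W_{\pi}(S)$ satisfy the claim by the $\pi$-torsion free case, and reducing modulo the kernel gives $x_j \in \pi^{i-j} R$.

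I expect the main obstacle to be the identification of the ghost components of the distinguished element $\pi \in W_{\pi}(R)$, which is what makes the $\O_F$-algebra structure on $W_{\pi}(R)$ interact cleanly with the ghost map; everything else amounts to a bookkeeping argument with the inequality $(i-j)(q^{n-j}-1) \ge 0$. An alternative route would be induction on $i$ via the explicit Witt-coordinate polynomials for multiplication by $\pi$ (using Lemma~\ref{Frobenius polynomials} and Corollary~\ref{mult by pi char p} to control them modulo $\pi$), but the ghost approach above is more streamlined.
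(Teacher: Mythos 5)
Your proof is correct and follows the same route the paper gestures at ("induction and ghost components"), with the torsion-free reduction and the inequality $(i-j)(q^{n-j}-1)\ge 0$ filled in explicitly. The only cosmetic remark is that the detour through $\lambda_{\id}$ is unnecessary: the ghost map is by definition a natural transformation of functors into $\O_F$-algebras, hence automatically $\O_F$-linear, so $w_n(\pi^i\underline{y}) = \pi^i w_n(\underline{y})$ is immediate.
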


\begin{proof}
This follows easily using induction and ghost components.
\end{proof}

\begin{lemma} \label{W of pi-torsion free}
If the $\O_F$-algebra $R$ is $\pi$-torsion free, then $W_{\pi}(R)$ is also $\pi$-torsion free.
\end{lemma}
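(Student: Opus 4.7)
The plan is to reduce the statement to the injectivity of the ghost map on a $\pi$-torsion free $\O_F$-algebra $R$. Concretely, the ghost map
\[
w \colon W_\pi(R) \longrightarrow R^{\N}, \qquad (a_0,a_1,a_2,\ldots) \longmapsto (a_0,\ a_0^q+\pi a_1,\ a_0^{q^2}+\pi a_1^q+\pi^2 a_2,\ \ldots)
\]
is an $\O_F$-algebra homomorphism by Definition~\ref{Witt variant}, and its $n$-th component expresses $\pi^n a_n$ as an explicit polynomial in $w_0,\ldots,w_n$ and $a_0,\ldots,a_{n-1}$. Since $R$ is $\pi$-torsion free, one can solve inductively for $a_n$, so $w$ is injective.

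Given this, the lemma is immediate. Suppose $\underline{x}=(x_0,x_1,\ldots)\in W_\pi(R)$ satisfies $\pi\underline{x}=0$. Applying $w$ and using that $w$ is an $\O_F$-algebra map, we obtain $\pi\cdot w(\underline{x})=0$ in $R^{\N}$. But $R^{\N}$ is $\pi$-torsion free because $R$ is, so $w(\underline{x})=0$, and hence $\underline{x}=0$ by injectivity of $w$. There is no real obstacle here: the only point that requires the $\pi$-torsion free hypothesis is the injectivity of the ghost map, which is a direct induction on the ghost component formulas. One could alternatively argue componentwise using Lemma~\ref{multiplication by pi} to show $x_j\in\bigcap_i\pi^{i-j}R$, but the ghost map route is cleaner and uses exactly the hypothesis at hand.
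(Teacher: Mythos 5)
Your proof is correct, and it takes a slightly different route from the paper's. You reduce to the injectivity of the ghost map on $\pi$-torsion free rings, which follows by solving inductively for the Witt components (each $\pi^n a_n$ is determined by $w_n$ and $a_0,\ldots,a_{n-1}$, and division by $\pi^n$ is unique in a $\pi$-torsion free ring). The paper instead argues directly at the level of Witt components: if $\pi\underline{x}=0$ then the first Witt component (equal to the first ghost component) forces $\pi x_0=0$, hence $x_0=0$; one then writes $\underline{x}=V(\underline{y})$ and uses that $V$ is injective and $\O_F$-linear (so $\pi V(\underline{y})=V(\pi\underline{y})$) to conclude $\pi\underline{y}=0$ and descend. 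Both are short and elementary; your ghost-map argument has the mild advantage of isolating the injectivity statement (which is a reusable fact and is, for instance, exactly what underlies the Dwork-lemma construction), while the paper's Verschiebung argument stays entirely within the Witt-component picture and avoids passing through $R^{\N}$.
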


\begin{proof}
If $\pi \underline{x} = 0$ for $\underline{x} \in W_{\pi}(R)$, then at least the component $x_0$ is zero (because $R$ is assumed $\pi$-torsion free).  But using the fact that Verschiebung is injective and $\O_F$-linear, we reduce to this case.
\end{proof}

\begin{lemma} \label{W separated}
If the $\O_F$-algebra $R$ is $\pi$-adically separated, then $W_{\pi}(R)$ is $\pi$-adically separated.
\end{lemma}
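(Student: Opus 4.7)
The plan is to reduce the $\pi$-adic separatedness of $W_{\pi}(R)$ directly to that of $R$, component by component, using the preceding Lemma~\ref{multiplication by pi} as the essential input. Suppose that $\underline{x}=(x_0,x_1,x_2,\ldots)$ lies in $\bigcap_{i\ge 1}\pi^{i}W_{\pi}(R)$; the goal is to show $\underline{x}=0$, equivalently $x_j=0$ for every $j\ge 0$.

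Fix an arbitrary component index $j$. For each integer $i\ge j$, the hypothesis $\underline{x}\in\pi^{i}W_{\pi}(R)$ combined with Lemma~\ref{multiplication by pi} gives $x_j\in\pi^{i-j}R$. Letting $i$ range over all integers $\ge j$, we obtain
\[
x_j\ \in\ \bigcap_{m\ge 0}\pi^{m}R.
\]
Since $R$ is assumed to be $\pi$-adically separated, this intersection is zero, so $x_j=0$. As $j$ was arbitrary, $\underline{x}=0$, which proves $\bigcap_{i\ge 1}\pi^{i}W_{\pi}(R)=0$.

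There is no real obstacle here: the content is already packaged in Lemma~\ref{multiplication by pi}, which converts divisibility of a Witt vector by a power of $\pi$ into divisibility of its individual Witt components; the separatedness of each component then transfers immediately to separatedness of the whole vector. One might expect to need Corollary~\ref{mult by pi char p} or the Verschiebung description of $\pi W_{\pi}(R)$, but in fact working ghost-theoretically through Lemma~\ref{multiplication by pi} bypasses any characteristic $p$ assumption and keeps the argument valid for the general $\pi$-torsion free (or arbitrary) $\O_F$-algebras $R$ allowed in the statement.
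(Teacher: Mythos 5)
Your proof is correct and is essentially the paper's argument in contrapositive form: both reduce everything to Lemma~\ref{multiplication by pi}, converting $\pi^i$-divisibility of the Witt vector into $\pi^{i-j}$-divisibility of the $j$-th component and then invoking separatedness of $R$. The only cosmetic difference is that you argue directly from an element of $\bigcap_i \pi^i W_\pi(R)$, whereas the paper starts from a nonzero element and exhibits a power of $\pi$ not dividing it.
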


\begin{proof}
Assume $\underline{x} \in W_{\pi}(R)$ is non-zero, and say for example that $x_j \neq 0$.  Then by the assumption that $R$ is $\pi$-adically separated, we know that $x_j \not\in \pi^N R$ for some suitably large $N$.  But then by Lemma~\ref{multiplication by pi}, we know that $\underline{x} \not\in \pi^{j + N} W_{\pi}(R)$.  
\end{proof}

\begin{remark}
The authors suspect that when $R$ is $\pi$-adically complete and $\pi$-adically separated, then $W_{\pi}(R)$ is also $\pi$-adically complete.  It is not hard to find a candidate limit by working componentwise, but proving it is actually the limit seems difficult, because $\pi$-divisibility of elements in $W_{\pi}(R)$ seems difficult to detect when $R$ is $\pi$-torsion free.
\end{remark}

\begin{lemma} \label{converging ghost components}
Let $(R, \mathfrak{m})$ denote a complete local $\O_F$-algebra such that $\pi \in \mathfrak{m}$.  Let $\underline{x}_0, \underline{x}_1, \ldots \in W_{\pi}(R)$ denote a sequence of Witt vectors such that the ghost components
\[
(x_{i0}, x_{i1}, \ldots) \in R^{\N}
\] 
converge termwise to some sequence $(x_{\infty 0}, x_{\infty 1}, \ldots)$.  Then there is a Witt vector $\underline{x}_{\infty} \in W(R)$ whose ghost components are equal to $(x_{\infty 0}, x_{\infty 1}, \ldots)$.  In fact, the Witt vector components of $\underline{x}_0, \underline{x}_1, \ldots$ converge termwise to $\underline{x}_{\infty}$.
\end{lemma}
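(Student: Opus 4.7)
Write $a_{i,j} \in R$ for the $j$-th Witt component of $\underline{x}_i$; my strategy is to show by induction on $j$ that the sequence $(a_{i,j})_{i\geq 0}$ is Cauchy in the $\mathfrak{m}$-adic topology on $R$. Granting this, set $a_{\infty,j} := \lim_i a_{i,j}$ (which exists by $\mathfrak{m}$-adic completeness of $R$) and form $\underline{x}_\infty := (a_{\infty,0}, a_{\infty,1}, \ldots) \in W_{\pi}(R)$. Since the $n$-th ghost component is the value at $(a_{\infty,0},\ldots,a_{\infty,n})$ of the $\O_F$-polynomial $y_0^{q^n} + \pi y_1^{q^{n-1}} + \cdots + \pi^n y_n$, which is continuous in the $\mathfrak{m}$-adic topology, it equals $\lim_i x_{i,n} = x_{\infty,n}$; this simultaneously exhibits the required Witt vector $\underline{x}_\infty$ and establishes the termwise Witt-component convergence.

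The induction itself is straightforward to set up. The base case $j = 0$ is immediate from $a_{i,0} = x_{i,0}$. For $j \geq 1$, assume inductively that $(a_{i,k})_i$ converges in $R$ for every $k < j$; rearranging the defining identity $x_{i,j} = \sum_{k=0}^{j} \pi^k a_{i,k}^{q^{j-k}}$ yields
\[
\pi^j a_{i,j} \;=\; x_{i,j} \;-\; \sum_{k=0}^{j-1} \pi^k a_{i,k}^{q^{j-k}},
\]
whose right-hand side is an $\O_F$-polynomial in $\mathfrak{m}$-adically Cauchy sequences; hence $(\pi^j a_{i,j})_i$ is itself $\mathfrak{m}$-adically Cauchy.

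The main obstacle is then the upgrade from Cauchyness of $(\pi^j a_{i,j})_i$ to that of $(a_{i,j})_i$ itself, since multiplication by $\pi^j$ is in general not an open map for the $\mathfrak{m}$-adic topology; this is precisely the ``difficulty in detecting $\pi$-divisibility'' the authors flag in the remark preceding the lemma. In the complete local Noetherian $\pi$-torsion-free setting where this lemma is applied (and where $\pi \in \mathfrak{m}$), Artin--Rees applied to the ideal $\pi R \subseteq R$ supplies a constant $c = c(j)$ such that $\pi^j R \cap \mathfrak{m}^N \subseteq \pi^j \mathfrak{m}^{N-c}$ for $N \gg 0$; combined with $\pi$-torsion-freeness this transfers Cauchyness of $(\pi^j a_{i,j})_i$ into Cauchyness of $(a_{i,j})_i$ with at most a bounded loss of precision, closing the induction. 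For fully general $R$, I would instead reduce to the universal $\pi$-torsion-free situation by lifting along a surjection $\wt{R} \twoheadrightarrow R$ from a suitable $\pi$-torsion-free complete local $\O_F$-algebra, use Proposition~\ref{dwork} to produce the abstract limit Witt vector in $\wt R$, and then push forward to obtain termwise convergence in $R$.
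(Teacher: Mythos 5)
Your proof follows the same overall strategy as the paper's: pass through ghost components, use that the ghost map becomes an isomorphism after inverting $\pi$ (with polynomial inverse), and recover the Witt components of the limit from the ghost components by continuity and completeness of $R$. The difference is in how carefully the ``upgrade'' step from Cauchy-ness of $(\pi^j a_{i,j})_i$ to Cauchy-ness of $(a_{i,j})_i$ is treated. The paper compresses this into the assertion that ``the components of the inverse map are $\mathfrak{m}$-adically continuous'' and that $\mathfrak{m}$-adic limits of elements of $R$ lie in $R$; you make the obstruction explicit and invoke Artin--Rees plus $\pi$-torsion-freeness to close it. In the Noetherian $\pi$-torsion-free setting (almost certainly what the authors intend by ``complete local $\O_F$-algebra''), your argument is a legitimately tighter version of theirs. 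It is in fact worth noting that some such hypothesis is genuinely needed: if $R$ has $\pi$-torsion one can cook up a sequence $\underline{x}_i$ alternating between $0$ and $V([r])$ for a nonzero $r$ with $\pi r = 0$, whose ghost components are identically zero but whose Witt components do not converge, so the ``in fact'' clause of the statement fails; your identification of this as the load-bearing hypothesis is a real clarification.

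Your fallback for general $R$, however, is not convincing as stated. Lifting the $\underline{x}_i$ along a surjection $\wt{R} \twoheadrightarrow R$ from a $\pi$-torsion-free $\O_F$-algebra does not obviously produce lifts whose ghost components still converge termwise in $\wt{R}$ (a lift of a Cauchy sequence need not be Cauchy), and Proposition~\ref{dwork} requires a Frobenius lift $\phi$ on $\wt{R}$, which need not exist for an arbitrary such $\wt{R}$; even granting both, the resulting Witt vector in $W_\pi(\wt{R})$ would give you existence of $\underline{x}_\infty$ downstairs, but the termwise convergence of the Witt components of the original $\underline{x}_i$ in $R$ would not follow, and as the counterexample above shows it is in fact false in that generality. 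Since the main (Artin--Rees) branch of your argument is correct and is where the lemma would be applied, the sensible fix is simply to drop the general-case paragraph and record the Noetherian $\pi$-torsion-free hypotheses explicitly, rather than try to extend the statement beyond where it is true.
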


\begin{proof}
We work momentarily in $R_{\pi} := R\left[ \frac{1}{\pi} \right]$. In this setting, the ghost map $W_{\pi}(R_{\pi}) \rightarrow R_{\pi}^{\N}$ is an isomorphism, and the inverse is given in terms of sequences of polynomials.  In particular, the components of the inverse map are $\mathfrak{m}$-adically continuous functions.   The element $\underline{x}_{\infty}$ exists in $W_{\pi}(R_{\pi})$, and its components are $\mathfrak{m}$-adic limits of elements in $R$.  Hence its components are themselves elements of $R$, and the claim follows.
\end{proof}

\section{Witt vector constructions} \label{Witt constructions section}

As in Notation~\ref{K notation}, let $F/\Q_p$ be a finite extension, $q := |k_F|$ its residue cardinality, and $\pi$ a uniformizer of $\O_F$.  Let $W_{\pi}(R)$ denote generalized Witt vectors with coefficients in $R$, as in Definition~\ref{Witt variant}.  Consider the inverse system $\varprojlim_{\Fr} W_{\pi}(\O_{\c_F})$, where the transition maps are the Witt vector Frobenius as in Definition~\ref{Frobenius definition}.  We first note that this Witt vector inverse limit enjoys the same functoriality properties as the Witt vectors.

\begin{remark}
Let $\gamma: \O_{\c_F} \rightarrow \O_{\c_F}$ denote any $\O_F$-algebra homomorphism.  This extends to a ring homomorphism $W_{\pi}(\gamma): W_{\pi}(\O_{\c_F}) \rightarrow W_{\pi}(\O_{\c_F})$ by Witt vector functoriality, and because the components of the Witt vector Frobenius are defined by polynomials with coefficients in $\O_F$ (Lemma~\ref{Frobenius polynomials}), we have that $\Fr \circ W_{\pi}(\gamma) = W_{\pi}(\gamma) \circ \Fr$.  Hence $\gamma$ induces a ring homomorphism 
\[
\varprojlim_{\Fr} W_{\pi}(\O_{\c_F}) \rightarrow \varprojlim_{\Fr} W_{\pi}(\O_{\c_F}), \qquad (\underline{x}_1, \underline{x}_q, \ldots) \mapsto (W_{\pi}(\gamma)(\underline{x}_1), W_{\pi}(\gamma)(\underline{x}_q), \ldots).
\]
\end{remark}

There is also a natural action of $\Fr$ on $\varprojlim_{\Fr} W_{\pi}(\O_{\c_F}) $, defined as follows.

\begin{definition} \label{def: Frob}
Define the Frobenius ring homomorphism on $\varprojlim_{\Fr} W_{\pi}(\O_{\c_F}) $ by the formula
\[
\Fr:\varprojlim_{\Fr} W_{\pi}(\O_{\c_F})  \rightarrow \varprojlim_{\Fr} W_{\pi}(\O_{\c_F}) , \qquad (\underline{x}_1, \underline{x}_q, \underline{x}_{q^2}, \ldots) \mapsto (\Fr(\underline{x}_1), \underline{x}_1, \underline{x}_q, \ldots).
\]
This definition is equivalent to applying $\Fr$ to each entry in the inverse system.
\end{definition}

Despite the fact that the Witt vector Frobenius $\Fr: W_{\pi}(\O_{\c_F}) \rightarrow W_{\pi}(\O_{\c_F})$ is neither injective nor surjective, the above map $\Fr: \varprojlim_{\Fr} W_{\pi}(\O_{\c_F})  \rightarrow \varprojlim_{\Fr} W_{\pi}(\O_{\c_F})$ is an automorphism.

\begin{lemma}
The ring homomorphism $\Fr$ of Definition~\ref{def: Frob} is an automorphism.
\end{lemma}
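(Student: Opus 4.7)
The plan is to verify bijectivity directly from the formula, exploiting the fact that the transition maps in the inverse system are themselves given by $\Fr$. The key observation is that $\Fr$ on $\varprojlim_{\Fr} W_{\pi}(\O_{\c_F})$ is essentially a ``shift-right'' map, whose inverse is the ``shift-left'' map that drops the first coordinate.

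For injectivity, suppose $\Fr(\underline{x}_1, \underline{x}_q, \underline{x}_{q^2}, \ldots) = 0$. Reading off the second, third, fourth,\ldots coordinates of the image $(\Fr(\underline{x}_1), \underline{x}_1, \underline{x}_q, \underline{x}_{q^2}, \ldots)$ gives $\underline{x}_1 = \underline{x}_q = \underline{x}_{q^2} = \cdots = 0$, so the input is zero.

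For surjectivity, let $(\underline{y}_1, \underline{y}_q, \underline{y}_{q^2}, \ldots) \in \varprojlim_{\Fr} W_{\pi}(\O_{\c_F})$, meaning $\Fr(\underline{y}_{q^{i+1}}) = \underline{y}_{q^i}$ for all $i \geq 0$. Define $\underline{x}_{q^i} := \underline{y}_{q^{i+1}}$. The compatibility condition $\Fr(\underline{x}_{q^{i+1}}) = \Fr(\underline{y}_{q^{i+2}}) = \underline{y}_{q^{i+1}} = \underline{x}_{q^i}$ shows that $(\underline{x}_1, \underline{x}_q, \ldots)$ lies in the inverse limit. Its image under $\Fr$ is $(\Fr(\underline{y}_q), \underline{y}_q, \underline{y}_{q^2}, \ldots) = (\underline{y}_1, \underline{y}_q, \underline{y}_{q^2}, \ldots)$, using the compatibility $\Fr(\underline{y}_q) = \underline{y}_1$ in the original sequence.

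There is no real obstacle here: the result is a purely formal consequence of the fact that the transition maps defining the inverse limit coincide with the endomorphism being inverted. In particular, one does not need to know that $\Fr$ is injective or surjective on $W_{\pi}(\O_{\c_F})$ itself (and indeed, as noted in the statement preceding the lemma, it is neither).
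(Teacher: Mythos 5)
Your proof is correct and follows the same idea as the paper's: the left-shift map is the two-sided inverse of $\Fr$ on the inverse limit, precisely because the transition maps in the inverse system are $\Fr$ itself. The paper simply asserts this directly, whereas you spell out the injectivity and surjectivity checks; the content is identical.
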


\begin{proof}
It's clear that the left-shift map
\[
(\underline{x}_1, \underline{x}_q, \ldots) \mapsto (\underline{x}_q, \underline{x}_{q^2}, \ldots) 
\]
is a two-sided inverse to $\Fr$.  Hence the ring homomorphism $\Fr$ is bijective, hence an isomorphism.  
\end{proof}

We will now use this Witt vector inverse limit construction to describe a familiar ring from $p$-adic Hodge theory.  Recall the perfect characteristic~$p$ ring $\wt{\e}^+$ described above Corollary~\ref{e and a corollary}.  

\begin{proposition}[{\cite[Proposition~4.5]{DavisKedlaya}}] \label{prop: Fontaine}
There are canonical isomorphisms 
\[ 
\varprojlim_{\Fr} W_{\pi}(\O_{\c_F}) \stackrel{\varpi}{\rightarrow} \varprojlim_{\Fr} W_{\pi}(\O_{\c_F} / (\pi)) 
\stackrel{\alpha}{\rightarrow} W_{\pi}( \varprojlim \O_{\c_F} /(\pi)) \stackrel{\sim}{\rightarrow} W(\wt{\e}^+) \otimes_{W(k_F)} \O_F=:\wt{\a}^+_F.
\]
Here the transition maps in $\varprojlim \O_{\c_F}/(\pi)$ are the $q$-power Frobenius, and only the Witt vectors in the right-most term are the classical $p$-typical Witt vectors; all other Witt vectors are the generalized Witt vectors as in Definition~$\ref{Witt variant}$.
\end{proposition}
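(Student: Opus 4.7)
The strategy is to handle the three maps separately, isolating the combinatorial content of each. The last isomorphism is essentially formal (it is the content of Corollary~\ref{e and a corollary}): once one identifies $\varprojlim_{x\mapsto x^q}\O_{\c_F}/(\pi)$ with $\wt{\e}^+$—which is standard, since any two ideals containing a power of $p$ give rise to canonically isomorphic perfections—the ring $W_\pi(\wt{\e}^+)$ is, by Proposition~\ref{perfect k-algebra}, the unique $\pi$-adically complete and separated, $\pi$-torsion free $\O_F$-algebra lifting the perfect $k_F$-algebra $\wt{\e}^+$; and $W(\wt{\e}^+)\otimes_{W(k_F)}\O_F$ is visibly another such lift.

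The map $\alpha$ is also comparatively easy. Since $\O_{\c_F}/(\pi)$ is a $k_F$-algebra, Lemma~\ref{Frobenius polynomials} shows that $\Fr:W_\pi(\O_{\c_F}/(\pi))\to W_\pi(\O_{\c_F}/(\pi))$ is given componentwise by the $q$-th power map. Hence a compatible sequence $(\underline{x}_{q^i})_i$ in the inverse limit is the same data as a Witt vector each of whose components is a $q$-Frobenius-compatible sequence in $\O_{\c_F}/(\pi)$, i.e., an element of $\varprojlim_{x\mapsto x^q}\O_{\c_F}/(\pi)$. I would write this identification out explicitly and check it is a ring isomorphism (both sides are defined by the same ghost formulas).

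The main technical step is producing an inverse to the componentwise reduction map $\varpi$. Given $(\underline{x}_{q^i})_i \in \varprojlim_{\Fr} W_\pi(\O_{\c_F}/(\pi))$, choose arbitrary lifts $\underline{\tilde{x}}_{q^i}\in W_\pi(\O_{\c_F})$ and put
\[
\underline{y}_{q^i} \;:=\; \lim_{n\to\infty} \Fr^n(\underline{\tilde{x}}_{q^{i+n}}),
\]
the limit taken componentwise in the $\pi$-adic topology of $\O_{\c_F}$. The key estimate is that if $\underline{a},\underline{b}\in W_\pi(\O_{\c_F})$ satisfy $\underline{a}\equiv \underline{b}\bmod \pi$ componentwise, then $\Fr(\underline{a})\equiv \Fr(\underline{b})\bmod \pi^2$ componentwise. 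This follows from Lemma~\ref{Frobenius polynomials}, which writes the $i$-th component of $\Fr$ as $a_i^q+\pi f_i(a_0,\ldots,a_{i+1})$, combined with Lemma~\ref{p-powers and valuations} (which promotes mod-$\pi$ congruence to mod-$\pi^2$ after a $q$-th power). Iterating, any two lifts of the same element satisfy $\Fr^n(\underline a)\equiv \Fr^n(\underline b)\bmod \pi^{n+1}$ componentwise; applied to the two natural lifts $\underline{\tilde{x}}_{q^{i+n}}$ and $\Fr(\underline{\tilde{x}}_{q^{i+n+1}})$ of $\underline{x}_{q^{i+n}}$, this shows that the displayed sequence is Cauchy componentwise. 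Since $\O_{\c_F}$ is $\pi$-adically complete, the componentwise limits exist and assemble into a genuine element $\underline{y}_{q^i}\in W_\pi(\O_{\c_F})$; the same estimate shows the result is independent of the choice of lifts.

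It remains to verify that $(\underline{y}_{q^i})_i$ lies in the inverse limit, that $\varpi^{-1}$ so defined is a ring homomorphism, and that $\varpi\circ\varpi^{-1}=\id$ and $\varpi^{-1}\circ\varpi=\id$. Compatibility with $\Fr$ follows because $\Fr$ is given componentwise by polynomials, so commutes with componentwise $\pi$-adic limits; being a ring homomorphism likewise follows from the fact that the Witt addition and multiplication polynomials are continuous in the componentwise $\pi$-adic topology; and the two compositions are the identity by direct inspection, using independence of the chosen lifts. The hard part here is really the convergence estimate, which is the $\O_F$-version of Davis--Kedlaya \cite{DavisKedlaya}: everything else is formal manipulation with the generalized Witt machinery already developed.
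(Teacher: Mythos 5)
Your proposal is correct and follows essentially the same strategy as the paper: identify the final isomorphism via the uniqueness statement of Proposition~\ref{perfect k-algebra}, treat $\alpha$ as a rearrangement of indices (using that $\Fr$ is the componentwise $q$-power map modulo~$\pi$), and handle $\varpi$ via the $\pi$-adic convergence of iterated Frobenii of arbitrary lifts. The only cosmetic difference is that you package the $\varpi$ step as a direct construction of a two-sided inverse, whereas the paper proves injectivity by a separate valuation estimate (its equation~(\ref{pi kernel})) and then surjectivity by the same limiting construction you use; the core estimate driving both arguments---that $\Fr$ improves componentwise congruences mod~$\pi^m$ to mod~$\pi^{m+1}$, via Lemmas~\ref{Frobenius polynomials} and~\ref{p-powers and valuations}---is identical.
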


Note that the statement that these isomorphisms are \emph{canonical} includes equivariance with respect to the action of the absolute
Galois group of $F$ on every term.

\begin{proof}
The ring $\varprojlim \O_{\c_F}/(\pi)$ is isomorphic to $\wt{\e}^+$ ({\em cf.} the proof of
Proposition \ref{independent of mfa}), which is a perfect $k_F$-algebra of characteristic $p$.  Hence the final isomorphism follows from Proposition~\ref{perfect k-algebra}.  Thus we concentrate on the first two maps.

The map $\varpi$ is induced by functoriality
of Witt vectors. We check that it is injective.   Write an element $\underline{x} \in \varprojlim W_{\pi}(\O_{\c_F})$ as $(\underline{x}_{q^i})$, where $\underline{x}_{q^i} \in W_{\pi}(\O_{\c_F})$ and where $\Fr^n(\underline{x}_{q^{i+n}}) = \underline{x}_{q^i}$.  
Using the definition of the Witt vector Frobenius, we obtain the equation
\begin{equation} \label{pi kernel}
x_{q^i 0} = \sum_{j=0}^{n} \pi^jx_{q^{i+n}j}^{q^{n-j}} \qquad (i,n \geq 0). 
\end{equation}
Suppose now that $\varpi(\underline{x}) = 0$.  This implies that $v(x_{q^i j}) \geq 1$ for all $i,j \geq 0$, where we write $v$ for the $\pi$-adic valuation on $\O_{\c_F}$.  
By \eqref{pi kernel}, for all $i,n \geq 0$, we have 
$v(x_{q^i 0}) \geq n$ because $j + q^{n-j} \geq n$ for $0 \leq j \leq n$.
Hence $x_{q^i 0} =
0$ for all $i \geq 0$. If for some $n$ we have $x_{q^i j} = 0$ for all $i$ and all $j < n$, then from \eqref{pi kernel} we immediately obtain $x_{q^i n} = 0$
for all $i \geq 0$. We thus conclude that $x_{q^i j} = 0$ for all $i,j \geq 0$, 
so $\varpi$ is injective.

To see that $\varpi$ is surjective, we construct a preimage of 
$\underline{x} \in \varprojlim_{\Fr} W_{\pi}(\O_{\c_F}/(\pi))$. For each $i,j \geq 0$,
choose any lift $y_{q^i j} \in \O_{\c_F}$ of $x_{q^i j} \in \O_{\c_F}/(\pi)$,
and put $\underline{y}_{q^i} = (y_{q^i 0}, y_{q^i 1}, \dots) \in W_{\pi}(\O_{\c_F})$.
Using the polynomials expressing Frobenius in terms of Witt components (Lemma~\ref{Frobenius polynomials}), one can check
that for each $i,j \geq 0$,  as $k \to \infty$,
the $j$-th Witt component of
$\Fr^k(\underline{y}_{q^{i+k}})$ converges $\pi$-adically
to some limit $z_{q^i j}$. These define an element $\underline{z} \in \varprojlim_{\Fr} W_{\pi}(\O_{\c_F})$
with $\varpi(\underline{z}) = \underline{x}$.

Having proved that $\varpi$ is an isomorphism, we now consider the map $\alpha$, which we must first define.  For $\underline{x} \in \varprojlim_{\Fr} W_{\pi}(\O_{\c_F} /(\pi))$,
the sequence $\underline{y}_{i} = (x_{1 i}, x_{q i}, x_{q^2 i}, \dots)$ defines an element of
$\varprojlim \O_{\c_F}/(\pi)$ because the Witt vector Frobenius on $W(\O_{\c_F}/(\pi))$ is the map which in each coordinate sends $x \mapsto x^q$.  Using the polynomials defining the ring operations on generalized Witt vectors \cite[\S{1.19}]{BorgerGeometry},
we can check that setting $\alpha(\underline{x}) = (\underline{y}_0, \underline{y}_1, \dots)$ in $W(\varprojlim \O_{\c_F}/(\pi))$
defines a ring homomorphism.
Finally,  the map $\alpha$ is clearly injective and surjective, as it simply involves permuting certain indices.  This completes the proof.
\end{proof}

\begin{remark} \label{weak topology remark}
Recall that the \emph{weak topology} on $W(\wt{\e}^+)$ is obtained by identifying $W(\wt{\e}^+)$ with $\prod \wt{\e}^+$ (as a set), and equipping the latter with the product topology, where
each factor is given its valuation topology. In terms of $\varprojlim W_{\pi}(\O_{\c_F})$, this corresponds to the coarsest topology such that the maps 
\[
\varprojlim_{\Fr} W(\O_{\c_F}) \rightarrow \varprojlim_{x \mapsto x^q} \O_{\c_F}/(\pi), \qquad (\underline{x}_{1}, \underline{x}_q, \underline{x}_{q^2}, \ldots ) \mapsto (\overline{x}_{1j}, \overline{x}_{qj}, \overline{x}_{q^2j},\ldots) 
\]
(which are not ring maps if $j > 0$) are continuous for all $j \geq 0$.  
\end{remark}

Recall that we write $q = p^a$.  Using the fact that modulo~$\pi$, the Witt vector Frobenius is the same as raising each coordinate to the $q$-th power, one can check that our map $\Fr$ defined in Definition~\ref{def: Frob} agrees with the map $\Fr^a \otimes 1$ on $W(\wt{\e}^+) \otimes_{W(k_F)} \O_F$, via the isomorphism in Proposition~\ref{prop: Fontaine}.

\begin{definition} \label{def: beta}
For an element $x_{q^i j} \in \O_{\c_F}$, write $\overline{x}_{q^i j}$ for its image in $\O_{\c_F}/(\pi)$.  Let $\beta$ denote the ring homomorphism
\[
\beta: \varprojlim_{\Fr} W_{\pi}(\O_{\c_F}) \rightarrow \varprojlim_{x \mapsto x^q} \O_{\c_F}/(\pi), \qquad \underline{x} \mapsto (\overline{x}_{11}, \overline{x}_{q1}, \overline{x}_{q^2 1}, \ldots).
\]
\end{definition}

\begin{remark}\label{betamapexplicit}
Under the isomorphism 
\[
\varprojlim_{\Fr} W_{\pi}(\O_{\c_F}) \cong W(\wt{\e}^+) \otimes_{W(k_F)} \O_F=:\wt{\a}^+_F.
\]
from Proposition~\ref{prop: Fontaine}, the ring homomorphism $\beta$ corresponds to reduction modulo the ideal generated by $1 \otimes \pi$.  In terms of $W_{\pi}(\wt{\e}^+)$, the map $\beta$ corresponds to projection onto the first Witt component (or equivalently, onto the first ghost component).
\end{remark}

Proposition~\ref{prop: Fontaine} allows us to relate $\varprojlim_{\Fr} W_{\pi}(\O_{\c_F})$ to $\wt{\a}^+ = W(\wt{\e}^+)$.  One pleasant feature of this comparison is that it provides a simple description of the $\theta$ map from $p$-adic Hodge theory.  We first recall this map.

\begin{definition} \label{theta for ramified}
Define the $\theta$ map 
\[
\theta: W \left( \varprojlim_{x \mapsto x^p} \O_{\c_F}/(\pi) \right) \rightarrow \O_{\c_F}, \qquad \sum p^i [a_i] \mapsto p^i \lim_{j \rightarrow \infty} \wt{a_{ij}}^{p^j},
\]
where $a_{ij}$ is the $j$-th term in the inverse system corresponding to $a_i$, and where $\wt{a_{ij}}$ is any lift from $\O_{\c_F}/(\pi)$ to $\O_{\c_F}$.  If we replace the transition maps $x \mapsto x^p$ by $x \mapsto x^q$, the map becomes
\[
\theta: \sum p^i [a_i] \mapsto p^i \lim_{j \rightarrow \infty} \wt{a_{ij}}^{q^j}.
\]
For ramified Witt vectors, this becomes
\[
\theta: W \left( \varprojlim_{x \mapsto x^q} \O_{\c_F}/(\pi) \right) \otimes_{W(k_F)} \O_F \rightarrow \O_{\c_F}, \qquad \sum \pi^i [a_i] \mapsto \sum \pi^i \lim_{j \rightarrow \infty} \wt{a_{ij}}^{q^j}.
\]
\end{definition}

\begin{proposition}
For an element $\underline{x} \in \varprojlim_{\Fr} W_{\pi}(\O_{\c_F})$, write $(\underline{x}_1, \underline{x}_q, \ldots, \underline{x}_{q^i}, \ldots)$ for the Witt vectors in this inverse system and write $x_{q^i j}$ for the $j$-th Witt component of the Witt vector $\underline{x}_{q^i}$.  Define $\theta$ to be the ring homomorphism
\[
\theta :\varprojlim_{\Fr} W_{\pi}(\O_{\c_F}) \rightarrow \O_{\c_F}, \qquad \underline{x} \mapsto x_{q^{0}0}.
\]
Under the isomorphisms of Proposition~$\ref{prop: Fontaine}$, this induces the same map $\theta$ as in Definition~$\ref{theta for ramified}$.  
\end{proposition}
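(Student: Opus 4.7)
\medskip

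\noindent\textbf{Proof proposal.}  The plan is to chase an element $\underline{x} = (\underline{x}_1, \underline{x}_q, \underline{x}_{q^2}, \ldots) \in \varprojlim_{\Fr} W_{\pi}(\O_{\c_F})$ through the isomorphisms of Proposition~\ref{prop: Fontaine}, write its image in $W_\pi(\wt{\e}^+) \simeq \wt{\a}^+_F$ as a Teichm\"uller sum $\sum \pi^i [a_i]$, and then compute the classical $\theta$ of Definition~\ref{theta for ramified}, verifying that it recovers $x_{q^0 0} = x_{1\,0}$.

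First I would unwind the isomorphisms. Applying $\varpi$ replaces each Witt component $x_{q^k j}$ by its image $\overline{x}_{q^k j} \in \O_{\c_F}/(\pi)$; then $\alpha$ produces the Witt vector $\alpha\varpi(\underline{x}) = (\underline{y}_0, \underline{y}_1, \underline{y}_2, \ldots) \in W_\pi(\wt{\e}^+)$ whose $j$-th component is the sequence $\underline{y}_j = (\overline{x}_{1\,j}, \overline{x}_{q\,j}, \overline{x}_{q^2\,j}, \ldots) \in \varprojlim_{x \mapsto x^q} \O_{\c_F}/(\pi) \cong \wt{\e}^+$. Because $\wt{\e}^+$ is perfect, this element has a unique Teichm\"uller expansion $\sum_{i \ge 0} \pi^i [a_i]$, and Corollary~\ref{mult by pi char p} identifies $a_i$ as $\underline{y}_i^{1/q^i}$. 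In the inverse-system description of $\wt{\e}^+$, raising to the $1/q$ power is simply shifting left, so $a_i = (\overline{x}_{q^i\,i}, \overline{x}_{q^{i+1}\,i}, \overline{x}_{q^{i+2}\,i}, \ldots)$. In particular $x_{q^{i+j}\,i} \in \O_{\c_F}$ is itself a valid choice of lift $\wt{(a_i)_j}$ demanded by Definition~\ref{theta for ramified}.

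Next I would evaluate the classical $\theta$ on this expression. The required inner limit is $\lim_{j \to \infty} x_{q^{i+j}\,i}^{q^j}$; to show it converges $\pi$-adically, I would use the relation $\Fr(\underline{x}_{q^{k+1}}) = \underline{x}_{q^k}$ and Lemma~\ref{Frobenius polynomials} to obtain $x_{q^{k+1}\,i}^{q} \equiv x_{q^k\,i} \pmod{\pi}$, and then apply Lemma~\ref{p-powers and valuations} to bootstrap this to $x_{q^{i+j}\,i}^{q^j} \equiv x_{q^{i+j-1}\,i}^{q^{j-1}} \pmod{\pi^{j}}$. This shows that the sequence is $\pi$-adically Cauchy, with limit $\xi_i \in \O_{\c_F}$, so that $\theta_{\text{classical}}(\underline{x}) = \sum_{i \ge 0} \pi^i \xi_i$.

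Finally I would match this with the new definition $\theta(\underline{x}) = x_{1\,0}$. The identity $\Fr^n \underline{x}_{q^n} = \underline{x}_1$, read off from the $0$-th ghost component (and recalling that $\Fr$ shifts ghost components), yields the closed formula $x_{1\,0} = \sum_{i=0}^{N} \pi^i x_{q^N\,i}^{q^{N-i}}$ valid for every $N \ge 0$. Combining the Cauchy estimate from the previous paragraph with this identity gives $x_{1\,0} \equiv \sum_{i=0}^{N} \pi^i \xi_i \pmod{\pi^{N+1}}$ for all $N$, hence $x_{1\,0} = \sum_{i\ge 0} \pi^i \xi_i = \theta_{\text{classical}}(\underline{x})$. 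The main bookkeeping obstacle I anticipate is correctly lining up the two different roles that the index $i$ plays (as a Witt component on the one hand and as a position in the inverse system on the other), together with tracking the $1/q^i$-twist introduced by the Teichm\"uller decomposition; once that indexing is set up cleanly, everything reduces to the convergence estimate above and the ghost-component identity for $x_{1\,0}$.
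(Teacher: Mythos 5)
Your proposal is correct and follows essentially the same route as the paper: both unwind the isomorphism of Proposition~\ref{prop: Fontaine} via Corollary~\ref{mult by pi char p} to identify $a_{ij}$ with $\overline{x}_{q^{i+j}\,i}$, take $x_{q^{i+j}\,i}$ as the lift $\wt{a_{ij}}$, and then invoke the identity $x_{1\,0} = w_N(\underline{x}_{q^N}) = \sum_{i=0}^N \pi^i x_{q^N\,i}^{q^{N-i}}$ (constant in $N$, since $\Fr$ shifts ghost components) to conclude via a $\pi$-adic estimate. The only difference is organizational: you make the Cauchy estimate (via Lemma~\ref{p-powers and valuations}) explicit where the paper absorbs it into one equality in its chain.
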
  

\begin{proof}
We adapt the argument \cite[Proposition~5.8]{DavisKedlaya} which is due to Ruochuan Liu.  Assume 
\[
\underline{a} = \sum \pi^i [a_i] \in W(\wt{\e}^+) \otimes_{W(k_F)} \O_F
\]
corresponds to an element $\underline{x} \in \varprojlim_{\Fr} W_{\pi}(\O_{\c_F})$ under the isomorphisms of Proposition~\ref{prop: Fontaine}.  Recall that we let $\wt{a_{ij}}$ denote an arbitrary lift of $a_{ij} \in \O_{\c_F}/(\pi)$ to $\O_{\c_F}$.  To prove the proposition, we will describe
\[
\theta(\underline{a}) = \sum \pi^i \lim_{j \rightarrow \infty} \wt{a_{ij}}^{q^j}
\]
in terms of the coordinates of $\underline{x} \in \varprojlim_{\Fr} W_{\pi}(\O_{\c_F})$.  If we view $\underline{a}$ as an element of $W_{\pi}\left(\varprojlim_{x \mapsto x^q} \O_{\c_F}\right)$, then by Corollary~\ref{mult by pi char p} it has coordinates $(a_0, a_1^q, a_2^{q^2}, \ldots)$.  By definition of the maps $\varpi, \alpha$ in Proposition~\ref{prop: Fontaine}, we find that $x_{q^j i}$ is a lift of $a_{ij}^{q^i}$.  Then we can take $x_{q^{j+i} i}$ for $\wt{a_{ij}}$, our lift of $a_{ij}$.  We then compute
\begin{align*}
\sum_{i = 0}^{\infty} \pi^i \lim_{j \rightarrow \infty} \wt{a_{ij}}^{q^j} &= \lim_{j \rightarrow \infty} \sum_{i = 0}^{j} \pi^i \wt{a_{ij}}^{q^j} \\
&= \lim_{j \rightarrow \infty} \sum_{i = 0}^{j} \pi^i x_{q^{j+i} i}^{q^j} \\ 
&= \lim_{j \rightarrow \infty} \sum_{i = 0}^{j} \pi^i x_{q^{j} i}^{q^{j-i}}.
\intertext{The term $\sum_{i = 0}^{j} \pi^i x_{q^{j} i}^{q^{j-i}} \in \O_{\c_F}$ is the $j$-th ghost component of the Witt vector $\underline{x}_{q^j}$.  If we write $w_j$ for this ghost component, then the above limit becomes}
&= \lim_{j \rightarrow \infty} w_j(\underline{x}_{q^j}).
\end{align*}
Because $\underline{x} \in \varprojlim_{\Fr} W_{\pi}(\O_{\c_F})$, we have $\Fr(\underline{x}_{q^{j}}) = \underline{x}_{q^{j-1}}$.  By the definition of the Frobenius map (Definition~\ref{Frobenius definition}), we find that $w_{j}(\underline{x}_{q^j})$ is independent of $j$.  Taking $j = 0$ completes the proof.
\end{proof}

\section{Norm rings} \label{Sec: norm}

Let $L/K$ denote an arbitrary strictly APF extension.  Recall \cite[1.4]{Wintenberger} that 
canonically attached to $L/K$ is the sequence of ramification breaks $\{b_m\}_{m > 0}$:
this is the increasing sequence of real numbers $b$ for which $G_K^{b+\varepsilon}G_L\neq G_K^bG_L$
for all $\varepsilon > 0$.  Let $K_0$ be the maximal unramified extension of $K$ contained in $L$,
and for $m>0$ denote by $K_m$
the fixed field of $G_K^{b_m}G_L$ acting on $\Kbar$; it is
a subfield of $L$ of finite degree over $K$ with the property
that $K_{n+1}/K_n$ is {\em elementary of level $i_n:=\Psi_{L/K}(b_n)$},
where $\Psi_{L/K}$ is the transition function of Herbrand as in \cite[1.2.1]{Wintenberger}.
Following \cite[1.4]{Wintenberger}, we call $\{K_m\}_{m\ge 0}$ the {\em tower of elementary
extensions} associated to $L/K$, and for $m\ge 1$ we write $r_m$ for the unique positive integer
determined by $p^{r_m}=[K_m:K_1]$.
 

\begin{lemma}\label{Autaction}
	Each $K_m$ is preserved by $\Aut(L/K)$.  In particular,
	when $L/K$ is Galois, so is $K_m/K$.
\end{lemma}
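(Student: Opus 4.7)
The plan is to leverage the fact that the upper-numbering ramification subgroups $G_K^b$ are normal in $G_K$, so that any $K$-automorphism of $L$, lifted to $G_K$, conjugates the defining subgroup $G_K^{b_m}G_L$ to itself and hence preserves the corresponding fixed field.

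First I would dispose of $K_0$: it is the maximal unramified subextension of $L/K$, which is a canonical subfield (it is the fraction field of the Witt vectors of the residue field of $L$, or equivalently the unique maximal subextension with trivial inertia), so automatically preserved by any $K$-automorphism of $L$.

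For $m\ge 1$, fix $\sigma\in\Aut(L/K)$ and choose any extension of $\sigma$ to a $K$-automorphism $\wt\sigma\in G_K=\Aut(\Kbar/K)$; such an extension exists because $\Kbar$ is an algebraic closure of $L$ and algebraic field embeddings always extend. Since $\sigma(L)=L$, we have $\wt\sigma(L)=L$, and a direct check shows that for $g\in G_L$ and $\ell\in L$, the element $\wt\sigma^{-1}(\ell)$ lies in $L$ and is therefore fixed by $g$, so $\wt\sigma g\wt\sigma^{-1}$ restricts to the identity on $L$. This gives $\wt\sigma G_L\wt\sigma^{-1}=G_L$. On the other hand, $G_K^{b_m}$ is normal in $G_K$ by definition of the upper-numbering filtration, so $\wt\sigma G_K^{b_m}\wt\sigma^{-1}=G_K^{b_m}$. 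Therefore $\wt\sigma$ normalizes $G_K^{b_m}G_L$, and hence preserves its fixed field in $\Kbar$, namely $K_m$. Restricting back to $L$ yields $\sigma(K_m)=K_m$.

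For the second statement, suppose $L/K$ is Galois, so that $G_L$ is itself normal in $G_K$. Then $G_K^{b_m}G_L$ is a product of two normal subgroups of $G_K$, hence normal, so its fixed field $K_m$ is Galois over $K$. (Equivalently, for any $\tau\in G_K$ one has $\tau(L)=L$, so $\tau|_L\in\Gal(L/K)=\Aut(L/K)$, and by the first part $\tau(K_m)=\tau|_L(K_m)=K_m$; this normality of $K_m$ in $\Kbar$ gives the Galois conclusion.)

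There is no real obstacle here; the only point requiring a small argument is the verification that the lift $\wt\sigma$ normalizes $G_L$ when $L/K$ is merely an $\Aut$-stable extension and not assumed Galois, and this follows straightforwardly from $\wt\sigma(L)=L$.
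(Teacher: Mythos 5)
Your proof is correct and follows essentially the same route as the paper's: lift $\sigma$ to $\wt\sigma\in G_K$, observe that conjugation by $\wt\sigma$ preserves $G_K^{b_m}$ (normality of the upper-numbering filtration) and $G_L$ (since $\wt\sigma(L)=L$), hence preserves $G_K^{b_m}G_L$ and its fixed field $K_m$. Your explicit treatment of $K_0$ and the spelled-out Galois conclusion are minor additions; the core argument is identical.
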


\begin{proof}
	Fix $\sigma\in \Aut(L/K)$, and let $\wt{\sigma}\in G_K$ be any extension of $\sigma$
	to an automorphism of $\Kbar$.  Then 
	$${\wt{\sigma}}^{-1} G_K^{b_m}G_L \wt{\sigma} = ({\wt{\sigma}}^{-1} G_K^{b_m} \wt{\sigma})(
	{\wt{\sigma}}^{-1} G_L\wt{\sigma}) = G_K^{b_m} G_L$$
	as $G_K^{b_m}$ is normal in $G_K$ and $G_L$ is stable under conjugation
	by any automorphism of $\Kbar$ that preserves $L$.  The result follows.  
\end{proof}

Let us recall the definition of the {\em perfect norm field} associated to $L/K$:

\begin{proposition} \label{independent of mfa}
		Let $\mfa\subseteq \O_L$ be any ideal 
		with the property that the 
		$\mfa$-adic and $p$-adic topologies on $\O_L$ coincide.  
		Then reduction
		modulo $\mfa$ induces a multiplicative bijection
		\begin{equation}
		\xymatrix{
			{\varprojlim_{x\mapsto x^p} \O_{\wh{L}}}\ar[r]^-{\simeq} & 
			{\varprojlim_{x\mapsto x^p} \O_{L}/\mfa}.
			}\label{standard}
		\end{equation}
		In particular, $\wt{\e}_{L/K}^+:={\varprojlim_{x\mapsto x^p} \O_{L}/\mfa}$ is naturally a subring of 
		$\wt{\e}^+$ that is independent of $\mfa$.
	\end{proposition}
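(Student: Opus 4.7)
The plan is to construct a multiplicative inverse to the reduction map, following Fontaine's classical argument of taking $p$-th powers of arbitrary lifts.  First, since the $\mfa$-adic and $p$-adic topologies on $\O_L$ coincide, we have $p^N\O_L\subseteq \mfa$ for some $N\ge 1$, so $\O_L/\mfa$ is $p^N$-torsion and hence $p$-adically complete.  Reduction modulo $\mfa$ therefore extends to the completion, inducing an isomorphism $\O_{\wh{L}}/\mfa\O_{\wh{L}}\simeq \O_L/\mfa$; in particular the map \eqref{standard} is induced by a ring surjection in each coordinate, so it is automatically multiplicative.

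To build the inverse, given $\overline{x} = (\overline{x}_n)\in \varprojlim_{x\mapsto x^p} \O_L/\mfa$, I would choose arbitrary lifts $\widehat{x}_n\in \O_{\wh{L}}$ of $\overline{x}_n$ and define
$$
x_n \ := \ \lim_{m\to\infty} \widehat{x}_{n+m}^{p^m}.
$$
The workhorse is the elementary estimate: if $a\equiv b \pmod{\b}$ in $\O_{\wh{L}}$ for any ideal $\b$, then the binomial theorem gives $a^p\equiv b^p\pmod{p\b + \b^p}$.  Starting from $\widehat{x}_{n+m+1}^p\equiv\widehat{x}_{n+m}\pmod{\mfa\O_{\wh{L}}}$ and iterating this estimate $m$ times shows that the difference $\widehat{x}_{n+m+1}^{p^{m+1}} - \widehat{x}_{n+m}^{p^m}$ lies in an ideal contained in $p^m\mfa\O_{\wh{L}} + \mfa^{p^m}\O_{\wh{L}}$; both summands tend to zero in the (common) $\mfa$-adic and $p$-adic topologies on $\O_{\wh{L}}$, so the limit exists in this $p$-adically complete ring.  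The same kind of estimate shows that the limit is independent of the choice of lifts.

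To conclude, I would verify that $(\overline{x}_n)\mapsto (x_n)$ is a two-sided inverse.  Each $\widehat{x}_{n+m}^{p^m}$ reduces modulo $\mfa$ to $\overline{x}_n$, so the same is true of $x_n$; conversely, starting from an element $(x_n)\in\varprojlim_{x\mapsto x^p}\O_{\wh{L}}$ and using the tautological lift $\widehat{x}_n:=x_n$ makes the defining limit trivially equal to $x_n$.  Injectivity of the reduction map can also be read off directly: if every $x_n$ lies in $\mfa\O_{\wh{L}}$, then $x_n = x_{n+m}^{p^m}\in (\mfa\O_{\wh{L}})^{p^m}$ for every $m\ge 0$, and the topological hypothesis forces $\bigcap_m (\mfa\O_{\wh{L}})^{p^m}=0$.

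The main obstacle is the convergence bookkeeping in the construction of the inverse.  One must carefully track how the error ideal $p\b + \b^p$ shrinks under repeated $p$-th-power maps and then convert between the $\mfa$-adic and $p$-adic filtrations; the hypothesis that these two topologies coincide enters precisely here, guaranteeing both the $p$-adic convergence of the Frobenius-twisted limits and the vanishing needed for the injectivity argument.
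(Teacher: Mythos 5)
Your strategy coincides with the paper's, which cites the standard argument (\cite[Proposition~4.3.1]{BC09}) and sketches the construction of the inverse by taking limits of $p$-th powers of arbitrary lifts; the overall approach is fine, and so is your direct injectivity argument. However, one step of your convergence bookkeeping does not hold as stated. Iterating $a\equiv b \pmod{\mathfrak{c}} \Rightarrow a^p\equiv b^p\pmod{p\mathfrak{c}+\mathfrak{c}^p}$ starting from $\mathfrak{c}_0:=\mfa\O_{\wh{L}}$ produces the sequence $\mathfrak{c}_{m+1}=p\mathfrak{c}_m+\mathfrak{c}_m^p$, and these ideals contain mixed terms---already $\mathfrak{c}_2\supseteq p\mfa^p\O_{\wh{L}}$---that need not lie in $p^m\mfa\O_{\wh{L}}+\mfa^{p^m}\O_{\wh{L}}$. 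Concretely, if $p=3$ and $\mfa\O_{\wh{L}}$ is generated by an element of $p$-adic valuation $1/3$ (such ideals satisfy the hypothesis and do arise, e.g.\ as the ideal $\b$ of \eqref{bideal}), then $p\mfa^p\O_{\wh{L}}$ has generating valuation $2$, while $p^2\mfa\O_{\wh{L}}+\mfa^{p^2}\O_{\wh{L}}$ has generating valuation $7/3 > 2$, so the claimed inclusion fails.

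The conclusion $\mathfrak{c}_m\to 0$ is nevertheless correct, and a cleaner route is via valuations: set $\beta_m := \inf\{v(c) : c\in\mathfrak{c}_m\}$ with $v$ normalized so that $v(p)=1$. The hypothesis that the $\mfa$-adic and $p$-adic topologies agree forces $\beta_0 > 0$, and the recursion $\mathfrak{c}_{m+1}=p\mathfrak{c}_m+\mathfrak{c}_m^p$ gives $\beta_{m+1} \ge \min\bigl(1+\beta_m,\, p\beta_m\bigr)$, which makes $\beta_m$ strictly increasing and unbounded. With this repair, the remainder of your argument (multiplicativity from coordinatewise ring surjections, the two-sided-inverse verification using tautological lifts) goes through as written.
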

	
	
	\begin{proof}
		This is standard ({\em e.g.} \cite[Proposition~4.3.1]{BC09}): if $(x_n)_n\in \wt{\e}^+_{L/K}$,
		then for any choices of lifts $\wh{x}_n\in \O_L$ of $x_n$, our hypotheses
		on $\mfa$ ensure that the $\mfa$-adic and $p$-adic topologies on $\O_{\wh{L}}$
		coincide, so for each fixed $n$, the sequence $\{\wh{x}_{n+m}^{p^m}\}_m$
		converges to an element $y_n$ of $\O_{\wh{L}}$ that is independent
		of these choices with the resulting sequence $\{y_n\}_n$ compatible
		under the $p$-power map.  The association $x_n\rightsquigarrow y_n$
		gives the desired inverse mapping to (\ref{standard}).
		The rest of the Proposition follows easily.
	\end{proof}

The {\em strictness} hypothesis on the APF extension $L/K$ amounts to the
assumption that the quantities $i_n/[K_{n+1}:K_n]$ are bounded below by a positive constant,
and we define the constants \cite[1.2.1, 1.4.1]{Wintenberger}
\begin{equation}
	c(L/K_m):=\inf_{n \ge m} \frac{i_n}{[K_{n+1}:K_m]}.\label{cconst}
\end{equation}
Denote by $\b$ the ideal of $\O_L$ defined by
\begin{equation}
	\b:=\{\alpha\in \O_L\ :\ v_{K_1}(\alpha) \ge c(L/K_1)\}.\label{bideal}
\end{equation}
If $\mfa$ is any ideal of $\O_L$, then for any extension $E$ of $K_1$
contained in $L$ 
we write $\mfa_E:=\mfa\cap \O_E$ for the 
induced ideal of $\O_E$, which is visibly the kernel of the canonical map $\O_E\hookrightarrow \O_L\twoheadrightarrow
\O_L/\mfa$.  For any subfields $E'\supseteq E$ of $L$ containing $K$, we may (and do) therefore view $\O_E/\mfa_E$ as a subring of $\O_{E'}/\mfa_{E'}$.

\begin{definition}\label{normring}
	Let $L/K$ be an infinite and strictly APF extension, and let $\mfa\subseteq \O_L$ be any ideal such that $\mfa^N\subseteq \b\subseteq \mfa$ for some integer $N\ge 1$.   
	We define 
	\begin{equation*}
		\e_{L/K}^+(\mfa) := \left\{
			(x_n)\in \varprojlim_{x\mapsto x^p} \O_L/\mfa\ :\ 
			x_{r_m}\in \im\left(\O_{K_m}/\mfa_{K_m}\hookrightarrow \O_L/\mfa\right)\  \text{for all}\ m 
		\right\}
	\end{equation*}
	For ease of notation, we set 
	$\e_{L/K}^+:=\e_{L/K}^+(\b)$ and $\e_{L/K}:=\Frac(\e_{L/K}^+)$.
\end{definition}

	By definition, $\e_{L/K}^+$ is a subring of $\wt{\e}_{L/K}^+$---depending only on $L/K$---which, thanks to Lemma 
	\ref{Autaction}, is stable under the natural coordinate-wise action of $\Aut(L/K)$
	on $\wt{\e}_{L/K}^+$.    

	Let $X_K(L)$ be the imperfect norm field attached to $L/K$ as in \cite[\S{2.1}]{Wintenberger}.
	For each finite intermediate extension $E/K_1$, define 
	\begin{equation}
		r(E):=\left\lceil \frac{(p-1)}{p}i(L/E)\right\rceil\qquad\text{where}\qquad
		i(L/E):= \sup \{i\ge -1\ :\ G_E^iG_L = G_E\}.
	\end{equation}
	Due to \cite[2.2.3.1]{Wintenberger}, the integers $r(E)$ are nondecreasing and tend to infinity
	with respect to the directed set $\E_{L/K_1}$ of intermediate extensions $L\supseteq E\supseteq K_1$
	that are finite over $K_1$.
	Let $s:=\{s(E)\}_{E\in \E_{L/K_1}}$ be any fixed choice
	of positive integers $s(E)\le r(E)$ with $s(E)$ nondecreasing and tending to infinity
	(with respect to $\E_{L/K_1}$). Thanks to \cite[2.2.1, 2.2.3.3]{Wintenberger},
	the norm maps induce {\em ring homomorphisms} 
	$\Nm_{E'/E}: \O_{E'}/\mathfrak{m}_{E'}^{s(E')}\rightarrow \O_{E}/\mathfrak{m}_{E}^{s(E)}$
	by  and we put
	\begin{equation}
		A_K(L)_{s} := \varprojlim_{E\in \E_{L/K_1}} \O_E/\mathfrak{m}_E^{s(E)}.
	\end{equation}
	
	\begin{lemma}\label{sindep}
		Let $r:=\{r(E)\}_{E\in \E_{L/K_1}}$ and let $s:=\{s(E)\}_{E\in \E_{L/K_1}}$
		be any choice of positive integers as above.  Then the natural reduction map
		$A_K(L)_r\rightarrow A_K(L)_s$ is an isomorphism of rings; in particular,
		the ring $A_K(L):=A_K(L)_s$ is independent of the choice of $s$.  Moreover,
		$A_K(L)$ is canonically identified with the valuation ring of the norm field $X_K(L)$.
	\end{lemma}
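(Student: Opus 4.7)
The proof divides into showing the reduction map $A_K(L)_r\to A_K(L)_s$ is a bijective ring homomorphism, and then identifying $A_K(L)$ with the valuation ring of $X_K(L)$. By Wintenberger's results \cite[2.2.1, 2.2.3.3]{Wintenberger}, the norm $\Nm_{E'/E}$ descends to a ring homomorphism $\O_{E'}/\mathfrak{m}_{E'}^{t(E')}\to\O_E/\mathfrak{m}_E^{t(E)}$ for any admissible family $t$ (including both $r$ and $s$), so both source and target are rings and the reduction is a ring map.

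For injectivity, I would suppose $\{y_E\}\in A_K(L)_r$ reduces to zero, so $y_E\in \mathfrak{m}_E^{s(E)}/\mathfrak{m}_E^{r(E)}$ for each $E$. Fix $E$; for $E'\supseteq E$ large, compatibility gives $y_E=\Nm_{E'/E}(y_{E'})$ in $\O_E/\mathfrak{m}_E^{r(E)}$. Choosing a lift $\tilde{y}_{E'}\in\mathfrak{m}_{E'}^{s(E')}\subseteq\O_{E'}$, and using that residue degrees $f(E'/E)$ stabilize along the strictly APF tower, one obtains $v_E(\Nm_{E'/E}(\tilde{y}_{E'}))\geq f(E'/E)\,s(E')$, which exceeds $r(E)$ for $E'$ sufficiently large (as $s(E')\to\infty$), forcing $y_E=0$.

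For surjectivity, given $\{x_E\}\in A_K(L)_s$, I would define $y_E\in\O_E/\mathfrak{m}_E^{r(E)}$ as the stable value of $\Nm_{E'/E}(\tilde{x}_{E'})\bmod\mathfrak{m}_E^{r(E)}$ as $E'\to L$, where $\tilde{x}_{E'}\in\O_{E'}$ is any lift of $x_{E'}$. Stability is checked by writing, for $E''\supseteq E'\supseteq E$, $\Nm_{E''/E'}(\tilde{x}_{E''})=\tilde{x}_{E'}+\delta$ with $\delta\in\mathfrak{m}_{E'}^{s(E')}$ (using compatibility of $\{x_E\}$) and then expanding
\[
\Nm_{E''/E}(\tilde{x}_{E''})-\Nm_{E'/E}(\tilde{x}_{E'}) = \Nm_{E'/E}(\tilde{x}_{E'}+\delta)-\Nm_{E'/E}(\tilde{x}_{E'})
\]
as a sum over nonempty subsets of the embeddings $E'\hookrightarrow\overline{E}$, each term carrying a factor $\sigma(\delta)$ with $v_E(\sigma(\delta))\geq s(E')/e(E'/E)$. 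The strict APF hypothesis together with Wintenberger's choice $r(E)=\lceil(p-1)i(L/E)/p\rceil$ ensures this lower bound tends to infinity, giving stability. Functoriality of norms $\Nm_{E_1/E}\circ\Nm_{E_2/E_1}=\Nm_{E_2/E}$ then makes $\{y_E\}$ compatible in $A_K(L)_r$, and by construction $y_E\equiv x_E\bmod\mathfrak{m}_E^{s(E)}$.

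The identification $A_K(L)=\O_{X_K(L)}$ follows from Wintenberger's definition of $X_K(L)$ \cite[2.1.3, 2.3]{Wintenberger}, where the imperfect norm field is constructed so that its valuation ring is $\varprojlim_E\O_E$ (with norm transitions) and the ring structure is recovered precisely by passage to the quotients $\O_E/\mathfrak{m}_E^{r(E)}$; the tautological comparison is then a bijective ring homomorphism. The main obstacle is the stability claim in surjectivity: the quantitative estimate $v_E(\Nm_{E'/E}(\tilde{x}_{E'}+\delta)-\Nm_{E'/E}(\tilde{x}_{E'}))\to\infty$ is exactly what the APF hypothesis and the careful definition of $r(E)$ are engineered to provide.
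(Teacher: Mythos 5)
Your overall structure (injectivity plus surjectivity via a stable limit of norms of lifts, then citing Wintenberger for the final identification with $\O_{X_K(L)}$) is similar in spirit to the paper's argument, which directly constructs the two-sided inverse $(x_E)_E\mapsto(y_E)_E$ with $y_E=\varinjlim_{E'}\Nm_{E'/E}(\wh{x}_{E'})$ and cites Wintenberger's 2.3.2 for its existence and well-definedness. Your injectivity step is fine: since the fields $E\supseteq K_1$ are totally ramified over one another, one has $v_E(\Nm_{E'/E}(\tilde y_{E'}))\geq s(E')$, which does exceed the fixed threshold $r(E)$ for $E'$ large.

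However, the surjectivity argument has a genuine gap: the naive estimate $v_E(\sigma(\delta))\geq s(E')/e(E'/E)$ does \emph{not} tend to infinity, and in general does not even stay bounded below by $r(E)$. In the standard case where all steps $K_{n+1}/K_n$ are totally ramified of degree $p$, one has $r(E')\asymp C\cdot[E':K_1]$ and $e(E'/E)\asymp[E':E]$, so with the maximal choice $s(E')=r(E')$ the ratio $s(E')/e(E'/E)$ is asymptotically \emph{constant} in $E'$; and since $s$ is allowed to grow arbitrarily slowly (e.g.\ $s(E')\sim\log[E':K_1]$), the ratio can tend to zero. The correct proof of stability cannot be obtained by bounding each monomial of $\Nm_{E'/E}(\tilde x_{E'}+\delta)-\Nm_{E'/E}(\tilde x_{E'})$ separately: the whole point of Wintenberger's Lemmes 4.2.2.1 and 2.2.3 is that massive cancellation occurs among the non-diagonal terms (exploiting that $\sigma(\delta)-\delta$ has extra valuation due to high ramification), so that the norm map compresses congruences far more aggressively than the generic valuation count suggests. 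This is precisely the input that the paper imports as a black box from \cite[Prop.~2.3.1 and its proof 2.3.2]{Wintenberger}, and is not something one can deduce from the ``sum over nonempty subsets'' expansion alone. You should either invoke those lemmas of Wintenberger explicitly, or reproduce the finer estimate relating $v_E(\Nm_{E'/E}(1+\mathfrak m_{E'}^n)-1)$ to the Herbrand function $\psi_{E'/E}$ rather than merely to $e(E'/E)$.
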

	  
	\begin{proof}
	Given a norm compatible sequence $(x_E)_E \in \varprojlim \O_{E}/\mathfrak{m}_{E}^{s(E)}$,
	we choose for each $n$ an arbitrary lift $\wh{x}_E\in \O_{E}$ of $x_E$, and we set 
	$y_E:=\varinjlim_{E'\in \E_{L/E}} \Nm_{E'/E}(\wh{x}_{E'})$; this limit exists and is independent
	of the choices of lifts $\wh{x}_{E'}$ by the proof 2.3.2 of \cite[Proposition 2.3.1]{Wintenberger}.
	By construction, $(y_E\bmod \mathfrak{m}_{E}^{r(E)})_E$ is then a norm compatible sequence in 
	$\varprojlim_{\Nm} \O_{E}/\mathfrak{m}_{E}^{r(E)}$ lifting the given sequence $(x_E)_E$,
	and the association $(x_E)_E\mapsto (y_E)_E$ provides the desired inverse to the reduction
	map $A_K(L)_r\rightarrow A_K(L)_s$; see \cite[\S2.3]{Wintenberger} for further details.
	That $A_K(L)$ is canonically identified with the valuation ring of $X_K(L)$ is 
	\cite[Proposition 2.3.1]{Wintenberger}.
	\end{proof}

	By \ref{cconst}, we have
	$$i(L/K_n)=i(K_{n+1}/K_n) = [K_{n+1}:K_1]\frac{i(K_{n+1}/K_n)}{[K_{n+1}:K_1]} \ge [K_{n+1}:K_1]c(L/K_1),$$
	and it follows that $r(K_n) \ge \lceil(p-1) [K_n:K_1] c(L/K_1)\rceil$.  In particular,
	the sequence of positive integers $s(K_n):=\lceil [K_n:K_1] c(L/K_1)\rceil$ has $s(K_n)\le r(K_n)$,
	is nondecreasing and tends to infinity with $n$.  Furthermore, 
	as the value group of $v_{K_n}$ is $\Z$, we have by definition (\ref{bideal}) of $\b$	
	\begin{equation*}
		\b_{K_n} = \{x\in \O_{K_n}\ :\ v_{K_n}(x) \ge [K_n:K_1]c(L/K_1)\} = \mathfrak{m}_{K_n}^{s(K_n)}.
	\end{equation*}
	If $\mfa\subseteq \O_L$ is any ideal with $\mfa^N\subseteq \b\subseteq \mfa$, then
	for any $E\in \E_{L/K_1}$ 
	we have $\mfa_{E}^N\subseteq (\mfa^N)_{E}\subseteq \b_E\subseteq \mfa_E$,
	so defining integers $s_{\mfa}(E)$ by $\mfa_E=\mathfrak{m}_E^{s_{\mfa}(E)}$ we have
	$s_{\mfa}(K_n)\le s(K_n) \le Ns_{\mfa}(K_n)$ and the sequence $\{s_{\mfa}(K_n)\}_n$
	tends to infinity with $n$ and has $s_{\mfa}(K_n) \le r(K_n)$.  We claim that 
	the sequence $s_{\mfa}(K_n)$ is nondecreasing. 
To see this, let $E'/E$ denote any two fields in $\E_{L/K_1}$.  We then have
\[
\mathfrak{m}_E^{s_{\mfa}(E)} = \mfa_E = \mfa_{E'} \cap \O_E = \mathfrak{m}_{E'}^{s_{\mfa}(E')} \cap \O_E \supseteq (\mathfrak{m}_{E'} \cap \O_E)^{s_{\mfa}(E')} = \mathfrak{m}_E^{s_{\mfa}(E')};
\]
comparing the left and right sides shows that $s_{\mfa}(E) \leq s_{\mfa}(E')$, as desired.


	We thus obtain natural maps of rings
\begin{equation}	
	\xymatrix@C=35pt{
	{\varprojlim\limits_{E\in \E_{L/K},\Nm} \O_E/\mathfrak{m}_E^{r(E)}}\ar[r]^-{\text{forget}} & 
	{\varprojlim\limits_{n,\Nm} \O_{K_n}/\mathfrak{m}_{K_n}^{r(K_n)}} 
	\ar[r]^-{\bmod \mathfrak{m}_{\star}^{s_{\mfa}(\star)}} & {\varprojlim\limits_{n,\Nm} \O_{K_n}/\mathfrak{m}_{K_n}^{s_{\mfa}(K_n)}=
	\varprojlim\limits_{n,\Nm} \O_{K_n}/\mfa_{K_n}}
	}\label{altdesc}
\end{equation}

\begin{lemma}\label{prelimisom}
	The maps $(\ref{altdesc})$ are ring isomorphisms.
\end{lemma}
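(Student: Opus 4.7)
My plan is to prove each of the two maps is an isomorphism separately.

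For the forget map, I would invoke cofinality of the tower $\{K_n\}_n$ inside $\E_{L/K_1}$: since $L = \bigcup_n K_n$, every finite intermediate extension $E \in \E_{L/K_1}$ is contained in some $K_n$, so the tower is a cofinal subsystem of the directed set $\E_{L/K_1}$. The inverse should be constructed explicitly by sending $(x_{K_n})_n$ to the sequence $(x_E)_{E \in \E_{L/K_1}}$ defined by
\[
x_E := \Nm_{K_n/E}(x_{K_n}) \qquad \text{for any $n$ with $E \subseteq K_n$}.
\]
Transitivity of the norm ensures that $x_E$ is independent of the choice of $n$, and that the extended sequence is norm-compatible; the resulting map is a two-sided ring inverse, using that the norm maps are ring homomorphisms by Wintenberger's Proposition 2.2.1 (which applies thanks to $s(E) \leq r(E)$).

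For the reduction map, my plan is to re-run the argument of Lemma \ref{sindep} verbatim, this time applied to the tower subsystem $\{K_n\}_n$. The three required input hypotheses---that $\{s_{\mfa}(K_n)\}_n$ is nondecreasing, tends to infinity, and is bounded above by $\{r(K_n)\}_n$---have been established in the discussion immediately preceding the statement of the lemma. Given a norm-compatible sequence $(x_n) \in \varprojlim_n \O_{K_n}/\mfa_{K_n}$, I would choose arbitrary lifts $\wh{x}_n \in \O_{K_n}$ of each $x_n$ and form
\[
y_n := \lim_{m \to \infty} \Nm_{K_m/K_n}(\wh{x}_m) \in \O_{K_n}.
\]
By the convergence analysis in the proof of Wintenberger's Proposition 2.3.1, this limit exists in $\O_{K_n}$ and is independent of the chosen lifts; the reductions $(y_n \bmod \mathfrak{m}_{K_n}^{r(K_n)})_n$ then form a norm-compatible pre-image of $(x_n)$, giving the desired inverse.

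The one thing to check carefully is that Wintenberger's convergence argument, formulated in general for the full directed system $\E_{L/K_1}$, carries over to the tower. This should be essentially automatic, as the convergence depends only on the ramification jumps $i(L/K_n) \to \infty$---which is precisely the strict APF property and is already visible on the tower $\{K_n\}$---so no genuinely new input beyond what was used in Lemma \ref{sindep} is required.
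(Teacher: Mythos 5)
Your proposal is correct and follows essentially the same route as the paper's (very terse) proof: the forget map is an isomorphism by cofinality of $\{K_n\}$ in $\E_{L/K_1}$, and the reduction map is an isomorphism by running the argument of Lemma~\ref{sindep} on the tower, using the properties of $\{s_{\mfa}(K_n)\}$ established just before the lemma. You simply fill in the details---the explicit norm-compatible extension giving the inverse to the forget map, and the lift-and-limit construction giving the inverse to the reduction map---which the paper leaves implicit.
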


\begin{proof}
	The first is an isomorphism as $\{K_n\}$ is cofinal in the directed system $\E_{L/K_1}$
	of intermediate extensions $L/E/K$.
	That the second is an isomorphism follows from the proof of Lemma~\ref{sindep}.
	\end{proof}

\begin{corollary} \label{comparison with Wintenberger's norm ring}
	For any ideal $\mfa\subseteq \O_L$ with $\mfa^N\subseteq \b\subseteq \mfa$
	for some integer $N$, 
	there is a canonical isomorphism of rings $A_K(L)\simeq \e_{L/K}^+(\mfa)$.
	In particular, $\e_{L/K}^+(\mfa)\simeq \e_{L/K}^+$ and these rings all correspond to the same subring of $\wt{\e}^+$.  
\end{corollary}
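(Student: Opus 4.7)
The plan is to combine Lemma~\ref{prelimisom}, which gives a canonical ring isomorphism $A_K(L)\simeq \varprojlim_{n,\Nm}\O_{K_n}/\mfa_{K_n}$, with a direct comparison between this norm-indexed inverse limit and $\e_{L/K}^+(\mfa)$. I would define
\[
\Phi\colon \e_{L/K}^+(\mfa)\longrightarrow \varprojlim_{n,\Nm}\O_{K_n}/\mfa_{K_n}, \qquad (x_n)_n\longmapsto (x_{r_m})_m,
\]
where $x_{r_m}$ is interpreted inside $\O_{K_m}/\mfa_{K_m}$ via the defining condition of $\e_{L/K}^+(\mfa)$.

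The essential input for well-definedness of $\Phi$ is the following norm-to-power congruence extracted from Wintenberger's theory \cite[\S2.2]{Wintenberger}: because $s_{\mfa}(K_m)\le r(K_m)$, for any $y\in \O_{K_{m+1}}$ whose $[K_{m+1}:K_m]$-th power lies in the subring $\O_{K_m}/\mfa_{K_m}\subseteq \O_{K_{m+1}}/\mfa_{K_{m+1}}$, one has
\[
\Nm_{K_{m+1}/K_m}(y)\equiv y^{[K_{m+1}:K_m]}\pmod{\mfa_{K_m}}.
\]
Applied to $y=x_{r_{m+1}}$, together with $[K_{m+1}:K_m]=p^{r_{m+1}-r_m}$ and the identity $x_{r_{m+1}}^{p^{r_{m+1}-r_m}}=x_{r_m}\in \O_{K_m}/\mfa_{K_m}$ forced by the $p$-power compatibility of $(x_n)$, this yields norm-compatibility of $(x_{r_m})_m$, so $\Phi$ is a well-defined ring homomorphism.

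For the inverse, I would send a norm-compatible sequence $(y_m)_m$ to the sequence with $x_n:=y_m^{p^{r_m-n}}\in \O_{K_m}/\mfa_{K_m}\hookrightarrow \O_L/\mfa$, for any $m$ with $r_m\ge n$; independence of the choice of $m$ is another application of the same congruence combined with the norm-compatibility of $(y_m)$. The remaining verifications, namely that $x_n^p=x_{n-1}$, that $x_{r_m}\in \O_{K_m}/\mfa_{K_m}$, and that $\Phi$ and the map just defined are mutually inverse ring homomorphisms, are then formal.

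For the ``in particular'' assertion, under the canonical identifications $\wt{\e}^+_{L/K}\simeq \wt{\e}^+_{L/K}$ arising from Proposition~\ref{independent of mfa} for different choices of $\mfa$, the defining condition ``$x_{r_m}$ comes from $\O_{K_m}/\mfa_{K_m}$'' corresponds to the $\mfa$-independent condition that the associated element $y_{r_m}\in \O_{\wh{L}}$ actually lies in $\O_{K_m}$ (using that $\O_{K_m}$ is closed in $\O_{\wh{L}}$), so all of the subrings $\e_{L/K}^+(\mfa)$ coincide inside $\wt{\e}^+_{L/K}\subseteq \wt{\e}^+$. The main obstacle in this plan is the precise calibration of the norm-to-power congruence above: although morally a direct consequence of Wintenberger's norm field machinery, matching the truncation level $s_{\mfa}(K_m)$ with the bound $r(K_m)$ and the ramification invariants controlling $\Nm_{K_{m+1}/K_m}(y)-y^{[K_{m+1}:K_m]}$ requires careful bookkeeping via \cite[\S2.2--2.3]{Wintenberger}.
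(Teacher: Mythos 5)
Your proposal follows essentially the same route as the paper: both reduce, via Lemma~\ref{prelimisom}, to norm-compatible sequences indexed by the elementary fields $K_m$, and both rest on the congruence between $\Nm_{K_{m+1}/K_m}$ and the $[K_{m+1}:K_m]$-th power map modulo $\mfa$.  Two small remarks.  First, the extra hypothesis you put on the congruence (that $y^{[K_{m+1}:K_m]}$ already lies in the image of $\O_{K_m}/\mfa_{K_m}$) is unnecessary and is a symptom of phrasing it as a congruence "in $\O_{K_m}/\mfa_{K_m}$'' when the two sides naturally live in different quotients; the paper avoids this by stating the sharper fact that $\Nm_{K_{n+1}/K_n}(\alpha)-\alpha^{[K_{n+1}:K_n]}\in\b\subseteq\mfa$ for \emph{every} $\alpha\in\O_{K_{n+1}}$, so the difference is an element of $\O_L$ and the required identification is automatic.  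Second, the step you defer as "careful bookkeeping'' is precisely the technical heart: the paper proves $\Nm_{K_{n+1}/K_n}(\alpha)-\alpha^{[K_{n+1}:K_n]}\in\b$ by chaining $v_{K_n}(\cdot)\ge c(K_{n+1}/K_n)\ge c(L/K_n)\ge c(L/K_1)[K_n:K_1]$, using 4.2.2.1 and 1.2.3(iv) of Wintenberger together with the defining infimum of $c(L/K_m)$ in~(\ref{cconst}); once that is done, the rest of the argument (cofinality of $\{r_m\}$, the subring chain $\e_{L/K}^+(\b)\subseteq\e_{L/K}^+(\mfa)$, and Lemma~\ref{prelimisom}) is the same bookkeeping you describe, just organized via an inclusion of subrings of $\wt\e^+$ rather than an explicit mutually inverse pair $\Phi,\Phi^{-1}$.
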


\begin{proof}
	As the sequence $\{r_m\}$ is cofinal in $\N$, the forgetful map induces an isomorphism
	of rings 
	\begin{equation}
		\e_{L/K}^+(\mfa)\simeq \{(x_n)\ :\ x_n\in \O_{K_n}/\mfa_{K_n},\ \text{and}\ x_n^{[K_n:K_{n-1}]}=x_{n-1}\ \text{inside}\ 
		\O_{L}/\mfa\}
		\label{altdesc2}.
	\end{equation}
	On the other hand, we claim that $\Nm_{K_{n+1}/K_{n}}(\alpha) - \alpha^{[K_{n+1}:K_{n}]}\in \b \subseteq \mfa$ for all $\alpha\in \O_{K_{n+1}}$, so that the right sides of (\ref{altdesc}) and (\ref{altdesc2})
	are equal.  In other words, we claim that
\[
v_{K_1}\left( \Nm_{K_{n+1}/K_{n}}(\alpha) - \alpha^{[K_{n+1}:K_{n}]} \right) \geq c(L/K_1).
\]
This follows from 
\[
			v_{K_n}\left(\Nm_{K_{n+1}/K_{n}}(\alpha) - \alpha^{[K_{n+1}:K_{n}]}\right) \ge c(K_{n+1}/K_n) \ge c(L/K_n) \ge
			c(L/K_1) [K_n:K_1],
\]
with the first two inequalities given by 4.2.2.1 and 1.2.3 (\rmnum{4}) of \cite{Wintenberger}, respectively, and the final inequality
	following immediately from the definition (\ref{cconst}) of $c(L/K_m)$.  This completes the proof of the claim.  

We have subrings
\[
\e_{L/K}^+(\mathfrak{b}) \subseteq \e_{L/K}^+(\mathfrak{a}) \subseteq \wt{\e}^+,
\]
with the first inclusion given by reduction modulo~$\mathfrak{a}$ in each coordinate.  On the other hand, as observed in the previous paragraph, these first two rings are equal to the rings $A_K(L)_{s_{\mathfrak{b}}}$ and $A_K(L)_{s_{\mathfrak{a}}}$, respectively.  We then in fact have
\[
\e_{L/K}^+(\mathfrak{b}) = \e_{L/K}^+(\mathfrak{a}) \subseteq \wt{\e}^+,
\]
 by Lemma~\ref{prelimisom}.
\end{proof}


	If $K'\in \E_{L/K}$ if any finite extension of $K$ contained in $L$, then
	as the finite subextensions
	of $L/K'$ and $L/K$ are co-final among each other,
	there is a canonical isomorphism of abstract rings $A_K(L)\simeq A_{K'}(L)$
	{\em cf.} \cite[2.1.4]{Wintenberger}.
	We warn the reader, however, that the embedding of $A_K(L)$
	into $\wt{\e}^+$ described in \cite[4.2]{Wintenberger}---whose image is exactly $\e_{L/K}^+$---is sensitive to $K$. 
	More precisely:
	
\begin{proposition}\label{pinsep}
Let $K'$ be a finite extension of $K$ contained in $L$.  Let $K_1$ $($resp., $K'_1$$)$ be the maximal tamely ramified subextension of $L/K$ $($resp., $L/K'$$)$.  
Considered as subfields of $\wt{\e}$ via the embedding described in \cite[4.2]{Wintenberger}, we have that $X_{K'}(L)$ is a purely inseparable extension of $X_{K}(L)$ of degree $[K'_1 : K_1]$.
In particular, the group $\Aut(L)=\Aut(L/\Q_p)$ acts naturally on $X_{K'}(L)$.
\end{proposition}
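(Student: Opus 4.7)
The plan is to show that, inside $\wt{\e}^+$, a uniformizer of $X_K(L)$ is the $p^d$-th power of a uniformizer of $X_{K'}(L)$, where $p^d := [K'_1 : K_1]$; this immediately yields the purely inseparable extension of the claimed degree. As a preliminary, I would verify that $K_1 \subseteq K'_1$ and that $[K'_1:K_1]$ is a power of $p$. The inclusion follows because $K_1/K$ is tame, so the base change $K_1 K'/K'$ is also tame and is therefore contained in the maximal tame sub-extension $K'_1$ of $L/K'$. For the $p$-power assertion, both $K_1$ and $K'_1$ have residue field $k_L$, so $[K'_1:K_1]$ equals the ramification index $e(K'_1/K_1)$; any nontrivial tame sub-extension of $K'_1/K_1$ would, composed with $K_1/K$, produce a tame sub-extension of $L/K$ strictly larger than $K_1$, contradicting the maximality of $K_1$.

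Next, I would compare the towers of elementary extensions of $L/K$ and $L/K'$. Since $G_{K'}$ is open in $G_K$ and the ramification breaks $b_m$ of $L/K$ tend to infinity, for all sufficiently large $m$ we have $G_K^{b_m}\subseteq G_{K'}$; by Herbrand's theorem applied to $K'/K$, each such $G_K^{b_m}$ coincides with $G_{K'}^{b'}$ for a corresponding ramification break $b'$ of $L/K'$. Taking fixed fields then gives $K_m = K'_{m-c}$ for large $m$ and a fixed shift $c \ge 0$. Combined with the multiplicativity $[K_m:K_1] = [K_m:K'_1]\cdot[K'_1:K_1]$, this yields $r_m = r'_{m-c} + d$, where $p^{r_m} = [K_m:K_1]$ and $p^{r'_{m'}} = [K'_{m'}:K'_1]$.

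Using Corollary \ref{comparison with Wintenberger's norm ring}, I would realize uniformizers $\underline{\xi}_K \in \e_{L/K}^+$ and $\underline{\xi}_{K'} \in \e_{L/K'}^+$ in $\wt{\e}^+$ that correspond to norm-compatible sequences $(\pi_m)$ and $(\pi'_{m'})$ of uniformizers in $\O_{K_m}$ and $\O_{K'_{m'}}$; the coordinate of $\underline{\xi}_K$ at position $r_m$ is $\pi_m \bmod \b$, and the remaining coordinates are determined by the $p$-power relation. I would arrange the sequences compatibly by setting $\pi'_{m-c} := \pi_m$ for all $m$ sufficiently large (both are uniformizers of $K_m = K'_{m-c}$), then extending norm-compatibly downward in the $L/K'$-tower. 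At position $r'_{m-c} = r_m - d$, the coordinate of $\underline{\xi}_K$ equals $(\underline{\xi}_K)_{r_m}^{p^d} = \pi_m^{p^d}$ (obtained by iterating the relation $x_n^p = x_{n-1}$ a total of $d$ times), while that of $\underline{\xi}_{K'}$ is $\pi'_{m-c} = \pi_m$. Hence $\underline{\xi}_K$ and $\underline{\xi}_{K'}^{p^d}$ agree at the cofinal collection of positions $\{r_m\}_{m \gg 0}$, so they coincide in $\wt{\e}^+$.

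Inside $\wt{\e}$ we therefore have $X_K(L) = k_L(\!(\underline{\xi}_K)\!) = k_L(\!(\underline{\xi}_{K'}^{p^d})\!)$ and $X_{K'}(L) = k_L(\!(\underline{\xi}_{K'})\!)$, so $X_{K'}(L)/X_K(L)$ is purely inseparable of degree $p^d = [K'_1:K_1]$, as claimed. For the assertion that $\Aut(L)$ acts on $X_{K'}(L)$, I would apply the conclusion just established with $\Q_p$ in place of $K$: $X_{K'}(L)/X_{\Q_p}(L)$ is purely inseparable of some $p$-power degree $p^{d'}$, so $X_{K'}(L)$ is the unique sub-extension of that degree inside the purely inseparable closure $X_{\Q_p}(L)^{1/p^\infty}$ of $X_{\Q_p}(L)$ in the perfect ring $\wt{\e}$. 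Since Lemma \ref{Autaction} applied to $L/\Q_p$ shows $X_{\Q_p}(L)$ is $\Aut(L/\Q_p) = \Aut(L)$-stable, so is its purely inseparable closure, and by uniqueness $X_{K'}(L)$ is preserved by $\Aut(L)$. The principal obstacle I anticipate is the bookkeeping needed to set up and maintain norm-compatible sequences of uniformizers in both towers simultaneously, so that the shift $c$ and the $p^d$-power relation are realized cleanly at the level of Wintenberger embeddings.
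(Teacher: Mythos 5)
Your proof is correct and takes essentially the same route as the paper: embed a common norm-compatible sequence of uniformizers into $\wt{\e}$ via Wintenberger's map for both towers and compare exponents to obtain $(x')^{[K'_1:K_1]} = x$. The paper reaches this more quickly by invoking the explicit Wintenberger formula $\lim_E \pi_E^{p^{-n}[E:K_1]}$ directly rather than matching elementary towers via Herbrand functions, and it dispatches the $\Aut(L)$-action assertion in one terse line where you supply a tidier justification via uniqueness of the degree-$p^{d'}$ purely inseparable subextension of $X_{\Q_p}(L)$ inside the perfect ring $\wt{\e}$.
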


\begin{proof}
As abstract rings, both $X_{K}(L)$ and $X_{K'}(L)$ are isomorphic to $k(\!(x)\!)$ by \cite[Th\'eor\`eme~2.1.3]{Wintenberger}.  Let $(\pi_E)_E$ denote a uniformizer for either $X_K(L)$ or $X_{K'}(L)$; we may choose the same elements $\pi_E$ in both cases, although the index sets differ.  Then the map in \cite[4.2]{Wintenberger} sends this element to the $p$-power compatible sequence with $n$-th term
\[
\lim_{E,\,[E:K_1] >\!> 0} \pi_E^{p^{-n} [E:K_1]} \qquad \text{ or } \qquad \lim_{E,\,[E:K'_1] >\!> 0} \pi_E^{p^{-n} [E:K'_1]}.
\]
Thus we have subfields $k(\!(x)\!) \subseteq k(\!(x')\!) \subseteq \wt{\e}$, and clearly $(x')^{[K'_1:K_1]} = x.$  Because $K'_1$ is a subextension of the totally wildly ramified extension $L/K_1$, we have that $[K'_1 : K_1]$ is a power of $p$.  The final assertion follows by taking $K=\Q_p$ and $L/K'$ arbitrary strictly APF
and the fact that $\Aut(L)$ acts naturally on $\e_{L/\Q_p}$ by the preceding constructions and discussion.
This completes the proof. 
\end{proof}

\section{The ring \texorpdfstring{$\ALK$}{ALK}} \label{main section Cohen}

In Section~\ref{Witt constructions section}, we described the ring $\wt{\a}^+_F$ using inverse limits of Witt vectors.    In this section, we will use this formulation to construct canonical rings $\ALK$ sitting inside of $\wt{\a}^+_F$ for any infinite strictly APF extension $L/K$ with $K \supseteq F$.  (We remark that our ring $\ALK$ depends on $F, \pi$ as in Notation~\ref{K notation}; this dependence is suppressed from our notation.)  We show in this section that the ring $\ALK$ is always $\pi$-adically complete and separated, and always has residue ring contained within $\e_{L/K}^+$.  In the special case that $L/K$ is a $\varphi$-iterate extension (Definition \ref{iterated def}), we prove in Section~\ref{Sec:lift} that $\e_{L/K}$ is a finite purely inseparable extension of $\Frac \left(\ALK/\pi\ALK\right)$. 

The ring $\wt{\e}_{L/K}^+$ of Proposition~\ref{independent of mfa} is a perfect ring of characteristic~$p$, so the classical theory of Witt vectors suffices to lift it to a Cohen ring.  In the next proposition, we put this classical description in our ``Witt inverse limit'' framework.

\begin{proposition} \label{perfect lift}
	Let $L/K$ be an infinite strictly APF extension, 
	Then there is a canonical identification 
	\begin{equation}
		\wt{\a}_{L/K}^+:=W_{\pi}(\wt{\e}_{L/K}^+) = \varprojlim_{\Fr} W_{\pi}(\O_{\wh{L}}).
	\end{equation}
\end{proposition}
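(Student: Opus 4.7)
The plan is to mirror the proof of Proposition~\ref{prop: Fontaine}, replacing $\O_{\c_F}$ with $\O_{\wh{L}}$ throughout. All of the ingredients of that proof remain available here: $\O_{\wh{L}}$ is $\pi$-adically complete and separated, $\pi$-torsion free, and its reduction modulo $\pi$ is a $k_F$-algebra (since $p\in \pi\O_F$) equipped with a $q$-power Frobenius endomorphism. Concretely, I would construct canonical ring homomorphisms
\begin{equation*}
\varprojlim_{\Fr} W_{\pi}(\O_{\wh{L}}) \xrightarrow{\varpi} \varprojlim_{\Fr} W_{\pi}(\O_{\wh{L}}/(\pi)) \xrightarrow{\alpha} W_{\pi}\!\left(\varprojlim_{x \mapsto x^q} \O_{\wh{L}}/(\pi)\right),
\end{equation*}
with $\varpi$ induced by functoriality of $W_{\pi}$ from reduction modulo $\pi$, and $\alpha$ defined by the re-indexing $\underline{x} \mapsto (\underline{y}_0, \underline{y}_1, \ldots)$ with $\underline{y}_i := (x_{1,i}, x_{q,i}, x_{q^2,i}, \ldots)$. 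The verification that both maps are isomorphisms is verbatim the corresponding argument in Proposition~\ref{prop: Fontaine}: injectivity of $\varpi$ follows by induction on $\pi$-adic valuation using the ghost-component identity $x_{q^i 0} = \sum_{j=0}^n \pi^j x_{q^{i+n},j}^{q^{n-j}}$; surjectivity uses Lemma~\ref{p-powers and valuations} to see that $q^k$-th powers of arbitrary lifts converge $\pi$-adically in the complete ring $\O_{\wh{L}}$; and $\alpha$ is an evident bijection of underlying sets which is a ring homomorphism because modulo $\pi$ the Witt vector Frobenius acts as the coordinate-wise $q$-power map.

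Next, I would identify $\varprojlim_{x \mapsto x^q} \O_{\wh{L}}/(\pi)$ with $\wt{\e}_{L/K}^+$. Since $p \in \pi\O_{\wh{L}}$, the $\pi$-adic and $p$-adic topologies on $\O_{\wh{L}}$ coincide, so by Proposition~\ref{independent of mfa} applied with $\mfa=(\pi)$ we have $\wt{\e}_{L/K}^+ = \varprojlim_{x \mapsto x^p} \O_{\wh{L}}/(\pi)$. Writing $q = p^a$, for any $\F_p$-algebra $R$ the assignment $(y_j)_j \mapsto (y_{aj})_j$ gives a bijection $\varprojlim_{x \mapsto x^p} R \simeq \varprojlim_{x \mapsto x^q} R$, with inverse obtained by interpolating via $y_{ai+k} := x_{i+1}^{p^{a-k}}$ for $0\le k < a$; both maps are manifestly compatible with the coordinate-wise ring structures. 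Applied with $R = \O_{\wh{L}}/(\pi)$, this yields the desired canonical $k_F$-algebra isomorphism.

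Combining these steps produces the canonical chain of isomorphisms
\begin{equation*}
\varprojlim_{\Fr} W_{\pi}(\O_{\wh{L}}) \simeq W_{\pi}\!\left(\varprojlim_{x \mapsto x^q} \O_{\wh{L}}/(\pi)\right) \simeq W_{\pi}(\wt{\e}_{L/K}^+) =: \wt{\a}_{L/K}^+,
\end{equation*}
as asserted. The main subtlety, and the one point I expect to require the most care, is to verify that this identification is compatible with that of Proposition~\ref{prop: Fontaine} under the inclusion $\O_{\wh{L}} \hookrightarrow \O_{\c_F}$, so that $\wt{\a}_{L/K}^+$ is naturally realized as the subring $W_{\pi}(\wt{\e}_{L/K}^+) \subseteq W_{\pi}(\wt{\e}^+) = \wt{\a}^+_F$ and the claim that the identification is \emph{canonical} is justified. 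This compatibility follows by naturality of each of $\varpi$, $\alpha$, and the $p$-power/$q$-power re-indexing in the base ring, applied to the inclusion $\O_{\wh{L}} \hookrightarrow \O_{\c_F}$.
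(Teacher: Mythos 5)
Your proposal is correct and takes essentially the same approach as the paper, which itself simply says that the proof of Proposition~\ref{prop: Fontaine} adapts verbatim once one replaces $\O_{\c_F}$ by $\O_{\wh{L}}$ (completeness being the key reason to use $\O_{\wh{L}}$ rather than $\O_L$). You have merely spelled out the adaptation in full detail, including the $p$-power versus $q$-power reindexing and the compatibility with the embedding into $\wt{\a}^+_F$, both of which the paper leaves implicit.
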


\begin{proof}
It is straightforward to adapt the proof of Proposition~\ref{prop: Fontaine}.  Note that it is important to work with $\O_{\wh{L}}$ instead of $\O_L$, because we require $\pi$-adic completeness. 
\end{proof}

\begin{remark}
	Note that $\wt{\e}_{L/K}^+$ and $\wt{\a}_{L/K}^+$ are independent of $K$
	in the sense that for any finite extension $K'/K$ contained in $L$ we have
	$\wt{\e}_{L/K'}^+=\wt{\e}_{L/K}^+$ and $\wt{\a}_{L/K'}^+=\wt{\a}_{L/K}^+$.
\end{remark}

The following is the key definition of this paper.

\begin{definition} \label{new a def}
Let $F, \pi, q$ be as in Notation~\ref{K notation}. Let $K/F$ denote a finite extension and let $L/K$ be a strictly APF extension.  Let $\{K_m\}_{m\ge 0}$ be the tower of elementary extensions of $L/K$ defined in Section~\ref{Sec: norm}.  We define
	\begin{equation*}
\begin{split}
		\ALK := \left\{(\u{x}_{q^i})_i\in \varprojlim_{\Fr} W_{\pi}(\O_{\wh{L}})\ :\ 		
\u{x}_{q^j}\in W_{\pi}(\O_{K_m})\ \text{whenever}\ 
			q^j \mid  [K_m:K_1]		
	\right\},	
\end{split}
	\end{equation*}
	viewed as a subring of $\wt{\a}^+_{L/K}\subseteq \wt{\a}^+_F$ via Proposition \ref{perfect lift}.
\end{definition}

\begin{remark}
As observed in Lemma~\ref{Autaction}, the elementary fields $K_m$ are stable under the action of
$\Aut(L/K)$ on $L$, so the ring $\ALK$ carries a natural action of $\Aut(L/K)$.
In particular, if $L/K$ is Galois, then $\ALK$ is a $G_K$-stable subring of $\wt{\a}_F^+$
that is fixed element-wise by $G_L$.
\end{remark}

Because $W_{\pi}(\O_E)$ is always an $\O_F$-algebra for any intermediate field $K \subseteq E \subseteq L$ and because the Witt vector Frobenius $\Fr$ is an $\O_F$-algebra homomorphism, we find that $\a_{L/K}^+$ is an $\O_F$-algebra.

Recall that the weak topology on $\wt{\a}^+ _F\cong \varprojlim W_{\pi}(\O_{\c_F})$ was described in Remark~\ref{weak topology remark}.  

\begin{proposition} \label{ALK is weakly closed} 
The $\O_F$-subalgebra $\ALK$ of $\wt{\a}^+_F$ is closed for the weak topology.
\end{proposition}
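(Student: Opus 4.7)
The plan is to exhibit $\ALK$ as an intersection of preimages of closed subsets under continuous Witt-coordinate projections. For each pair of integers $i, k \geq 0$, consider the map
\[
p_{i,k}\colon \wt{\a}^+_F = \varprojlim_{\Fr} W_\pi(\O_{\c_F}) \longrightarrow \O_{\c_F}, \qquad (\u{x}_{q^j})_{j \geq 0} \longmapsto x_{q^i, k},
\]
where $x_{q^i, k}$ denotes the $k$-th Witt component of the Witt vector $\u{x}_{q^i}$.  Granting that each $p_{i, k}$ is continuous when $\O_{\c_F}$ carries its $\pi$-adic (equivalently, valuation) topology, Definition~\ref{new a def} can be rewritten as
\[
\ALK = \bigcap_{i, k \geq 0} p_{i,k}^{-1}\bigl(\O_{\wh{L}}\bigr) \;\cap \bigcap_{\substack{i,\,k,\,m \geq 0 \\ q^{i}\,\mid\,[K_m : K_1]}} p_{i,k}^{-1}\bigl(\O_{K_m}\bigr).
\]
The subring $\O_{\wh{L}}\subseteq \O_{\c_F}$ is closed because $\wh{L}$ is a complete (hence closed) subfield of $\c_F$, and each $\O_{K_m}$ is closed because $K_m$ is a finite-dimensional extension of the complete field $K$.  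Thus, once the $p_{i, k}$ are shown to be continuous, all the preimages above are closed in the weak topology and so is their intersection.

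The main work is to establish continuity of the projections $p_{i,k}$.  For any $n \geq 0$, the Frobenius compatibility $\Fr^n(\u{x}_{q^{i+n}}) = \u{x}_{q^i}$ together with iterated application of Lemma~\ref{Frobenius polynomials} yields a universal polynomial expression
\[
x_{q^i, k} = F_{n, k}\bigl(x_{q^{i+n}, 0},\, x_{q^{i+n}, 1},\, \ldots,\, x_{q^{i+n}, k+n}\bigr)
\]
with $F_{n, k} \in \O_F[y_0, \ldots, y_{k+n}]$.  I would then prove, by induction on $k$ using the standard recursion expressing Witt components in terms of ghost components together with Lemma~\ref{p-powers and valuations}, the key perturbation estimate
\[
F_{n, k}(\u{y} + \pi\u{z}) \equiv F_{n, k}(\u{y}) \pmod{\pi^{n+1}}
\]
for all Witt vectors $\u{y}, \u{z} \in W_\pi(\O_{\c_F})$.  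The base step comes from the $q$-th power estimate of Lemma~\ref{p-powers and valuations} applied to each term of the ghost component $w_{n+k}(\u{y})$, and the inductive step passes the congruence through the polynomial expressing the $k$-th Witt component in terms of the first $k$ ghost components and the lower Witt components.

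It follows from the perturbation estimate that $x_{q^i, k} \bmod \pi^{n+1}$ depends only on the mod-$\pi$ reductions $\bar{x}_{q^{i+n}, 0}, \ldots, \bar{x}_{q^{i+n}, k+n} \in \O_{\c_F}/(\pi)$.  Each of these is a continuous function of $(\u{x}_{q^j})_j$ by the very definition of the weak topology recalled in Remark~\ref{weak topology remark}, and $F_{n, k}$ is a polynomial with $\O_F$-coefficients, so $p_{i, k}$ is continuous into $\O_{\c_F}/(\pi^{n+1})$ for every $n$, and therefore continuous into $\O_{\c_F}$ in its $\pi$-adic topology.  The only genuine computation is the ghost-component perturbation estimate; given the formalism developed in Section~\ref{Witt constructions section}, this amounts to routine bookkeeping of powers of $\pi$, and I do not anticipate any further obstacle.
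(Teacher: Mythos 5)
Your proposal is correct and follows essentially the same route as the paper: there, the complement of $\ALK$ is shown to be open by taking $\underline{x}$ outside $\ALK$, locating a Witt component $x_{q^i j} \notin \O_{K_m}$ with $q^i \mid [K_m:K_1]$, and using closedness of $\O_{K_m}$ in $\O_{\c_F}$ to produce a weakly open neighborhood of $\underline{x}$ inside the complement, which is the dual formulation of your ``intersection of closed preimages'' description. The one place you are more explicit than the paper is in isolating and justifying the continuity of the Witt-coordinate projections $p_{i,k}$ via the perturbation estimate — the paper takes this for granted, the supporting computation being the $\pi$-adic convergence established in the surjectivity part of the proof of Proposition~\ref{prop: Fontaine} — and your estimate is a clean way to package it, though the more natural induction for it is on the Frobenius exponent $n$ (using Lemma~\ref{Frobenius polynomials} and Lemma~\ref{p-powers and valuations}) rather than on the Witt index $k$, even if the ghost-component route also goes through.
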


\begin{proof}
We will show that the complement is open.  Let $\underline{x} = (\underline{x}_{q^i})$ denote an element in the complement.  Then there exist some indices $i,j$ such that $q^i \mid [K_m: K_1]$ and $x_{q^i j} \not\in \O_{K_m}$.  Let $n$ be such that there is no element of $\O_{K_m}$ congruent to $x_{q^i j}$ modulo $\pi^n \O_{\c_F}$.  The collection of all elements $\underline{y} \in \varprojlim W_{\pi}(\O_{\c_F})$ such that $v_{\pi}(y_{q^i j} - x_{q^i j}) > n$ is an open set containing $\underline{x}$ and contained in the complement of $\ALK$.  This completes the proof.
\end{proof}

\begin{proposition} \label{image of beta}
Recall that $\beta: \varprojlim_{\Fr} W_{\pi}(\O_{\c_F}) \rightarrow \wt{\e}^+$ was defined in Definition~$\ref{def: beta}$.  The induced map $\beta: \ALK \rightarrow \wt{\e}^+$ has image contained in $\e_{L/K}^+$.
\end{proposition}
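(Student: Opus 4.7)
The plan is to trace the identifications underlying $\beta$ and produce a norm-compatible sequence on the elementary tower realizing $\beta(\underline{x})$ inside $\e_{L/K}^+(\mathfrak{b})$. Under Propositions~\ref{prop: Fontaine} and~\ref{perfect lift}, and as described in Remark~\ref{betamapexplicit}, $\beta$ corresponds to projection onto the first Witt component followed by reduction modulo $\pi$. Thus for $\underline{x} = (\underline{x}_{q^i})_i \in \ALK$, the image $\beta(\underline{x})$ is identified with the $q$-power compatible sequence $(\bar{x}_{q^i, 0})_i \in \varprojlim_{x \mapsto x^q} \O_{\wh{L}}/(\pi)$. Since $\ALK$ is contained in $\varprojlim_{\Fr} W_{\pi}(\O_{\wh{L}})$, the Witt components $x_{q^i, 0}$ all lie in $\O_{\wh{L}}$, so this sequence lies automatically in $\wt{\e}^+_{L/K} \subseteq \wt{\e}^+$; what remains is to verify the additional condition cutting out $\e_{L/K}^+(\mathfrak{b}) \subseteq \wt{\e}^+_{L/K}$.

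By Corollary~\ref{comparison with Wintenberger's norm ring}, $\e_{L/K}^+(\mathfrak{b})$ is canonically identified with the set of sequences $(x_m)_m$ having $x_m \in \O_{K_m}/\mathfrak{b}_{K_m}$ and satisfying $x_m^{[K_m : K_{m-1}]} \equiv x_{m-1} \pmod{\mathfrak{b}}$. For each $m \ge 1$, set $j_m := \lfloor r_m / a \rfloor$ so that $q^{j_m}$ is the largest power of $q$ dividing $[K_m : K_1] = p^{r_m}$. The defining condition of $\ALK$ then forces $\underline{x}_{q^{j_m}} \in W_{\pi}(\O_{K_m})$, and in particular the first Witt component $x_{q^{j_m}, 0}$ lies in $\O_{K_m}$. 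Combining the Frobenius compatibility $\Fr(\underline{x}_{q^{j+1}}) = \underline{x}_{q^j}$ with Lemma~\ref{Frobenius polynomials}, which modulo $\pi$ raises the first Witt component to the $q$-th power, yields (after iterating $j_{m+1} - j_m$ times) the key relation $x_{q^{j_{m+1}}, 0}^{q^{j_{m+1} - j_m}} \equiv x_{q^{j_m}, 0} \pmod{\pi}$, which is the candidate source of a norm-compatible system.

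The main obstacle is the case where $a \nmid r_m$ for some $m$: the $q$-power indexing of the Witt vector inverse system and the $p$-power indexing of the norm-compatibility condition do not match directly, so the natural choice $x_m := x_{q^{j_m}, 0}$ must be adjusted by a $p^{s_m}$-th power (where $s_m := r_m - a j_m \in \{0, 1, \ldots, a-1\}$) to correct for this mismatch. The verification of norm-compatibility then hinges on the exponent identity $r_{m+1} - r_m = a(j_{m+1} - j_m) + (s_{m+1} - s_m)$, together with the perfectness of $\wt{\e}^+_{L/K}$ (so that intermediate $p$-th roots are uniquely determined) and the fact that $\mathfrak{b}$ is large enough to absorb the discrepancy between Witt-vector Frobenius raised to the $q$-th power and genuine $q$-th powers of elements. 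Once the norm-compatible system $(x_m)_m$ is constructed and the compatibility is verified modulo $\mathfrak{b}$, Corollary~\ref{comparison with Wintenberger's norm ring} produces the corresponding element of $\e_{L/K}^+(\mathfrak{b}) \subseteq \wt{\e}^+_{L/K}$, and one checks it maps to $\beta(\underline{x})$ under the identification, completing the proof.
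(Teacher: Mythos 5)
You've chosen the right framework — reduce via Corollary~\ref{comparison with Wintenberger's norm ring} and use the defining condition of $\ALK$ to place $\overline{x}_{q^{j_m},\,0}$ (with $j_m:=\lfloor r_m/a\rfloor$) inside $\O_{K_m}/(\pi)$ — and this is exactly how the paper opens. But your argument does not close. First, you work with $\mathfrak{b}$ throughout, whereas $\beta$ only supplies data modulo $\pi$; for $m$ large the ideal $\mathfrak{b}_{K_m}=\mathfrak{m}_{K_m}^{s(K_m)}$ is strictly contained in $\pi\O_{K_m}$, so a congruence modulo $\pi$ cannot by itself certify the membership condition $x_m\in\O_{K_m}/\mathfrak{b}_{K_m}$. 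The paper resolves this at the outset by replacing $\mathfrak{b}$ with $\mathfrak{a}:=(\mathfrak{b},\pi)$, which is legitimate by Corollary~\ref{comparison with Wintenberger's norm ring} (it shows $\e_{L/K}^+(\mathfrak{a})=\e_{L/K}^+(\mathfrak{b})$ inside $\wt{\e}^+$) and is the point of the whole maneuver: since $\pi\in\mathfrak{a}_{K_m}$, the mod-$\mathfrak{a}$ condition is visible from the mod-$\pi$ data that $\beta$ actually provides.

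Second, and more seriously, your treatment of the case $a\nmid r_m$ has the exponent running in the wrong direction. Writing $s_m:=r_m-aj_m$, the $p$-power compatibility gives $\beta(\underline{x})_{r_m}^{\,p^{s_m}}=\overline{x}_{q^{j_m},\,0}$, so the element you must control, $\beta(\underline{x})_{r_m}$, is a $p^{s_m}$-th \emph{root} of the element you have pinned in $\O_{K_m}/(\pi)$ — not a $p^{s_m}$-th power of it. Extracting $p$-th roots in $\O_{K_m}/(\pi)$, an artinian ring with nilpotents on which Frobenius need not be surjective, can leave the ring, so your proposed "adjustment by a $p^{s_m}$-th power" does not produce the needed containment (and, taken literally as a power, it also fails to satisfy the norm-compatibility relation $x_m^{[K_m:K_{m-1}]}=x_{m-1}$ unless the $s_m$ are constant). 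The paper's proof runs the opposite way: it exhibits $\beta(\underline{x})_{r_n}$ as a $p^d$-th \emph{power} (with $d\ge 0$) of an element already known to lie in $\O_{K_n}/(\pi)$, and powers of course stay inside. In short, you have correctly located the obstacle but the sketch you offer for bridging it is pointed in the wrong direction and is left unverified.
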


\begin{proof}
Let $\mathfrak{a} := (\mathfrak{b}, \pi)$.  This ideal satisfies the hypotheses of Corollary~\ref{comparison with Wintenberger's norm ring}.  By that lemma (and using its notation), it suffices to show that for $\underline{x} \in \ALK$, we have $\beta(\underline{x}) \in \e_{L/K}^+(\mathfrak{a})$.  
We will show $\beta(\underline{x})_{r_n} \in \O_{K_n}/(\pi)$ for all $n$.  Write $q = p^a$ and let $s_n$ be the smallest index such that $q^{s_n} \mid [K_n : K_1]$.  Then by the definition of $\ALK$, we know that $x_{q^{s_n} 0} \in \O_{K_n}$.  This shows that $\beta(\underline{x})_{p^{as_n}} \in \O_{K_n}/(\pi)$.  Because $r_n = as_n - d$ for some $d \geq 0$, we have that $\beta(\underline{x})_{r_n}$ is the $p^d$-th power of some element in $\O_{K_n}/(\pi)$, and hence is itself in $\O_{K_n}/(\pi)$, which completes the proof.
\end{proof}

Our next goal is to study the effect of multiplication by $\pi$ on an element in $\a_{L/K}^+$.  We will use these results to show that $\a_{L/K}^+$ is $\pi$-adically complete and separated.  Recall that Lemma~\ref{multiplication by pi} showed that for a Witt vector $\underline{x} \in W_{\pi}(R)$ which is divisible by $\pi^i$, then $x_{j}$ is divisible by $\pi^{i-j}$ for any $0 \leq j \leq i$.  
The converse to this lemma is not true; for example, the first Witt component being divisible by $\pi$ does not imply that the Witt vector is divisible by $\pi$.  For example, $V([1]) \in W_{\pi}(\O_K)$ is not divisible by $\pi$, even though its first component is zero.  However, we do have the following result (whose proof was suggested to us by Abhinav Kumar), which states that if we take a Witt vector whose first component is divisible by $\pi$, and apply Frobenius to it, then the result is a Witt vector which is divisible by $\pi$.  This result will be very useful with respect to our inverse systems under Frobenius maps.  

\begin{lemma} \label{lem: abhinav}
Let $R$ denote any $\O_F$-algebra, and let $\underline{y}\in W_{\pi}(R)$ with $y_0 \in \pi R$.  Then $\Fr(\underline{y}) \in \pi W(R)$.   
\end{lemma}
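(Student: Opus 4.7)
The plan is to reduce to a universal $\pi$-torsion free case and then apply the generalized Dwork lemma (Proposition~\ref{dwork}).

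By Witt vector functoriality, it suffices to prove the result in the universal case $R_0 := \O_F[t, y_1, y_2, \ldots]$ with $\underline{y}^{\univ} := (\pi t, y_1, y_2, \ldots) \in W_\pi(R_0)$: the $\O_F$-algebra map $R_0 \to R$ sending $t \mapsto y_0'$ (where $y_0 = \pi y_0'$) and $y_i \mapsto y_i$ carries $\underline{y}^{\univ}$ to $\underline{y}$, so any relation $\Fr(\underline{y}^{\univ}) = \pi\underline{w}$ pushes forward. Crucially, $R_0$ is $\pi$-torsion free, so by Lemma~\ref{W of pi-torsion free} the ghost map on $W_\pi(R_0)$ is injective. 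I would equip $R_0$ with the Frobenius lift $\phi$ sending $t \mapsto t^q$ and $y_i \mapsto y_i^q$, which visibly satisfies $\phi(r) \equiv r^q \bmod \pi$ for all $r \in R_0$.

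The heart of the argument is the identity
\[
w_{n+1}(\underline{y}^{\univ}) - \phi\bigl(w_n(\underline{y}^{\univ})\bigr) = \pi^{n+1} y_{n+1} + \bigl(\pi^{q^{n+1}} - \pi^{q^n}\bigr) t^{q^{n+1}},
\]
whose right-hand side is divisible by $\pi^{n+1}$ because $q^n \geq n+1$ for all $n \geq 0$ (using $q \geq 2$). Since $y_0^{\univ} = \pi t$, every summand of $w_{n+1}(\underline{y}^{\univ}) = \pi^{q^{n+1}} t^{q^{n+1}} + \sum_{j=1}^{n+1} \pi^j y_j^{q^{n+1-j}}$ is $\pi$-divisible, so $u_n := w_{n+1}(\underline{y}^{\univ})/\pi$ is a well-defined element of $R_0$; dividing the above identity by $\pi$ then gives the Dwork-type congruence $\phi(u_{n-1}) \equiv u_n \bmod \pi^n$ for all $n \geq 1$.

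Proposition~\ref{dwork} now produces $\underline{w} \in W_\pi(R_0)$ whose ghost components are exactly $(u_n)_{n \ge 0}$. Since the ghost map is a ring homomorphism, the ghost components of $\pi \underline{w}$ are $(\pi u_n)_{n\ge 0} = (w_{n+1}(\underline{y}^{\univ}))_{n\ge 0}$, which by Definition~\ref{Frobenius definition} coincide with the ghost components of $\Fr(\underline{y}^{\univ})$; injectivity of the ghost map on $W_\pi(R_0)$ then forces $\Fr(\underline{y}^{\univ}) = \pi \underline{w}$. The main obstacle is choosing the auxiliary Frobenius lift $\phi$ carefully enough that the ghost-component difference has precisely the $\pi$-adic valuation needed to apply Dwork's lemma after dividing by $\pi$; this reduces to the elementary inequality $q^n \geq n+1$, which is where the hypothesis $q \geq 2$ enters.
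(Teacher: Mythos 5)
Your proof is correct, but it takes a substantially different route from the paper's, and it is worth noting the contrast. The paper exploits the $\O_F$-algebra structure of $W_\pi(R)$ directly: since the first Witt component of $\pi[y']$ equals its first ghost component $\pi y' = y_0$, one can write $\underline{y} = \pi[y'] + V(\wt{\underline{y}})$; applying $\Fr$ and using $\Fr V = \pi$ (Lemma~\ref{FV lemma}) gives $\Fr(\underline{y}) = \pi\bigl(\Fr([y']) + \wt{\underline{y}}\bigr)$ in one line, with no torsion-freeness hypothesis, no auxiliary ring, and no appeal to Dwork's lemma. You instead reduce to the universal $\pi$-torsion-free ring $R_0 = \O_F[t, y_1, y_2, \ldots]$, introduce the Frobenius lift $\phi(t) = t^q$, $\phi(y_i) = y_i^q$, verify the Dwork-type congruence $\phi(u_{n-1}) \equiv u_n \bmod \pi^n$ via the explicit ghost-component identity and the inequality $q^n \ge n+1$, apply Proposition~\ref{dwork} to build $\underline{w}$, and finish by injectivity of the ghost map (Lemma~\ref{W of pi-torsion free}). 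Your computation checks out (in particular the verification that $\phi$ lifts $q$-power Frobenius on $R_0$ uses $|k_F| = q$, which holds by Notation~\ref{K notation}), and the functoriality step correctly pushes the universal identity to an arbitrary $R$. What the paper's approach buys is brevity and the insight that the statement is really just a disguise for $\Fr V = \pi$ plus the Teichm\"uller decomposition; what yours buys is a completely ghost-theoretic proof that never needs the Verschiebung operator and makes the divisibility structure on the ghost side explicit. Either would be acceptable; the paper's is the one a reader would likely want to internalize, since the decomposition $\underline{y} = \pi[y_0/\pi] + V(\wt{\underline{y}})$ is a useful trick elsewhere in the manuscript.
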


\begin{proof}
Write $\underline{y} = \pi[y'] + V(\wt{\underline{y}})$, for some $y' \in R$ and $\wt{\underline{y}} \in W_{\pi}(R)$.  Applying $\Fr$ to both sides, and using the facts that $\Fr$ is an $\O_F$-algebra homomorphism (Definition~\ref{Frobenius definition}) and that the composition $\Fr V$ is equal to multiplication by $\pi$ (Lemma~\ref{FV lemma}), immediately yields the result.   
\end{proof}

The following proposition shows that the ring $\a_{L/K}^+$ satisfies many of the desired properties which were enumerated in Theorem~\ref{MT}.  

\begin{proposition} \label{prop: W(k)-algebra etc}
Recall that $k$ denotes the residue field of $\O_L$ (or, equivalently, of $\O_{K_0}$) and that $\beta: \varprojlim_{\Fr} W_{\pi}(\O_{\c_F}) \rightarrow \wt{\e}^+$ was defined in Definition~$\ref{def: beta}$.
	The ring $\a_{L/K}^+$ is a $\pi$-torsion free, $\pi$-adically complete and separated $W_{\pi}(k)$-algebra.  Furthermore,
	the map $\beta: \a_{L/K}^+ \rightarrow \e_{L/K}^+$ has kernel equal to $\pi\a_{L/K}^+$.
\end{proposition}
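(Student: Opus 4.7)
The plan is to verify each claim using the inverse-limit description $\wt{\a}^+_{L/K}\simeq \varprojlim_{\Fr} W_{\pi}(\O_{\wh{L}})$ of Proposition~\ref{perfect lift}. Since $k$ is perfect, $\Fr$ is an automorphism of $W_{\pi}(k)$; for $\u{x}\in W_{\pi}(k)$ the $\Fr$-compatible sequence $(\Fr^{-i}(\u{x}))_i$ has every term in $W_{\pi}(k)\subseteq W_{\pi}(\O_{K_0})\subseteq W_{\pi}(\O_{K_m})$, hence defines an element of $\ALK$, yielding the embedding $W_{\pi}(k)\hookrightarrow \ALK$. The properties of being $\pi$-torsion-free and $\pi$-adically separated are then inherited from the ambient ring $\wt{\a}^+_{L/K}=W_{\pi}(\wt{\e}^+_{L/K})$, which has both by Proposition~\ref{perfect k-algebra} since $\wt{\e}^+_{L/K}$ is a perfect $k_F$-algebra.

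For $\pi$-adic completeness, I will take a $\pi$-adic Cauchy sequence $\u{y}^{(n)}$ in $\ALK$. Since $\wt{\a}^+_{L/K}$ is $\pi$-adically complete (Proposition~\ref{perfect k-algebra}), it converges to some $\u{y}=(\u{y}_{q^i})_i$, and because $\pi$ acts coordinatewise on the inverse limit, each component converges $\pi$-adically in $W_{\pi}(\O_{\wh{L}})$. Fixing $(i,m)$ with $q^i\mid [K_m:K_1]$, I must show that $\u{y}_{q^i}$, the $\pi$-adic limit of elements of $W_{\pi}(\O_{K_m})$, lies in $W_{\pi}(\O_{K_m})$. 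The argument passes to ghost components: since $\pi^N$-divisibility of a Witt vector forces $\pi^N$-divisibility of every ghost component, each ghost $w_j(\u{y}_{q^i})$ is the $\pi$-adic limit in $\O_{\wh{L}}$ of the ghosts $w_j(\u{y}^{(n)}_{q^i})\in \O_{K_m}$, hence lies in the $\pi$-adically closed subring $\O_{K_m}$. I then recover the Witt coordinates inductively via $\pi^j x_j=w_j-\sum_{l<j}\pi^l x_l^{q^{j-l}}$: assuming $x_0,\dots,x_{j-1}\in\O_{K_m}$, the right-hand side lies in $\O_{K_m}\cap \pi^j\O_{\wh{L}}=\pi^j\O_{K_m}$ (clear by comparing valuations in the DVRs), so $x_j\in\O_{K_m}$.

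The containment $\pi\ALK\subseteq\ker\beta$ is immediate from Remark~\ref{betamapexplicit} and Corollary~\ref{mult by pi char p}, which identify $\beta$ with projection onto the $0$th Witt coordinate under $\wt{\a}^+_F=W_{\pi}(\wt{\e}^+)$ and show it annihilates $\pi W_{\pi}(\wt{\e}^+)$. Conversely, $\u{x}\in \ALK$ with $\beta(\u{x})=0$ can be written uniquely as $\u{x}=\pi\u{z}$ with $\u{z}\in \wt{\a}^+_F$. The subring $\wt{\a}^+_{L/K}=W_{\pi}(\wt{\e}^+_{L/K})$ is $\pi$-saturated in $\wt{\a}^+_F=W_{\pi}(\wt{\e}^+)$ because $\wt{\e}^+_{L/K}$ is closed under $q$-th roots (by shifting indices in $\varprojlim_{x\mapsto x^p}\O_{\wh{L}}/\mfa$), so $\u{z}\in \wt{\a}^+_{L/K}$. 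For each $(i,m)$ with $q^i\mid [K_m:K_1]$, every ghost component of $\u{z}_{q^i}$ equals $\pi^{-1}$ times the corresponding ghost of $\u{x}_{q^i}\in W_{\pi}(\O_{K_m})$, and hence lies in $K_m\cap \O_{\wh{L}}=\O_{K_m}$; the same inductive recovery places $\u{z}_{q^i}$ in $W_{\pi}(\O_{K_m})$. The main technical obstacle throughout is precisely this ghost-to-Witt-coordinate reduction, which rests on the valuation-theoretic identity $\O_{K_m}\cap \pi^j\O_{\wh{L}}=\pi^j\O_{K_m}$ and avoids the delicate question of whether $W_{\pi}(\O_{K_m})$ is itself $\pi$-adically complete.
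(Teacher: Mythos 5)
Your arguments for $\pi$-adic completeness and for the computation of $\ker\beta$ are correct and take a genuinely different — and in places cleaner — route than the paper's own proof. By passing systematically to ghost components (injective here since the relevant rings are $\pi$-torsion free), recovering Witt coordinates from ghosts via the recursion $\pi^j x_j = w_j - \sum_{l<j}\pi^l x_l^{q^{j-l}}$, and using the saturation $\O_{K_m}\cap\pi^j\O_{\wh L}=\pi^j\O_{K_m}$, you avoid the estimates on Witt addition/Frobenius polynomials that the paper relies on through Lemmas~\ref{multiplication by pi} and \ref{lem: abhinav}, and you make fully explicit a point the paper's kernel argument glosses over, namely that $\u{x}/\pi$ lands back in $\ALK$ and not merely in $\wt{\a}^+_{L/K}$. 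The observation that $\wt{\a}^+_{L/K}$ is $\pi$-saturated in $\wt{\a}^+_F$ because $\wt{\e}^+_{L/K}$ is closed under $q$-th roots is a tidy shortcut. The inheritance of torsion-freeness and separatedness from the ambient $W_\pi(\wt{\e}^+_{L/K})$ is also fine.

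There is, however, a genuine gap in your first paragraph: the asserted containment $W_{\pi}(k)\subseteq W_{\pi}(\O_{K_0})$ does not exist on the face of it. The second containment $W_{\pi}(\O_{K_0})\subseteq W_{\pi}(\O_{K_m})$ is just functoriality of $W_\pi$ for $\O_{K_0}\hookrightarrow\O_{K_m}$, but there is no ring homomorphism $k\to\O_{K_0}$ (the characteristics differ), and the Teichm\"uller section $k\to\O_{K_0}$ is multiplicative but not additive, so $W_\pi(-)$ gives you no map at all. Producing a $\Fr$-equivariant ring embedding $W_\pi(k)\hookrightarrow W_\pi(\O_{K_0})$ is exactly the content the paper extracts from Proposition~\ref{lambda prop}: since $\Fr$ on the $\pi$-torsion-free ring $W_\pi(k)$ lifts the $q$-power map modulo $\pi$, there is a unique section $\lambda_{\Fr}:W_\pi(k)\to W_\pi(W_\pi(k))$ with $\lambda_{\Fr}\circ\Fr=\Fr\circ\lambda_{\Fr}$, and one composes with $W_\pi$ of the inclusion $W_\pi(k)\hookrightarrow\O_{K_0}$. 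Your sequence should therefore be $\bigl(\lambda_{\Fr}(\Fr^{-i}(\u{x}))\bigr)_i$ rather than $(\Fr^{-i}(\u{x}))_i$; it is precisely the identity $\lambda_{\Fr}\circ\Fr=\Fr\circ\lambda_{\Fr}$ that makes this Frobenius-compatible inside $\varprojlim_{\Fr}W_\pi(\O_{K_0})$. Once this is put in, the rest of your first paragraph goes through.
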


\begin{proof}
We first show that $\a_{L/K}^+$ is a $W_{\pi}(k)$-algebra; this is slightly harder than showing it is a $W_{\pi}(k_F)$-algebra.  The Witt vector Frobenius $\Fr: W_{\pi}(k) \rightarrow W_{\pi}(k)$ is an $\O_F$-algebra homomorphism which induces the $q$-power map modulo $\pi$.  Hence using Proposition~\ref{lambda prop}, we have a natural ring homomorphism $\lambda_{\Fr}: W_{\pi}(k) \rightarrow W_{\pi}(W_{\pi}(k))$, and the family of ring homomorphisms 
$\lambda_{\Fr} \circ \Fr^{-n} : W_{\pi}(k) \rightarrow W_{\pi}(W_{\pi}(k))$ are Frobenius compatible, so we have an induced map to $\varprojlim_{\Fr} W_{\pi}(W_{\pi}(k))$.  From the explicit description of $W_{\pi}(k)$ given in Proposition~\ref{perfect k-algebra}, we know that we have a natural inclusion $W_{\pi}(k) \hookrightarrow \O_{K_0}$, so we have a natural map $W_{\pi}(k) \rightarrow \varprojlim_{\Fr} W_{\pi}(\O_{K_0})$.  Because $K_0 \subseteq K_n$ for all $n$, this completes the proof that $\a_{L/K}^+$ is a $W_{\pi}(k)$-algebra.

Lemma~\ref{W of pi-torsion free} shows that $\a_{L/K}^+$ is $\pi$-torsion free and Lemma~\ref{W separated} shows that $\a_{L/K}^+$ is $\pi$-adically separated.  To check that $\a_{L/K}^+$ is $\pi$-adically complete, let $\underline{x}^{(1)}, \underline{x}^{(2)}, \ldots$ denote a $\pi$-adic Cauchy sequence of elements in $\a_{L/K}^+$.   Lemma~\ref{multiplication by pi} shows that for any $i,j$, the coordinates $x_{q^ij}^{(1)}, x_{q^ij}^{(2)}, \ldots \in \O_{\wh{L}}$ form a $\pi$-adic Cauchy sequence.  The resulting limits form a Frobenius-compatible system of Witt vectors in $W_{\pi}(\O_{\wh{L}})$, because the definition of $\Fr$ in terms of coordinates involves polynomials over $\O_F$ by Lemma~\ref{Frobenius polynomials}, and such polynomials are in particular $\pi$-adically continuous.  We thus have a well-defined candidate for the limit $\underline{x} \in \wt{\a}_{L/K}^+$.   On the other hand, we know that there does exist a $\pi$-adic limit in $\wt{\a}_{L/K}^+$, because $\wt{\a}_{L/K}^+ \cong W_{\pi}(\wt{\e}_{L/K}^+),$ and the latter is $\pi$-adically complete.  Each Witt vector $\underline{x}_{q^i}$ in the inverse system corresponding to $\underline{x}$ has at least the same first Witt coordinate as the actual limit (because addition and multiplication are defined in the obvious way on the first Witt coordinate).  But Lemma~\ref{lem: abhinav} and the fact that $\wt{\a}_{L/K}^+$ is $\pi$-adically separated shows that an element in $\wt{\a}_{L/K}^+$ is uniquely determined by its sequence of first Witt components.
We have shown that our $\pi$-adic Cauchy sequence has a limit in $\wt{\a}_{L/K}^+$.  To show that the limit is in fact in $\a_{L/K}^+$, we note that each elementary subfield $K_m$ is a finite extension of $K$, and hence is $\pi$-adically complete.  


Next we check that the kernel of $\beta$ is equal to $\pi\a_{L/K}^+$.  The fact that every element divisible by $\pi$ is contained in the kernel of $\beta$ follows immediately from the definitions (or from the fact that the target is characteristic~$p$).  So to prove the reverse inclusion, let $\underline{x}$ denote an element in the kernel of $\beta$.  From the fact that $\beta(\underline{x}) = 0$, we have that $x_{q^i 1}$ is divisible by $\pi$ for every $i$.   Using Lemma~\ref{lem: abhinav}, this shows that each Witt vector $\underline{x}_{q^i}$ comprising $\underline{x}$ is divisible by $\pi$.  Using that $W(\O_{\wh{L}})$ is $\pi$-torsion free by Lemma~\ref{W of pi-torsion free}, we deduce that the Witt vectors $\underline{x}_{q^i}/\pi$ are Frobenius-compatible, and hence $\underline{x}$ is divisible by $\pi$, as required.
\end{proof}

Our hope is that the ring $\ALK$ provides a functorial lift of the valuation ring in the norm field of $L/K$
in many situations.
In \S\ref{Sec:lift}, we will prove that this is indeed the case (up to a finite, purely inseparable extension) for a large
class of strictly APF extensions.  However, in many cases of interest ({\em e.g.}~when $L/K$ is Galois
with $\Gal(L/K)$ admitting no abelian quotient by a finite subgroup) our candidate lift $\ALK$ is too small.  The remainder of this section is devoted to making this statement precise.

\begin{corollary}\label{kstrict}
Write $\e^+$ and $\e$ for $\ALK/\pi\ALK$ and its fraction field, respectively.  If the residue ring $\e^+$ is strictly larger than $k$, then $\e_{L/K}$ is a finite extension of $\e.$  In particular, there is a non-canonical isomorphism of rings 
$\ALK\simeq W_{\pi}(k)[\![y]\!]$.
\end{corollary}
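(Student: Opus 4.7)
The plan is to exploit $\e^+ \supsetneq k$ by finding an element of positive $x$-adic valuation in $\e^+$, lifting it to $\ALK$, and using the resulting topological data to both embed a power series ring inside $\ALK$ and force the claimed finiteness. Using the embedding $\e^+ \hookrightarrow \e_{L/K}^+ \simeq k[\![x]\!]$ afforded by Proposition~\ref{image of beta}, every element of $\e^+$ has a well-defined $x$-adic valuation; I pick $\bar y \in \e^+ \setminus k$ and, after subtracting a scalar in $k$, arrange $v_x(\bar y) = n \geq 1$, choosing $n$ minimal among positive valuations attained in $\e^+$. A lift $y \in \ALK$ of $\bar y$ exists since $\ker \beta = \pi \ALK$ by Proposition~\ref{prop: W(k)-algebra etc}.

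Next I would build an injective ring homomorphism $W_\pi(k)[\![Y]\!] \to \ALK$ sending $Y \mapsto y$. Since $\ALK$ is a $W_\pi(k)$-algebra, polynomials in $y$ automatically lie inside; to extend to power series I use that $\ALK$ is closed in the weak topology on $\wt\a^+_F$ (Proposition~\ref{ALK is weakly closed}). Under the product description $\wt\a^+_F \cong \prod \wt\e^+$ via Witt components (Proposition~\ref{prop: Fontaine}), the zeroth Witt component of $y^i$ equals $\bar y^i$, whose $\wt\e^+$-valuation grows as $in \to \infty$; higher Witt components are universal polynomials in lower ones (Lemma~\ref{Frobenius polynomials}) and by induction also tend to $0$ in valuation. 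Hence $y^i \to 0$ weakly, so any series $\sum_i a_i y^i$ with $a_i \in W_\pi(k)$ converges in $\wt\a^+_F$ with value in $\ALK$ by closedness. Injectivity modulo $\pi$ reduces to injectivity of $k[\![Y]\!] \to k[\![x]\!]$, $Y \mapsto \bar y$, which is clear since $\bar y$ has positive $x$-adic valuation; integral injectivity then follows from $\pi$-torsion freeness and $\pi$-adic completeness.

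Reducing modulo $\pi$ yields $k[\![\bar y]\!] \hookrightarrow \e^+ \subseteq k[\![x]\!]$. Since $\bar y = x^n u$ for a unit $u \in k[\![x]\!]^\times$, the ring $k[\![x]\!]$ is free of rank $n$ over $k[\![\bar y]\!]$, and $\e^+$ becomes a finite $k[\![\bar y]\!]$-submodule; passing to fraction fields, $k(\!(\bar y)\!) \subseteq \e \subseteq \e_{L/K} = k(\!(x)\!)$ with the outer extension of degree $n$, proving $[\e_{L/K} : \e] < \infty$.

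For the isomorphism $\ALK \simeq W_\pi(k)[\![y]\!]$, by $\pi$-adic completeness and $\pi$-torsion freeness of both rings it suffices to show $\e^+ = k[\![\bar y]\!]$. Using the $k[\![\bar y]\!]$-module decomposition $k[\![x]\!] = \bigoplus_{i=0}^{n-1} k[\![\bar y]\!] x^i$ together with our minimality hypothesis on $n$, any $\bar z \in \e^+$ with a nontrivial component along $x^j$ for some $1 \leq j \leq n-1$ would, after dividing by a suitable power of $\bar y$, yield an element of $x$-adic valuation in $[1, n-1]$, contradicting minimality once one knows this quotient lies back in $\e^+$. I expect this closure under division to be the main obstacle: a priori the quotient lies only in $\e$, so completing the argument will require integral closedness of $\e^+$ inside $\e$, which one expects to follow from normality of $\ALK$ inherited from $\wt\a_F^+$.
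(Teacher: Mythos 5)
Your overall plan matches the paper's (lift a minimal-valuation element, use weak closedness to embed a power series ring, then pass to fraction fields), but there are two substantive issues.

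First, your justification of weak convergence of the powers $y^i$ is not correct. You write that ``higher Witt components are universal polynomials in lower ones (Lemma~\ref{Frobenius polynomials}) and by induction also tend to $0$ in valuation.''  Lemma~\ref{Frobenius polynomials} describes the Witt components of the \emph{Frobenius} of an element, not of its powers; the higher Witt coordinates of $y^i$ are in no way determined by the lower ones, so this inductive step has no content.  The paper avoids this by producing a lift of a specific shape: since $\ker\beta = \pi\ALK$, one may lift $\bar y$ to $\wt y = [\bar y] + \pi z$ with $z\in\wt\a_F^+$ (the Teichm\"uller term plus a $\pi$-multiple).  Expanding $\wt y^{\,i} = \sum_k \binom{i}{k}[\bar y^{\,i-k}]\pi^k z^k$ and using Corollary~\ref{mult by pi char p}, every Witt coordinate of $\wt y^{\,i}$ carries a factor $\bar y^{\,q^j(i-k)}$, which makes the coordinate-wise (hence weak) convergence transparent; your argument, which only tracks the zeroth coordinate, does not.

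Second, your worry at the end is a real one, and you are right not to paper over it. You correctly see that the division argument produces an element $g\in\e\cap k[\![x]\!]$ with $0<\ord_x(g)<n$, and that for the minimality of $\ord_x(y)$ to be contradicted one needs $g$ to lie in the image of $\e^+$, not merely in $\Frac(\e^+)\cap k[\![x]\!]$.  The paper's proof does not resolve this any differently: it works with an arbitrary intermediate field $l$ and the free-module decomposition $k[\![x]\!]=\bigoplus_{i=0}^{n-1}k[\![y]\!]\,x^i$, produces the same $g\in l\cap k[\![x]\!]$, and then concludes the contradiction.  So you have correctly isolated the delicate step. However, your proposed repair is not safe: weak closedness of $\ALK$ in $\wt\a_F^+$ does not give normality of $\e^+$ in $\e$ for free.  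For instance, $k[\![t^2,t^3]\!]$ is a closed Frobenius-stable $k$-subalgebra of $k[\![t]\!]$ whose fraction field is all of $k(\!(t)\!)$, yet it is not integrally closed; so something beyond closedness must enter to rule out such residue rings, and ``normality inherited from $\wt\a_F^+$'' cannot be invoked without a separate argument.
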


\begin{proof}
By \cite[Th\'eor\`eme~2.1.3]{Wintenberger} and our results from Section~\ref{Sec: norm}, we know that $\e_{L/K}^+ \cong k[\![x]\!].$  Let $y \in (x) \subseteq k[\![x]\!]$ denote an element in the image of  $\e^+$
such that $\ord_x(y)$ is minimal.   We can find a lift $\wt{y}\in\ALK$ of $y$ of the form $[y] + \pi z$, where $z \in \wt{\a}^+_{F}$.  (Note that we do not claim $z \in \ALK$.)  We have an inclusion of rings $W_{\pi}(k)\left[ \wt{y} \right] \subseteq \ALK.$  Because $\ALK \subseteq \wt{\a}^+_F$ is weakly closed (Proposition~\ref{ALK is weakly closed}), using our explicit description of $\wt{y}$, we see that in fact $W_{\pi}(k)[\![\wt{y} ]\!] \subseteq \ALK,$ which implies $k[\![y]\!] \subseteq \e^+$.  As $k(\!(y)\!) \subseteq k(\!(x)\!)$ is a finite extension of degree $\ord_x(y)$ (see for example \cite[Theorem~3]{NP06}), the first assertion follows.  

We next claim that $k(\!(y)\!) = \e$ as subrings of $\e_{L/K}$.  Consider more generally any field $k(\!(y)\!) \subseteq l \subseteq k(\!(x)\!)$.  Any element of $l \cap k[\![x]\!]$ can be written uniquely in the form
\[
a_0 + a_1 x + \cdots + a_{n-1} x^{n-1},
\]
where $n := \ord_x(y)$ and where $a_0, \ldots, a_{n-1} \in k[\![y]\!]$.  Unless $a_1 = \cdots = a_{n-1} = 0$, one may (after subtracting off $a_0$) divide by $y^m$, where $m = \min\left(\ord_y(a_1), \ldots, \ord_y(a_{n-1})\right)$ to produce an element $g$ in $l$ such that $0 < \ord_x(g) < n$.  In the specific case $l = \e$, this contradicts the minimality of $\ord_x(y)$.  This proves the claim. The embedding of rings $W_{\pi}(k)[\![\wt{y}]\!]\rightarrow \ALK$ is surjective modulo $\pi$,
and hence is an isomorphism because the target is $\pi$-adically separated. 
\end{proof}

\begin{lemma} \label{norm field faithful}
The group $\Aut(L)$ acts faithfully on $X_K(L)$.   
\end{lemma}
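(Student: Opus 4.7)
My plan is to first reduce to the case $K=\Q_p$: by Proposition~\ref{pinsep} we have $X_{\Q_p}(L)\subseteq X_K(L)$ inside $\wt\e$, so any $\sigma\in\Aut(L)$ fixing $X_K(L)$ pointwise automatically fixes $X_{\Q_p}(L)$ pointwise, and the whole group $\Aut(L)=\Aut(L/\Q_p)$ preserves each elementary layer $K_m$ of the tower associated to $L/\Q_p$ by Lemma~\ref{Autaction}. Thus it suffices to show that any $\sigma\in\Aut(L)$ acting trivially on $X_{\Q_p}(L)$ must be the identity.

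The next step is to transfer the hypothesis through the $\Aut(L)$-equivariant multiplicative isomorphism $\wt\e_L^+\cong\varprojlim_{x\mapsto x^p}\O_{\wh L}$ of Proposition~\ref{independent of mfa}, on which the action is coordinate-wise. The hypothesis then reads: for every $(y_n)_n\in X_{\Q_p}(L)\subseteq\varprojlim_{x\mapsto x^p}\O_{\wh L}$ one has $\sigma(y_n)=y_n$ in $\O_{\wh L}$ for all $n\ge 0$. The key calculation is to identify $y_{r_m}$ when $(y_n)$ arises from a norm-compatible uniformizer system $(\pi_{K_m})_m\in X_{\Q_p}(L)$ (whose existence is given by Corollary~\ref{comparison with Wintenberger's norm ring}). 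Using the recipe $y_n=\lim_k \hat x_{n+k}^{p^k}$ along the subsequence $k=r_{m+j}-r_m$, where $p^k=[K_{m+j}:K_m]$ and one may take $\hat x_{r_{m+j}}=\pi_{K_{m+j}}\in\O_L$, one obtains
\[
y_{r_m}=\lim_{j\to\infty}\pi_{K_{m+j}}^{[K_{m+j}:K_m]}.
\]
Iterating the congruence $\alpha^{[K_{n+1}:K_n]}\equiv\Nm_{K_{n+1}/K_n}(\alpha)\pmod{\b}$ established in the proof of Corollary~\ref{comparison with Wintenberger's norm ring}, together with the norm compatibility of $(\pi_{K_m})_m$, identifies this limit with $\pi_{K_m}$; since $K_m/\Q_p$ is finite and hence complete, the limit actually lies in $K_m$, so $y_{r_m}=\pi_{K_m}$ inside $K_m$.

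The lemma then follows quickly: the hypothesis gives $\sigma(\pi_{K_m})=\pi_{K_m}$ for every $m\ge 0$. Fixation of $X_{\Q_p}(L)$ also forces $\sigma$ to act trivially on the residue field of $X_{\Q_p}(L)$, which is canonically $k_L$, and since $K_0/\Q_p$ is unramified the restriction $\Aut(K_0/\Q_p)\hookrightarrow\Aut(k_L/\F_p)$ is injective, giving $\sigma|_{K_0}=\id$. Because $K_m/K_0$ is totally ramified with uniformizer $\pi_{K_m}$, we have $K_m=K_0(\pi_{K_m})$, and hence $\sigma|_{K_m}=\id$ for every $m\ge 0$; since $L=\bigcup_m K_m$, we conclude $\sigma=\id_L$. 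The main technical obstacle will be the convergence claim $y_{r_m}=\pi_{K_m}$, which requires uniformly controlling the errors in the norm-compatibility approximation as $j\to\infty$ via the ramification estimates of \cite[\S1]{Wintenberger}; the remaining steps are essentially formal.
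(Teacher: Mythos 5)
Your plan correctly identifies the endgame (show $\tau$ fixes the residue field and a uniformizer of each $K_m$, then conclude $\tau|_{K_m}=\id$ since $K_m=K_0(\pi_{K_m})$), and the reduction to $K=\Q_p$ via Proposition~\ref{pinsep} is sound. But the key technical claim on which everything hinges --- that the $p$-power lift of a norm-compatible uniformizer system $(\pi_{K_m})_m$ satisfies $y_{r_m}=\pi_{K_m}$ in $\O_{\wh L}$ --- is \emph{false} in general, and cannot be repaired by ``uniformly controlling the errors.'' The sequence $(y_n)_n$ built from Proposition~\ref{independent of mfa} is $p$-power compatible by construction, so the assertion $y_{r_m}=\pi_{K_m}$ would force the exact relation $\pi_{K_{m+1}}^{[K_{m+1}:K_m]}=\pi_{K_m}$. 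Norm-compatibility only gives $\Nm_{K_{m+1}/K_m}(\pi_{K_{m+1}})=\pi_{K_m}$, and the norm is not a power in general: already in the cyclotomic example (with $p$ odd), $\Nm_{\Q_p(\mu_{p^{m+2}})/\Q_p(\mu_{p^{m+1}})}(\zeta_{p^{m+2}}-1)=\zeta_{p^{m+1}}-1$ is norm-compatible, yet $(\zeta_{p^{m+2}}-1)^{p}\neq \zeta_{p^{m+1}}-1$. So $y_{r_m}$ lands in $\O_{\wh L}$ and is merely congruent to $\pi_{K_m}$ modulo $\mathfrak{a}$, not equal to it. Consequently all you can extract from $\tau(y_{r_m})=y_{r_m}$ is the congruence $\tau(\pi_{K_m})\equiv\pi_{K_m}\bmod\mathfrak{a}_{K_m}$, which does not immediately pin down $\tau|_{K_m}$.

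The fix is to use the \emph{norm} reconstruction rather than the $p$-power reconstruction. The inverse of the reduction map in Lemma~\ref{sindep} sends $(x_E)_E$ to the genuinely norm-compatible family $\alpha_E=\lim_{E'}\Nm_{E'/E}(\wh x_{E'})\in\O_E$, is independent of all choices, and is therefore $\Aut(L)$-equivariant. If $\tau$ fixes the image of $(\pi_{K_m})_m$ in $\e_{L/\Q_p}^+$, this equivariance gives $\tau(\pi_{K_m})=\pi_{K_m}$ \emph{exactly} for every $m$ (using Lemma~\ref{Autaction} that $\tau$ preserves each $K_m$). Combined with your residue-field observation (which is fine), the rest of your argument then runs as written. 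This is essentially how the paper's short proof, adapted from Wintenberger's Lemme~3.1.3.1, is to be read: the uniformizers are fixed because the \emph{exact} norm-compatible inverse limit realization of $A_{\Q_p}(L)$ is equivariant, not because the perfected $p$-power lifts $y_n$ reproduce them.
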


\begin{proof}
One adapts the proof of \cite[Lemme~3.1.3.1]{Wintenberger}: if $\tau$ is an automorphism acting trivially on $X_K(L)$, then in particular $\tau$ fixes the residue field $k$ of $L$ and also fixes uniformizers for subfields $E \subseteq L$ which are finite over $K$.  
\end{proof}

\begin{lemma}\label{faithact}
Continue to write $\e := \Frac\left( \ALK/\pi\ALK\right)$ and write $\e^{\sep}$ for the separable closure of $\e$ in $\e_{L/K}$.  The kernel of the action of $\Aut(L/K)$ on $\ALK$ has order at most $\sigma_{L/K} := [\e^{\sep} : \e]$.  
\end{lemma}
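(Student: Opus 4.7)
The plan is to injectively embed the kernel $H := \ker\bigl(\Aut(L/K) \to \Aut(\ALK)\bigr)$ into $\Aut(\e^{\sep}/\e)$, whose order is bounded by $[\e^{\sep}:\e]=\sigma_{L/K}$ since $\e^{\sep}/\e$ is separable. The main issue is keeping straight the various $\Aut(L/K)$-actions and confirming their compatibility.

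First I would verify that the reduction map $\beta: \ALK \to \e_{L/K}^+$ of Proposition \ref{image of beta} is $\Aut(L/K)$-equivariant, where $\Aut(L/K)$ acts on $\e_{L/K}^+ \subseteq \wt{\e}^+$ in the natural coordinate-wise way. This is routine: the action on $\ALK$ comes from $W_\pi$-functoriality applied coordinate-wise in each Witt component, and the description of $\beta$ given in Remark \ref{betamapexplicit} (``project onto the first ghost/Witt component of the first entry'') is clearly intertwined with this coordinate-wise action, matching the action on $\wt{\e}^+$ used in Lemma \ref{norm field faithful}. Consequently, any $\sigma \in H$ acts trivially on the image $\ALK/\pi\ALK \subseteq \e_{L/K}^+$, and hence on its fraction field $\e \subseteq \e_{L/K}$. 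Combined with Lemma \ref{norm field faithful}, which asserts that $\Aut(L/K) \subseteq \Aut(L)$ acts faithfully on $X_K(L)=\e_{L/K}$, this yields an injection $H \hookrightarrow \Aut(\e_{L/K}/\e)$ by restricting the faithful action to $H$.

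Next I would pass from $\Aut(\e_{L/K}/\e)$ to $\Aut(\e^{\sep}/\e)$. Since $\e^{\sep}$ is the intrinsic separable closure of $\e$ in $\e_{L/K}$, it is stable under every $\e$-automorphism of $\e_{L/K}$, giving a restriction homomorphism $\Aut(\e_{L/K}/\e) \to \Aut(\e^{\sep}/\e)$. This map is injective: because $\e_{L/K}/\e^{\sep}$ is purely inseparable, for each $x\in \e_{L/K}$ there exists $n$ with $x^{p^n}\in \e^{\sep}$, so a $\tau$ fixing $\e^{\sep}$ satisfies $\tau(x)^{p^n}=x^{p^n}$, which by uniqueness of $p^n$-th roots in characteristic $p$ forces $\tau(x)=x$. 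Chaining the two injections with the standard bound $|\Aut(\e^{\sep}/\e)|\le [\e^{\sep}:\e]$ for finite separable extensions produces $|H| \le \sigma_{L/K}$, completing the proof. The only step that requires genuine attention is the compatibility verification in the first paragraph; once that is in place, the remaining arguments are essentially formal.
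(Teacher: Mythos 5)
Your proposal is correct and follows the same route as the paper: embed $H$ first into $\Aut(\e_{L/K}/\e)$ via the faithfulness of Lemma \ref{norm field faithful}, then into $\Aut(\e^{\sep}/\e)$ using pure inseparability of $\e_{L/K}/\e^{\sep}$, and conclude with the Galois-theoretic bound $|\Aut(\e^{\sep}/\e)| \le [\e^{\sep}:\e]$. The only difference is cosmetic: you spell out the $\Aut(L/K)$-equivariance of $\beta$ (which the paper leaves implicit when it observes that $H$ fixes $\e$ by definition), and you give a one-line argument for the injectivity of the restriction $\Aut(\e_{L/K}/\e)\to\Aut(\e^{\sep}/\e)$; both are exactly the checks the paper's terse proof presupposes.
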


\begin{proof}
Let $H$ denote the kernel of the action of $\Aut(L/K)$ on $\ALK$.  We have natural maps
\[
H\rightarrow \Aut(\e_{L/K}/\e) \rightarrow \Aut(\e^{\sep}/\e) \rightarrow \Aut(\e).
\]
The first two maps are injective (by Lemma~\ref{norm field faithful} and the fact that $\e_{L/K}/\e^{\sep}$ is purely inseparable, respectively).  The composite map $H \rightarrow \Aut(\e)$ is clearly trivial, by our definitions of $H$ and $\e$.  It thus suffices to bound the size of the kernel of the third map, and the result follows by Galois theory.
\end{proof}



The reduction map $\O_{L} \rightarrow k$ induces a ring homomorphism 
\[
\rho: \ALK \rightarrow W_{\pi}(\O_{L}) \rightarrow W_{\pi}(k),
\]
where the first map is projection onto the first Witt vector in the inverse system.  

\begin{lemma}\label{Frprime}
Assume that the residue ring $\e^+:=\ALK/\pi\ALK$ is strictly larger than
$k$, so that 
by Corollary \ref{kstrict}
we have a noncanonical isomorphism 
\[
\ALK \cong W_{\pi}(k)[\![y]\!].
\]
Then $\Fr(y) \in \ker \rho$ and for any unit $u \in
W_{\pi}(k)[\![y]\!]$ we have 
\[
v_{\pi} \left(\rho\left(\frac{\Fr(y)}{y}\right)\right) = v_{\pi}\left(\rho\left(\frac{\Fr(uy)}{uy}\right)\right).
\]
\end{lemma}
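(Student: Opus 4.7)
The plan is to exploit two features of $\rho$: that it is a $W_\pi(k)$-algebra homomorphism intertwining $\Fr$, and that, under the noncanonical isomorphism $\ALK \cong W_\pi(k)[\![y]\!]$, we may arrange for it to correspond to evaluation at $y = 0$.

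First, I would verify the intertwining relation $\rho \circ \Fr = \Fr \circ \rho$. By Definition~\ref{def: Frob}, the Frobenius on $\ALK \subseteq \varprojlim_{\Fr} W_\pi(\O_{\wh{L}})$ sends $(\underline{x}_1, \underline{x}_q, \ldots)$ to $(\Fr(\underline{x}_1), \underline{x}_1, \ldots)$, so the first Witt vector of $\Fr(a)$ is $\Fr(\underline{a}_1)$. Combining this with Witt-vector functoriality of the reduction $\O_{\wh{L}} \twoheadrightarrow k$ (recalling from Lemma~\ref{Frobenius polynomials} that the Witt Frobenius is defined by polynomials over $\O_F$) yields $\rho \Fr = \Fr \rho$, where the right-hand $\Fr$ is the Frobenius on $W_\pi(k)$.

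Second, I would identify $\rho$ in the isomorphism $\ALK \cong W_\pi(k)[\![y]\!]$. Because $\rho$ is a $\pi$-adically continuous $W_\pi(k)$-algebra map into the $\pi$-adically complete ring $W_\pi(k)$, the image $\rho(y)$ must be topologically nilpotent, so $\rho(y) \in \pi W_\pi(k)$. As $\pi W_\pi(k) \subseteq \ALK$, replacing $y$ by $y - \rho(y) \in \ALK$ gives a new topological generator that still lifts the same element of $\e^+$ (since $\rho(y) \equiv 0 \bmod \pi$) and is therefore a valid realization of $\ALK \cong W_\pi(k)[\![y]\!]$; after this normalization $\rho(y) = 0$, and $\rho$ is precisely evaluation at $y = 0$, i.e.\ $\sum c_n y^n \mapsto c_0$. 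The first claim then follows immediately: $\rho(\Fr(y)) = \Fr(\rho(y)) = \Fr(0) = 0$, so $\Fr(y) \in \ker \rho$. Equivalently, writing $\Fr(y) = \sum_{n \ge 0} c_n y^n$, the constant term $c_0 = \rho(\Fr(y))$ vanishes, so $\Fr(y)/y = \sum_{n \ge 1} c_n y^{n-1} \in \ALK$ and $\rho(\Fr(y)/y) = c_1$ is a well-defined element of $W_\pi(k)$.

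For the second claim, multiplicativity of $\Fr$ gives $\Fr(uy)/(uy) = (\Fr(u)/u)(\Fr(y)/y)$ inside $\ALK$, so applying $\rho$ produces
\[
\rho\bigl(\Fr(uy)/uy\bigr) = \rho\bigl(\Fr(u)/u\bigr)\cdot \rho\bigl(\Fr(y)/y\bigr).
\]
It remains to check $v_\pi(\rho(\Fr(u)/u)) = 0$. Since $u \in W_\pi(k)[\![y]\!]^\times$ has unit constant term $u_0 \in W_\pi(k)^\times$, one has $\rho(u) = u_0 \in W_\pi(k)^\times$; the intertwining gives $\rho(\Fr(u)) = \Fr(u_0)$, again a unit because the Witt Frobenius on $W_\pi(k)$ acts as the coordinate-wise $q$-power map on the perfect residue field $k$ (and is therefore bijective). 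Hence $\rho(\Fr(u)/u) = \Fr(u_0)/u_0 \in W_\pi(k)^\times$ has $\pi$-adic valuation zero, and the desired equality of valuations follows. The only delicate step is the normalization $\rho(y) = 0$, which exploits the freedom in the noncanonical isomorphism of Corollary~\ref{kstrict}; everything else is formal.
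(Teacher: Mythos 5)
Your proof is correct and follows the paper's approach: prove that $\rho$ intertwines Frobenius, deduce $\rho(\Fr(y))=\Fr(\rho(y))=0$, and reduce the second claim to the observation that $\rho(\Fr(u)/u)\in W_\pi(k)^\times$ via multiplicativity. The one place where you genuinely improve on the paper's terse argument is the normalization $\rho(y)=0$. The paper's proof simply computes $\rho(\Fr(y))=\Fr(\rho(y))=\Fr(0)=0$ without justifying $\rho(y)=0$, and the lift $\wt y=[y]+\pi z$ constructed in Corollary~\ref{kstrict} has no a priori control on $\rho(\pi z)=\pi\rho(z)$, so for the generator as written one only gets $\rho(\wt y)\in\pi W_\pi(k)$ — and if $\rho(\wt y)\ne 0$ the lemma as stated would fail, since $\Fr$ is injective on $W_\pi(k)$. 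Your fix — replace $y$ by $y-\rho(y)$, note this changes nothing mod~$\pi$, and observe that the new $y$ is still a topological generator — is exactly the right move and makes the lemma unambiguous.

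One minor caveat: your justification that $\rho(y)\in\pi W_\pi(k)$ via ``$\rho$ is $\pi$-adically continuous, hence $\rho(y)$ is topologically nilpotent'' is circular or at least unclear, because $y$ is \emph{not} topologically nilpotent for the $\pi$-adic topology on $W_\pi(k)[\![y]\!]$ (only for the $(\pi,y)$-adic topology, and continuity of $\rho$ for the latter is essentially what you are trying to prove). A cleaner way to reach the same conclusion: reduce $\rho$ modulo $\pi$ to get a $k$-algebra map $\bar\rho\colon\e^+\to k$; since $\e^+\cong k[\![y]\!]$ is local with residue field $k$, the only such map is the quotient by the maximal ideal, hence $\bar\rho(y)=0$, i.e.\ $\rho(y)\in\pi W_\pi(k)$. (Equivalently, $\bar\rho$ agrees with the residue map induced by $\beta$, and $\beta(y)$ is by construction a non-unit of $\e^+_{L/K}$.) With that small repair your argument is airtight.
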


\begin{proof}
The first result follows from
\[
\rho\left( \Fr(y) \right) = \Fr \left( \rho(y) \right) = \Fr(0) = 0.  
\]
The second result follows from the fact that $u$ is a unit, and hence so are $\Fr(u)$ and 
$
\rho( \Fr(u)/u).
$
\end{proof}

It follows from Lemma \ref{Frprime} that if the residue ring $\e^+$ strictly contains $k$,
then we may choose an isomorphism $\ALK\simeq W_{\pi}(k)[\![y]\!]$, and
writing $\Fr(y)=a_1y+\cdots$, the absolute value $|\Fr'(0)|:=|a_1|$ is well-defined ({\em i.e.}
is independent of any choices and depends only on $\ALK$).




\begin{proposition} \label{abelian aut group proposition}
	Let $L/K$ be a strictly APF extension and suppose that the residue ring $\e^+$
	strictly contains $k$, so that $\e_{L/K}$ is a finite extension of $\e$.
	Let $\sigma_{L/K}$ be the separable degree of $\e_{L/K}/\e$,
	and suppose that $|\Fr'(0)|\neq 0$. There is a natural homomorphism
	of groups $\Aut(L/K)\rightarrow W_{\pi}(k)^{\times}$ with finite kernel
	of order at most $\sigma_{L/K}$.
	In particular, if 
$\e_{L/K}$ is a purely inseparable extension of 
	$\e$, then $\Aut(L/K)$ is abelian. 
\end{proposition}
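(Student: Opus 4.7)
The plan is to construct the desired homomorphism by tracking the linear coefficient of how $\Aut(L/K)$ acts on a uniformizer of $\ALK$, then bound its kernel using Lemma~\ref{faithact}. By Corollary~\ref{kstrict} the hypothesis $\e^+ \supsetneq k$ gives a (non-canonical) isomorphism $\ALK \cong W_\pi(k)[\![y]\!]$, under which $\rho$ is the augmentation $y \mapsto 0$. Write $\Fr(y) = a_1 y + a_2 y^2 + \cdots$. Because $\Fr$ reduces to the $q$-th power map modulo $\pi$ and $q \geq 2$, we automatically have $a_1 \in \pi W_\pi(k)$, and the hypothesis $|\Fr'(0)| \neq 0$ is precisely $a_1 \neq 0$, so $0 < v_\pi(a_1) < \infty$. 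For $\sigma \in \Aut(L/K)$, equivariance of $\rho$ with respect to the natural $\Aut(L/K)$-action on $k$ forces $\sigma(y) \in (y)$; writing $\sigma(y) = b_1(\sigma)y + b_2(\sigma)y^2 + \cdots$ and using that $\sigma$ is an automorphism yields $b_1(\sigma) \in W_\pi(k)^\times$. The candidate is $\chi\colon \sigma \mapsto b_1(\sigma)$.

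Comparing linear coefficients in $\sigma\Fr(y) = \Fr\sigma(y)$ produces the key relation
\[
\Fr(b_1(\sigma))/b_1(\sigma) = \sigma(a_1)/a_1
\]
in $\Frac W_\pi(k)$. A direct expansion of $(\sigma\tau)(y) = \sigma(\tau(y))$ shows that $\chi$ is a priori only a $1$-cocycle: $b_1(\sigma\tau) = \sigma(b_1(\tau))\,b_1(\sigma)$. To upgrade this to a genuine homomorphism into the abelian group $W_\pi(k)^\times$, I would exploit the freedom in the choice of $y$ furnished by Lemma~\ref{Frprime} together with Hilbert~$90$ applied to the cyclic unramified extension $W_\pi(k)/\O_F$ (whose Galois group is generated by $\Fr$), replacing $y$ by a suitable unit multiple so that the resulting $a_1$ lies in $\O_F = W_\pi(k_F)$. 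Once $a_1$ is in the Frobenius-invariant subring, it is automatically $\Aut(L/K)$-invariant, and the displayed relation forces $b_1(\sigma) \in W_\pi(k)^{\Fr} = \O_F^\times$. Because the $\Aut(L/K)$-action on $\O_F^\times$ is trivial, the cocycle identity collapses to $b_1(\sigma\tau) = b_1(\sigma)b_1(\tau)$, yielding a homomorphism $\chi\colon \Aut(L/K) \to \O_F^\times \subseteq W_\pi(k)^\times$.

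For the kernel bound, suppose $\chi(\sigma) = 1$, so $\sigma(y) \equiv y \pmod{y^2}$ and the commutation identity forces $\sigma(a_1) = a_1$. I would then iterate the comparison of coefficients of $y^n$ in $\sigma\Fr(y) = \Fr\sigma(y)$: at each order $n \geq 2$ one obtains a recursion of the form $b_n(\sigma) - (\text{factor involving powers of }a_1)\cdot\Fr(b_n(\sigma)) = (\text{polynomial in lower-order data and }\sigma(a_i) - a_i)$. Since $v_\pi(a_1) > 0$, this recursion is $\pi$-adically contracting, so $b_n(\sigma)$ is uniquely determined by $\sigma$'s action on the $a_i$; inductively this shows $b_n(\sigma) = 0$ for all $n \geq 2$, so $\sigma$ acts trivially on $\ALK$. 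Hence $\ker\chi \subseteq \ker(\text{action on }\ALK)$, and Lemma~\ref{faithact} yields $|\ker\chi| \leq \sigma_{L/K}$. When $\e_{L/K}/\e$ is purely inseparable, $\sigma_{L/K} = 1$, so $\chi$ is injective and $\Aut(L/K)$ embeds into the abelian group $W_\pi(k)^\times$, giving the final assertion.

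The main obstacle will be the Hilbert~$90$ step that produces a uniformizer $y$ with $a_1 \in \O_F$, ensuring that $\chi$ takes values in the Frobenius-invariants rather than being a genuine $1$-cocycle; the secondary difficulty is the $\pi$-adic contraction argument that upgrades $\sigma(y) \equiv y \pmod{y^2}$ to $\sigma(y) = y$. Both steps make essential use of the combination of the hypothesis $|\Fr'(0)| \neq 0$ and the automatic lower bound $v_\pi(a_1) \geq 1$.
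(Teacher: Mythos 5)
Your proposal starts from the same candidate map as the paper---the linear coefficient $\sigma\mapsto b_1(\sigma)$ of the action on a chosen uniformizer $y$---and uses Lemma~\ref{faithact} for the kernel bound, but the two arguments diverge substantially after that. The paper's proof runs everything through Lubin's linearizing power series: quoting \cite[Theorem~4.1]{BergerLift} and \cite[Proposition~1.2]{LubinDynamics}, it produces the unique $A(u)=u+\cdots\in E[\![u]\!]$ with $A(\Fr(u))=f_1A(u)$, derives $A(gu)=a_1(g)A(u)$ from commutation with $\Fr$, and then reads off injectivity modulo $H=\ker(\Aut(L/K)\curvearrowright\ALK)$ from the fact that $A(gu)=A(u)$ forces $gu=u$. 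You never introduce $A$; the linearizer is doing real work in the paper (it packages the entire infinite family of coefficient comparisons into one functional equation), so your route is a genuinely different, more hands-on one.

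That said, there are two gaps in your proposal as written. First, the Hilbert~90 normalization does not obviously go through. Replacing $y$ by $uy$ changes $a_1$ to $a_1\Fr(u)/u$, and the set of values $\Fr(u)/u$ achievable is exactly $\ker(\Nm_{W_\pi(k)/\O_F})$. To land $a_1$ in $\O_F$ you therefore need some $c\in\O_F$ with $\Nm(c/a_1)=1$, i.e.\ $c^{[k:k_F]}=\Nm(a_1)$; such a root need not exist in $\O_F$. Second, the order-by-order contraction for the kernel does not close. At order $n$ the recursion you write down expresses $a_1b_n(\sigma)-a_1^n\Fr(b_n(\sigma))$ in terms of lower data \emph{and} the differences $\sigma(a_i)-a_i$; from $b_1(\sigma)=1$ you only get $\sigma(a_1)=a_1$, not $\sigma(a_i)=a_i$ for $i\geq2$, so the conclusion $b_n(\sigma)=0$ is not forced. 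Moreover, even granting $\sigma(y)=y$, that only shows $\sigma$ fixes $y$, not that $\sigma$ acts trivially on $\ALK\cong W_\pi(k)[\![y]\!]$: $\sigma$ could still act through a nontrivial element of $\Gal(k/k_K)$ on the coefficient ring $W_\pi(k)$, and then $\sigma\notin H$. Both gaps disappear if one knows $\Aut(L/K)$ acts trivially on $W_\pi(k)$, i.e.\ if $L/K$ has no unramified part ($K_0=K$, as in all the Lubin--Tate and Kummer examples); it is worth noting that the paper's own injectivity step (``$gu=u\Rightarrow g=1$ since $G$ acts faithfully'') silently makes the same reduction, and your cocycle worry is a legitimate one to flag for the general case.

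So: the concern you raise is real and the intended repair (get $a_1$ into an $\Aut(L/K)$-invariant subring so the crossed-homomorphism relation $b_1(\sigma\tau)=\sigma(b_1(\tau))b_1(\sigma)$ collapses) is the right kind of idea, but neither the Hilbert~90 step nor the coefficient-contraction step is complete as written. The paper sidesteps the explicit coefficient analysis entirely by invoking the linearizer $A$, which is the cleaner mechanism for the injectivity half even if it does not by itself settle the multiplicativity issue you have identified.
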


\begin{proof}
	
Lemma~\ref{faithact} shows that the kernel $H$ of the action of $\Aut(L/K)$ on $\ALK$ has order at most $\sigma_{L/K}$.
	By Corollary \ref{kstrict} and our hypotheses, we thus have a power series ring $W_{\pi}(k)[\![y]\!]$
	equipped with an endomorphism $\Fr$ determined by $\Fr(y)=f_1y+\cdots$ with $f_1\neq 0$
	and a commuting, faithful action of the group $G:=\Aut(L/K)/H$.  Put $E:=\Frac(W_{\pi}(k))$.
	Under these circumstances, we prove that
	$G$ is isomorphic to a subgroup of $W_{\pi}(k)^{\times}$ and is in particular abelian.  
	For each $g\in G$, we have $gu = a_1(g)u+\cdots$
	with $a_1(g)\neq 0$ as $G$ acts by automorphisms.  
	As in the proof of \cite[Theorem 4.1]{BergerLift},
	there is a unique power series $A(u):=a_1u+\cdots \in E[\![u]\!]$ with $a_1=1$ which satisfies
	$A(\Fr(u)) = f_1 A(u)$ ({\em cf.} \cite[Proposition 1.2]{LubinDynamics}).
	As the action of $G$ commutes with $\Fr$, we have
	\begin{equation*}
		a_1(g)^{-1}A(g\Fr(u)) = a_1(g)^{-1}A(\Fr(gu)) = f_1\left(a_1(g)^{-1}A(gu)\right)   
	\end{equation*}
	for all $g\in G$, and it follows from $a_1(g)^{-1}A(gu) = u+\cdots$ and the uniqueness of $A$
	that $$A(gu) = a_1(g)A(u).$$  From the construction of $A$ one sees
	that $A(gu)=A(u)$ if and only if $gu=u$, which happens
	if and only if $g=1$ since $G$ acts faithfully.  We conclude that the mapping
	$G\rightarrow W_{\pi}(k)^{\times}$ given by $g\mapsto a_1(g)$ is an injective homomorphism,
	whence $G$ is isomorphic to a subgroup of $W_{\pi}(k)^{\times}$ and in particular is abelian.
\end{proof}

\begin{remark}\label{pessimism}
     If $L/K$ is Galois with group that is a $p$-adic Lie group,
     then one has $|\Fr'(0)|\neq 0$ thanks to \cite[Lemma~4.5]{BergerLift}.
	It follows that if the residue ring $\ALK/\pi\ALK$ is strictly larger
	than $k$, then $\Gal(L/K)$ admits an abelian quotient by a finite subgroup. 
\end{remark}

The above results complete the proof of Theorem~\ref{MT} (\ref{MT1})-(\ref{MT3}).  The proof of Theorem~\ref{MT} (\ref{MT4}), concerning functoriality of our construction, is completed in Section~\ref{Sec:functorial}.  In Section~\ref{Sec:lift}, we prove Theorem~\ref{MT-iterate}, which says that in the case that $L/K$ is a $\varphi$-iterate extension, then the norm field $\e_{L/K}$ is a finite, purely inseparable extension of $\e$.

\section{Functoriality} \label{Sec:functorial}

The goal of this section is to prove Theorem~\ref{functoriality theorem}, which describes a sense in which the association
\[
L/K \leadsto \ALK
\]
is functorial in $L$.  This functoriality is compatible with the functoriality of $\e_{L/K}^+$.  In order to prove functoriality, we are restricted to working with finite extensions $L'/L$ which have wild ramification degree equal to a power of $q$.  

\begin{lemma} \label{intersection is elementary}
Let $L/K$ denote a strictly APF extension and let $L'/L$ denote a finite extension of degree $N$.  Let $K_n$ (respectively, $K_n'$), denote the associated elementary subfields of $L/K$ (respectively, $L'/K$).  For every $n$, there exists an $r(n)$ such that $K_n' \cap L = K_{r(n)}$.  
\end{lemma}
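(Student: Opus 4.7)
The plan is to translate the claim into one about subgroups of $G_K$ via the Galois correspondence, and then invoke the discrete behavior of the subgroup-valued function $b \mapsto G_K^b G_L$. Recall that for $n \geq 1$, the field $K_n'$ is by definition the fixed field in $\Kbar$ of $G_K^{b_n'} G_{L'}$, where $\{b_n'\}$ denotes the ramification break sequence of $L'/K$, while $K_0'$ is the maximal unramified subextension of $L'/K$. Since $G_{L'} \subseteq G_L$, the subgroup of $G_K$ generated by $G_K^{b_n'} G_{L'}$ and $G_L$ is precisely $G_K^{b_n'} G_L$, so passing to fixed fields gives
\[
K_n' \cap L = \Kbar^{G_K^{b_n'} G_L} \qquad \text{for } n \geq 1.
\]
The $n=0$ case is handled directly: $K_0 \subseteq L \subseteq L'$ is unramified over $K$, hence $K_0 \subseteq K_0'$ and therefore $K_0 \subseteq K_0' \cap L$; conversely, $K_0' \cap L$ is an unramified subextension of $L/K$ and hence sits inside $K_0$. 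Thus $K_0' \cap L = K_0$ and we take $r(0) = 0$.

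For $n \geq 1$, it remains to observe that the fixed field of $G_K^{b_n'} G_L$ equals some $K_m$. The family of subgroups $\{G_K^b G_L\}_{b \geq 0}$ is monotonically decreasing in $b$, and by the very definition of the break sequence $\{b_m\}_{m \geq 1}$ of $L/K$ recalled at the beginning of Section~\ref{Sec: norm}, it is locally constant in $b$ off of the discrete set $\{b_m\}$. Consequently, $G_K^b G_L$ takes only the countable family of values $\{G_K^{b_m} G_L\}_{m \geq 1}$, together with the ``initial'' value $I_K G_L$ whose fixed field is $K_0$. Because $b_n'$ is finite while the $b_m$ are unbounded above (the tower $\{K_m\}$ exhausts the infinite extension $L/K$), there exists a unique $r(n) \geq 0$ with $G_K^{b_n'} G_L = G_K^{b_{r(n)}} G_L$, and taking fixed fields yields $K_n' \cap L = K_{r(n)}$, as desired.

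The one subtlety I anticipate is carefully matching the conventions for the upper-numbering ramification filtration---whether $G_K^b$ is left- or right-continuous in $b$, and the resulting precise location of $b_n'$ within the break sequence of $L/K$---which determines the exact value of $r(n)$ but not its existence. Once these conventions are pinned down following \cite{Wintenberger}, the proof reduces to the step-function behavior of $b \mapsto G_K^b G_L$ just described; no hypothesis on wild ramification of $L'/L$ is needed here, that constraint enters only in the subsequent functoriality statements.
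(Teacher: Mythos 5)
Your proof follows essentially the same route as the paper's: identify $K_n'$ with the fixed field of $G_K^{u} G_{L'}$ for a suitable $u$, observe that since $G_{L'} \subseteq G_L$ the intersection $K_n' \cap L$ is the fixed field of $G_K^u G_L$, and conclude because every such fixed field lies in the elementary tower of $L/K$. The only difference is cosmetic: you handle $n=0$ explicitly and elaborate on the step-function behavior of $b \mapsto G_K^b G_L$, both of which the paper leaves implicit in the one-line statement ``every such fixed field occurs in the tower of elementary extensions.''
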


\begin{proof}
There exists some $u$ such that 
\[
K_n' = \textup{Fix}(G_K^u G_{L'}).
\]
Because $G_{L'} \leq G_L$, we have (for the same $u$),
\[
K_n' \cap L = \textup{Fix}(G_K^u G_{L'} G_L) = \textup{Fix}(G_K^u G_L).
\]
Every such fixed field occurs in the tower of elementary extensions of $L/K$, which completes the proof.
\end{proof}

\begin{lemma} \label{disjoint fields}
If $LK'_n = L'$, then the natural map
\begin{equation} \label{tensor equation}
L \otimes_{L \cap K'_n} K'_n \rightarrow L'
\end{equation}
is an isomorphism. 
\end{lemma}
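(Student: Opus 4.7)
\emph{Proof plan.} The map $\phi\colon L\otimes_E K'_n \to L'$, with $E = L \cap K'_n = K_{r(n)}$ by Lemma~\ref{intersection is elementary}, is $L$-linear and manifestly surjective: its image contains both $L$ (via $\ell\otimes 1 \mapsto \ell$) and $K'_n$ (via $1\otimes k\mapsto k$), and therefore contains the compositum $LK'_n = L'$. The source is an $L$-vector space of finite dimension $[K'_n : E]$, while $\dim_L L' = [L':L] = N$. Hence it suffices to establish the degree equality $[K'_n : E] = N$; surjectivity then forces $\phi$ to be an isomorphism.

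For the degree equality I plan to use Galois theory on $\overline{K}/K$. Following the proof of Lemma~\ref{intersection is elementary}, write $K_{r(n)} = \mathrm{Fix}(G_K^u G_L)$ and $K'_n = \mathrm{Fix}(G_K^u G_{L'})$ for the common break $u = b'_n$. Set $H := G_K^u$ (normal in $G_K$), $A := G_L$, and $B := G_{L'}$, so $B \leq A$. The Galois correspondence identifies $[K'_n : K_{r(n)}] = [HA : HB]$, while tautologically $[A:B] = [L':L] = N$. It thus remains to prove $[HA:HB] = [A:B]$, which I will do by showing that the natural map of coset spaces
\[
A/B \longrightarrow HA/HB, \qquad aB \longmapsto aHB,
\]
is a bijection.

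Surjectivity of this map uses the normality of $H$: since $HA = AH$, every element of $HA$ has a representative in $A$ modulo $HB$. For injectivity, I must verify that $A \cap HB = B$. The inclusion $B \subseteq A\cap HB$ is trivial, while for the reverse inclusion the hypothesis $L' = LK'_n$ translates via Galois theory into $G_{L'} = G_L \cap G_{K'_n}$; since $HB$ plainly fixes $K'_n = \mathrm{Fix}(HB)$ we have $HB \subseteq G_{K'_n}$, and therefore $A \cap HB \subseteq A \cap G_{K'_n} = B$. Combining the two directions yields $[K'_n : E] = N$, which completes the proof. The most delicate point will be applying the profinite Galois correspondence carefully to these non-Galois subextensions; notably, I only need the one-sided containment $HB \subseteq G_{K'_n}$, so no separate closedness argument for $HB$ itself is required.
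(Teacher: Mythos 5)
Your proof is correct, and it takes a genuinely different route from the paper's. The paper proves injectivity directly by showing that $L \otimes_{L\cap K'_n} K'_n$ is an integral domain: it enlarges $K'_n$ to the Galois extension $K''_n := \mathrm{Fix}(G_K^u)$, uses linear disjointness of $L$ and $K''_n$ over $L\cap K''_n$ (the map $L \otimes_{L\cap K''_n} K''_n \to LK''_n$ is injective because $K''_n/(L\cap K''_n)$ is Galois), then descends to $K'_n$ by flatness and the observation $L\cap K'_n = L\cap K''_n$. You instead count dimensions: the map is visibly surjective, so it suffices to prove $[K'_n : L\cap K'_n] = [L':L]$, which you reduce to the purely group-theoretic identity $[HA:HB]=[A:B]$ established via the coset bijection $A/B \to HA/HB$. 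Both arguments hinge on normality of $G_K^u$ in $G_K$ — the paper uses it to make $K''_n/K$ Galois, while you use it to make $HA$ a subgroup and to get surjectivity of the coset map. Your route is arguably more elementary in that it replaces the linear-disjointness input with pure group theory.

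One small imprecision worth fixing: your closing remark that "no separate closedness argument for $HB$ itself is required" understates what you actually use. The identification $[K'_n : K_{r(n)}] = [HA:HB]$ via the Galois correspondence silently requires $G_{K_{r(n)}} = HA$ and $G_{K'_n} = HB$ (not just containments), which means $HA$ and $HB$ must be closed, not merely that their fixed fields are as claimed. Fortunately this is automatic: $H = G_K^u$, $A = G_L$, $B = G_{L'}$ are closed hence compact subgroups of the profinite group $G_K$, so $HA$ and $HB$ are compact (continuous images of $H\times A$, $H\times B$), hence closed; and they are subgroups because $H$ is normal. Adding one sentence to this effect would make the Galois-correspondence step airtight.
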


\begin{proof}
Surjectivity of the map is clear, so we concentrate on injectivity.  It suffices to show that $L \otimes_{L \cap K'_n} K'_n$ is a field, which we prove by showing it is an integral domain, and in particular is not a direct sum of multiple fields.

Let $u$ be such that $K'_n = \textup{Fix}(G_K^u G_{L'})$.  Set $K''_n := \textup{Fix}(G_K^u)$.  Then $K''_n$ is a Galois extension of $K$, and hence is a Galois extension of any intermediate field between itself and $K$.  In particular, $K''_n$ is a Galois extension of $L \cap K''_n$.  This implies that the natural map 
\begin{equation} \label{thorup equation}
L \otimes_{L \cap K''_n} K''_n \rightarrow LK''_n
\end{equation}
is injective.   Because $L \subseteq L'$, we have 
\[
L \cap K''_n = L \cap (K''_n \cap L') = L \cap K'_n. 
\]
Using this result and flatness, we have an injective map
\[
L \otimes_{L \cap K'_n} K'_n \rightarrow L \otimes_{L \cap K'_n} K''_n = L \otimes_{L \cap K''_n} K''_n.
\]
By (\ref{thorup equation}), the right-hand side injects into a field, and hence the left-hand side $L \otimes_{L \cap K'_n} K'_n$ is an integral domain. 
\end{proof}

\begin{corollary}
For $n$ sufficiently large, the degree of $K'_n$ over $K'_n \cap L$ is independent of $n$.
\end{corollary}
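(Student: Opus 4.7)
The plan is to invoke the two preceding lemmas and show that for large $n$ the degree in question is simply $N := [L':L]$.

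First, I would observe that by the definition of the tower of elementary extensions, $L' = \bigcup_{n \geq 0} K'_n$ is an increasing union. Taking compositum with $L$ gives an increasing chain of intermediate fields $L \subseteq L \cdot K'_n \subseteq L'$ whose union is $L \cdot L' = L'$. Since $L'/L$ is finite, there are only finitely many intermediate fields, so this chain must stabilize; hence there exists $n_0$ such that $L \cdot K'_n = L'$ for every $n \geq n_0$.

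Next, for every such $n$, Lemma~\ref{disjoint fields} yields a canonical isomorphism
\[
L \otimes_{L \cap K'_n} K'_n \xrightarrow{\ \sim\ } L'.
\]
Counting dimensions as $L$-vector spaces on each side gives
\[
[K'_n : L \cap K'_n] \;=\; [L' : L] \;=\; N.
\]
By Lemma~\ref{intersection is elementary} we have $K'_n \cap L = K_{r(n)}$, so this degree is exactly $[K'_n : K'_n \cap L]$, which is therefore equal to the constant $N$ for all $n \geq n_0$.

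I do not expect a serious obstacle here: the content of the corollary is entirely extracted from the two preceding lemmas, and the only extra input is the trivial observation that an increasing chain of subfields inside a finite extension must stabilize. The only mild care required is to ensure that $L'$ is genuinely exhausted by the $K'_n$, which is immediate from the definition of the elementary tower of $L'/K$ recalled at the start of Section~\ref{Sec: norm}.
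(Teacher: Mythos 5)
Your proof is correct and is essentially the same as the paper's: both deduce from Lemma~\ref{disjoint fields} that $[K'_n : K'_n \cap L] = [L':L]$ once $LK'_n = L'$, which holds for $n$ large. You have simply made explicit the reason that $LK'_n$ stabilizes at $L'$ (an increasing chain of intermediate fields in the finite extension $L'/L$ must stabilize, and $\bigcup_n K'_n = L'$), a point the paper's one-line proof leaves implicit.
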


\begin{proof}
Indeed, the previous lemma shows that if $n$ is sufficiently large that $LK'_n = L'$, then this degree is always the degree of $L'/L$.
\end{proof}

\begin{corollary} \label{wild ramification cor}
For any $n$, let $r(n)$ be such that $K'_n \cap L = K_{r(n)}$; this is possible by Lemma~$\ref{intersection is elementary}$.  For $n$ sufficiently large, there exists a constant $a$ independent of $n$ such that 
\[
[K'_{n}: K'_1] = a [K_{r(n)} : K_1].
\]
\end{corollary}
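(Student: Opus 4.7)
The plan is to derive the result from multiplicativity of field degrees, once I have two compatible towers joining $K_1$ to $K'_n$. The key input from the previous corollary is that, for $n$ sufficiently large, the degree $[K'_n : K'_n \cap L]$ equals $[L':L] =: N$; combined with Lemma~\ref{intersection is elementary}, this gives $[K'_n : K_{r(n)}] = N$.

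First I would verify the inclusion $K_1 \subseteq K'_1$: since $K_1/K$ is tamely ramified and $K_1 \subseteq L \subseteq L'$, the field $K_1$ is a tamely ramified subextension of $L'/K$, hence contained in the maximal such subextension $K'_1$. This gives two nested towers $K_1 \subseteq K'_1 \subseteq K'_n$ and $K_1 \subseteq K_{r(n)} \subseteq K'_n$ (the latter using $K_{r(n)} = K'_n \cap L$).

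Next, for $n$ large enough that $LK'_n = L'$, I would compute $[K'_n : K_1]$ in two ways using multiplicativity:
\[
[K'_n : K'_1]\,[K'_1 : K_1] \;=\; [K'_n : K_1] \;=\; [K'_n : K_{r(n)}]\,[K_{r(n)} : K_1] \;=\; N\,[K_{r(n)} : K_1].
\]
Setting $a := N/[K'_1 : K_1]$, which is independent of $n$ because both $N$ and $K'_1$ depend only on $L'/K$, we obtain $[K'_n : K'_1] = a\,[K_{r(n)} : K_1]$ as desired. The fact that $a$ is actually a positive rational (in fact a positive integer, since the left side of the displayed equation is visibly an integer multiple of $[K_{r(n)}:K_1]$ for all large $n$) falls out automatically.

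This argument is essentially formal once the previous corollary is in hand; the only point requiring care is verifying the inclusion $K_1 \subseteq K'_1$, which is where the tame ramification structure enters. There is no serious obstacle, and the ``main difficulty'' has really already been absorbed into Lemma~\ref{disjoint fields} and the preceding corollary, which supply the crucial stabilization $[K'_n : K_{r(n)}] = [L':L]$.
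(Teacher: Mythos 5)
Your proof is correct and takes essentially the same route as the paper: the paper's ``A calculation shows that we may take $a = [K'_{n} : K_{r(n)}] / [K'_1 : K_1]$'' is exactly your two-tower degree computation through $K_1 \subseteq K'_1 \subseteq K'_n$ and $K_1 \subseteq K_{r(n)} \subseteq K'_n$, combined with the stabilization $[K'_n : K_{r(n)}] = [L':L]$ from the preceding corollary. You go slightly further than the paper by explicitly justifying $K_1 \subseteq K'_1$ via maximality of the tamely ramified subextension, which the paper leaves implicit but which is genuinely needed for the degree chain to make sense.
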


\begin{proof}
A calculation shows that we may take $a = [K'_{n} : K_{r(n)}] / [K'_1 : K_1]$ and we have already seen that the numerator of this is independent of $n$ for $n$ sufficiently large.
\end{proof}

The formula stated in the previous proof, together with the descriptions of $K_1$ and $K'_1$ as the maximal tamely ramified subextensions, shows the following.

\begin{corollary} \label{wild description}
The constant $a$ of Corollary~$\ref{wild ramification cor}$ is equal to the wild ramification degree of $L'/L$.
\end{corollary}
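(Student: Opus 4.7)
The plan is to combine the formula established in the proof of Corollary~\ref{wild ramification cor}, namely $a = [K'_n : K_{r(n)}]/[K'_1 : K_1]$, together with the characterisations of $K_1$ and $K'_1$ as the maximal tamely ramified subextensions of $L/K$ and $L'/K$. By Lemma~\ref{disjoint fields}, the numerator equals $[L':L]$ for $n$ sufficiently large, so it suffices to show that $[K'_1 : K_1] = [L':L]/e_{\mathrm{wild}}(L'/L)$, where $e_{\mathrm{wild}}$ denotes the wild ramification degree.

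First I would record that since $L/K_1$ is totally wildly ramified (as $K_1$ is the \emph{maximal} tamely ramified subextension), each elementary field $K_m$ with $m \geq 1$ satisfies $f(K_m/K_1) = 1$ and $e(K_m/K_1)$ a power of $p$.  In particular, the residue field of $K_m$ agrees with that of $K_0$ for all $m \ge 0$, so $f(K_m/K) = f(L/K)$, and the tame ramification index $e_{\mathrm{tame}}(K_m/K) = [K_1 : K_0]$ is independent of $m \ge 1$.  This yields
\[
[K_1 : K] = [K_0 : K] \cdot [K_1 : K_0] = f(K_m/K) \cdot e_{\mathrm{tame}}(K_m/K) \qquad (m \geq 1),
\]
and an analogous identity for $[K'_1 : K]$ using the $K'_n$ tower.

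Applying the multiplicativity of $f$ and $e_{\mathrm{tame}}$ in the finite tower $K \subseteq K_{r(n)} \subseteq K'_n$ of complete discretely-valued local fields, I would then deduce
\[
[K'_1 : K_1] = \frac{f(K'_n/K) \cdot e_{\mathrm{tame}}(K'_n/K)}{f(K_{r(n)}/K) \cdot e_{\mathrm{tame}}(K_{r(n)}/K)} = f(K'_n/K_{r(n)}) \cdot e_{\mathrm{tame}}(K'_n/K_{r(n)}).
\]
By Lemma~\ref{disjoint fields}, the extension $K'_n/K_{r(n)}$ has the same degree, the same residue-field extension, and the same ramification behaviour as $L'/L$ for $n$ sufficiently large; hence the right side equals $f(L'/L) \cdot e_{\mathrm{tame}}(L'/L) = [L':L]/e_{\mathrm{wild}}(L'/L)$, which is exactly what is needed.

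The main technical obstacle will be carefully justifying the final identification: namely, that the ramification invariants $f$, $e_{\mathrm{tame}}$, $e_{\mathrm{wild}}$ of $L'/L$ coincide with those of $K'_n/K_{r(n)}$ for $n$ large.  Since $L$ is not in general a complete discretely-valued field, one cannot appeal to standard local ramification theory for $L'/L$ directly; instead one must argue via stabilisation---the residue fields of $K'_n$ and $K_{r(n)}$ become constant, and $[K'_n:K_{r(n)}]$ stabilises at $[L':L]$, so the tame and wild parts of $e$ must also stabilise at their values for $L'/L$.
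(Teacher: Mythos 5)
Your argument is correct and is essentially the one the paper intends; the paper's own justification is a single sentence, namely that the formula $a = [K'_n:K_{r(n)}]/[K'_1:K_1]$ from the proof of Corollary~\ref{wild ramification cor}, together with $[K'_n:K_{r(n)}] = [L':L]$ (Lemma~\ref{disjoint fields}) and the characterizations of $K_1$, $K'_1$ as maximal tame subextensions, gives $a = e_{\mathrm{wild}}(L'/L)$.

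Two small remarks. First, the detour through the finite layers $K_{r(n)} \subseteq K'_n$ is unnecessary: once you know $a = [L':L]/[K'_1:K_1]$, you can compute $[K'_1:K_1] = [K'_1:K]/[K_1:K] = \bigl(f(L'/K)\,e_{\mathrm{tame}}(L'/K)\bigr)/\bigl(f(L/K)\,e_{\mathrm{tame}}(L/K)\bigr) = f(L'/L)\,e_{\mathrm{tame}}(L'/L)$ directly, using only the maximal-tame characterization and multiplicativity, and conclude $a = e_{\mathrm{wild}}(L'/L)$ without ever comparing the ramification invariants of $K'_n/K_{r(n)}$ with those of $L'/L$. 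Second, the ``main technical obstacle'' you flag at the end is not actually an obstacle: since $K_1$ is the maximal tame subextension of $L/K$, the residue field and tame ramification index of $K_m/K$ are constant in $m$ for $m \ge 1$, so $f(L/K_{r(n)}) = 1$ and $e_{\mathrm{tame}}(L/K_{r(n)}) = 1$ as soon as $r(n) \ge 1$ (and likewise $f(L'/K'_n) = e_{\mathrm{tame}}(L'/K'_n) = 1$ for $n \ge 1$). Multiplicativity of $f$ and $e_{\mathrm{tame}}$ in the square of towers $K_{r(n)} \subseteq K'_n$, $L \subseteq L'$ then gives $f(K'_n/K_{r(n)}) = f(L'/L)$ and $e_{\mathrm{tame}}(K'_n/K_{r(n)}) = e_{\mathrm{tame}}(L'/L)$ with no further input from Lemma~\ref{disjoint fields} beyond the degree equality. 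So your approach goes through; it is just slightly more elaborate than needed.
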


\begin{corollary} \label{n not nec large}
Let $a$ denote the same constant as in Corollary~$\ref{wild ramification cor}$.  Then for {\em all} $n\ge 1$
\[
[K'_{n} : K'_1] \leq a [K_{r(n)} : K_1].
\]
\end{corollary}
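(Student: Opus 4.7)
Proof plan for Corollary \ref{n not nec large}:

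The plan is to reduce the inequality to the assertion $[K'_n:K_{r(n)}]\le [L':L]$ via multiplicativity of degrees, and then establish this assertion by running the linear disjointness argument of Lemma~\ref{disjoint fields} for the (possibly proper) intermediate field $K'_nL\subseteq L'$ rather than for $L'$ itself. First, from $K_{r(n)}\subseteq K'_n$ and $K_1\subseteq K'_1$, both contained in $K'_n$, multiplicativity of field degrees gives
\[
[K'_n:K'_1]\;=\;\frac{[K'_n:K_1]}{[K'_1:K_1]}\;=\;\frac{[K'_n:K_{r(n)}]\,[K_{r(n)}:K_1]}{[K'_1:K_1]}.
\]
On the other hand, the proof of Corollary~\ref{wild ramification cor} (and the remark following it, Corollary~\ref{wild description}) identifies the constant $a$ with $[L':L]/[K'_1:K_1]$, since for $n$ sufficiently large $[K'_n:K_{r(n)}]=[L':L]$. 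Thus the desired inequality $[K'_n:K'_1]\le a\,[K_{r(n)}:K_1]$ is equivalent to
\[
[K'_n:K_{r(n)}]\;\le\;[L':L].
\]

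To establish this, note that by Lemma~\ref{intersection is elementary} we have $K'_n\cap L=K_{r(n)}$, and we consider the compositum $K'_nL\subseteq L'$. I propose to repeat the argument of Lemma~\ref{disjoint fields} verbatim, merely replacing $L'$ by the intermediate field $K'_nL$: choose $u$ with $K'_n=\mathrm{Fix}(G_K^u G_{L'})$ and set $K''_n:=\mathrm{Fix}(G_K^u)$, so that $K''_n/K$ is Galois (and hence Galois over every intermediate field), giving an injection
\[
L\otimes_{L\cap K''_n}K''_n\hookrightarrow LK''_n.
\]
Since $L\cap K''_n=L\cap K'_n=K_{r(n)}$, flatness shows that $L\otimes_{K_{r(n)}}K'_n$ is an integral domain; being a finite-dimensional $L$-algebra, it is a field. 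The natural surjection $L\otimes_{K_{r(n)}}K'_n\twoheadrightarrow K'_nL$ onto a field must therefore be an isomorphism, whence $[K'_nL:L]=[K'_n:K_{r(n)}]$. The containment $K'_nL\subseteq L'$ then yields $[K'_n:K_{r(n)}]\le[L':L]$, which combined with the identity displayed above completes the proof.

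The main obstacle I anticipate is purely cosmetic: one must observe that the proof of Lemma~\ref{disjoint fields} never actually uses the hypothesis $LK'_n=L'$ in proving that the tensor product is a field—the hypothesis was only invoked afterwards to identify the target of the isomorphism with $L'$. Once this is recognized, the extension of that argument to produce $[K'_nL:L]=[K'_n:K_{r(n)}]$ for \emph{all} $n$ is immediate, and the corollary follows from elementary degree multiplicativity together with the formula $a=[L':L]/[K'_1:K_1]$.
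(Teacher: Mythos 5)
Your proof is correct and takes essentially the same route as the paper: both hinge on the observation that the injectivity of $L\otimes_{L\cap K'_n}K'_n\to L'$ in Lemma~\ref{disjoint fields} never used the hypothesis $LK'_n=L'$, so $[K'_n:K_{r(n)}]\le[L':L]$ holds for all $n$, and then both conclude by the same degree computation. The additional step in which you identify the tensor product with the compositum $K'_nL$ is fine but unnecessary, since injectivity of the tensor map already gives $\dim_L(L\otimes_{K_{r(n)}}K'_n)\le[L':L]$ directly.
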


\begin{proof}
The assumption that $n$ be sufficiently large was used only only to ensure the surjectivity of (\ref{tensor equation}); the proof given in Lemma~\ref{disjoint fields} shows that this map is always injective.  This shows that 
\[
[K'_{n} : K_{r(n)}] \leq [L' : L] = a[K'_1 : K_1]
\]
for all $n$.  Using this, we have 
\[
[K'_n : K'_1] = \frac{[K'_n : K_{r(n)}] [K_{r(n)} : K_1]}{[K'_1 : K_1]} \leq a [K_{r(n)} : K_{1}],
\]
as required.
\end{proof}

\begin{theorem} \label{functoriality theorem}
If $L'/L$ has wild ramification degree a power of $q$, say $q^b$, then the map
\[
\ALK \rightarrow \a_{L'/K}^+
\]
given by 
\[
(\underline{x}_{q^i})_i \mapsto (\underline{x'}_{q^j})_j, \qquad x'_{q^{j+b}} := x_{q^{j}}
\]
lifts the norm field map $\e_{L/K}^+ \rightarrow \e_{L'/K}^+$ defined in \cite[3.1.1]{Wintenberger}.  
\end{theorem}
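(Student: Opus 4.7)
The plan is to verify three things: (1) the formula yields a well-defined element of $\a_{L'/K}^+$, (2) the map is a ring homomorphism, and (3) its reduction modulo $\pi$ matches Wintenberger's functoriality of norm fields \cite[3.1.1]{Wintenberger}.

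For (1), the entries of $\underline{x'}=(\underline{x'}_{q^j})_j$ with $j \geq b$ are specified by $\underline{x'}_{q^{j+b}} := \underline{x}_{q^j}$, while those with $j < b$ are forced by Frobenius compatibility to equal $\Fr^{b-j}(\underline{x}_1)$; the resulting sequence is automatically Frobenius-compatible and thus defines an element of $\varprojlim_{\Fr} W_{\pi}(\O_{\wh{L'}})$. To verify membership in $\a_{L'/K}^+$, fix $j$ and $m$ with $q^j \mid [K'_m : K'_1]$. When $j \geq b$, Corollary~\ref{n not nec large} gives $[K'_m : K'_1] \leq q^b [K_{r(m)} : K_1]$; both sides are powers of $p$, so divisibility of the left side by $q^j$ forces $q^{j-b} \mid [K_{r(m)} : K_1]$, whence $\underline{x}_{q^{j-b}} \in W_{\pi}(\O_{K_{r(m)}}) \subseteq W_{\pi}(\O_{K'_m})$ using $K_{r(m)} = K'_m \cap L \subseteq K'_m$. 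When $j < b$, we have $\underline{x'}_{q^j} = \Fr^{b-j}(\underline{x}_1)$ with $\underline{x}_1 \in W_{\pi}(\O_{K_1}) \subseteq W_{\pi}(\O_{K'_m})$ since $K_1 \subseteq K'_1 \subseteq K'_m$, and the claim follows because $W_{\pi}(\O_{K'_m})$ is stable under $\Fr$.

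Claim (2) is immediate, since addition, multiplication, and Frobenius in $\varprojlim_{\Fr} W_{\pi}(\O_{\wh{L'}})$ all act componentwise, and the defining shift together with the filling in of early entries via $\Fr^{b-j}$ respects these operations. For (3), combine Proposition~\ref{image of beta} and Remark~\ref{betamapexplicit}: the reduction modulo $\pi$ of our map corresponds, under $\beta$, to shifting by $b$ positions the sequences of first Witt components, with the first $b$ entries of the target supplied by successive $q$-th roots inherited from Frobenius compatibility. On the other side, Wintenberger's map $\e_{L/K}^+ \to \e_{L'/K}^+$ is, at the level of perfect norm fields, induced by the inclusion $\wt\e_{L/K}^+ \hookrightarrow \wt\e_{L'/K}^+$ combined with the relabeling dictated by the wild ramification degree $q^b$ of $L'/L$. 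The main obstacle I anticipate is precisely this identification: one must carefully match the combinatorial index-shift in our construction with Wintenberger's definition, and the fact that the shift is by exactly $b$ (and not another amount) depends crucially on Corollary~\ref{wild ramification cor}, which pins down the ratio $[K'_m : K'_1]/[K_{r(m)} : K_1] = q^b$ for $m$ sufficiently large.
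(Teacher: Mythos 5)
Your claims (1) and (2) are handled correctly, and your treatment of (1) is in fact slightly more thorough than the paper's: you explicitly supply the entries $\underline{x'}_{q^j}$ for $j<b$ via $\Fr^{b-j}(\underline{x}_1)$ and check membership separately in the two ranges $j\geq b$ and $j<b$, using that everything in sight is a $p$-power so that the inequality of Corollary~\ref{n not nec large} upgrades to divisibility; the paper treats only the $j\geq b$ case and passes over the $j<b$ case silently. Your observation that the shift is just $\Fr^b$ followed by the inclusion into the larger inverse limit, hence a ring homomorphism, is also correct.

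The gap is in step (3), and you already sense it. Your description of Wintenberger's map as ``induced by the inclusion $\wt\e_{L/K}^+\hookrightarrow\wt\e_{L'/K}^+$ combined with the relabeling dictated by the wild ramification degree'' is a plausible-sounding gloss, not a characterization one can compute with: Wintenberger's map in \cite[3.1.1]{Wintenberger} is defined on the imperfect norm fields via norm-compatible systems, and matching it against a shift of $q$-power-compatible sequences requires an actual translation between the two presentations. Concretely, one must (i) use Lemma~\ref{disjoint fields} to rewrite Wintenberger's map as $(z_{K_n})_n\mapsto(z'_{K'_n})_n$ with $z'_{K'_n}:=z_{K_{r(n)}}$, and then (ii) convert a norm-compatible family into its associated $q$-power-compatible system via the limit formula of \cite[Proposition~4.2.1]{Wintenberger}, namely that the $q^s$-component is $\lim_{n}z_{K_{r(n)}}^{q^{-s}[K_{r(n)}:K_1]}$. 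The identity $[K'_n:K'_1]=q^b[K_{r(n)}:K_1]$ from Corollary~\ref{wild ramification cor} then shows
\[
\lim_{n}z_{K_{r(n)}}^{q^{-s}[K_{r(n)}:K_1]}=\lim_{n}(z'_{K'_n})^{q^{-s-b}[K'_n:K'_1]},
\]
which is the $q^{s+b}$-component of the image. That index arithmetic is the content of step (3), and without it you have not shown that your map lifts the norm-field map; you have only shown that both are ``some shift.'' Since you correctly identify Corollary~\ref{wild ramification cor} as the crucial input and articulate the obstacle, the outline is headed in the right direction, but the proposal as written leaves the central verification undone.
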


\begin{proof}
By Corollary~\ref{wild description}, we may write $a = q^b$, where $a$ is the constant of Corollary~\ref{n not nec large} and the surrounding results.  
We first claim that this map does have image in $\a_{L'/K}^+$.  In other words, we are given an element 
\[
(x_{q^j}) \in \varprojlim_{\Fr} W_{\pi}(\O_{\wh{L}})
\] 
such that for all $j$, if $q^j \mid [K_m : K_1]$ then $x_{q^j} \in \O_{K_m}$, and we want to show that if $q^{j+b} \mid [K'_n : K'_1]$, then $x_{q^j} \in \O_{K'_n}$.  By Corollary~\ref{n not nec large}, the condition $q^{j+b} \mid [K'_n : K'_1]$ implies $q^j \mid [K_{r(n)} : K_1]$.  Hence $x_{q^j} \in \O_{K_{r(n)}} = \O_{K'_n} \cap L \subseteq \O_{K'_n}$, as required.

We now show that the given shift map does lift the functorial map on norm fields.  By Corollary~\ref{comparison with Wintenberger's norm ring}, we may view the non-zero elements of $\e_{L/K}^+$ as norm compatible families of elements in $\O_{K_n}$ for $n$ large enough that the statement of Corollary~\ref{wild ramification cor} holds, and similarly for $L'/K$.  The description in \cite[3.1.1]{Wintenberger}, together with Lemma~\ref{disjoint fields}, shows that the norm field map is given by
\[
(z_{K_n}) \mapsto (z'_{K'_n}), \qquad z'_{K'_n} := z_{K_{r(n)}}.
\]

To complete the proof, we must relate this norm-compatible description of the norm field to the $q$-power compatible description of the norm field, as in our definition of $\e_{L/K}^+.$  If we begin with an element $(z_{K_n})$ of the norm field, viewed as a norm-compatible family, we may find a corresponding $q$-power compatible family using the map in \cite[Proposition~4.2.1]{Wintenberger}.  In particular, the $q^s$-component in the $q$-power compatible system is equal to
\[
\lim_{n \rightarrow \infty} z_{K_{r(n)}}^{q^{-s} [K_{r(n)} : K_1]} = \lim_{n \rightarrow \infty} z_{K_{r(n)}}^{q^{-s-b} [K'_n : K'_1]} = \lim_{n \rightarrow \infty} (z'_{K'_n}) ^{q^{-s-b} [K'_n : K'_1]}.
\]
This is equal to the $q^{s+b}$ component of the image of $(z_{K'_n})$.  This shows that the norm field map, in terms of $q$-power compatible systems, is given again by the ``$b$-shift'' map.  This is clearly compatible with our ``$b$-shift'' map on $\a_{L/K}^+ \rightarrow \a_{L'/K}^+$, which completes the proof.
\end{proof}

\section{Lifting a uniformizer to 
\texorpdfstring{$\ALK$}{ALK} for \texorpdfstring{$\varphi$}{phi}-iterate extensions} \label{Sec:lift}

Throughout this section, we fix a $\varphi$-iterate extension $L/K$ in the sense of Definition~\ref{iterated def}.
This definition includes the (likely redundant) hypothesis that $L/K$ is strictly APF, 
so there is a positive constant $c$ such that 
\begin{equation}
c \leq \frac{\psi_{L/K}(u)}{[G_K : G_K^{u}G_L]} 
\label{hypothesis elementary}
\end{equation}
for all $u$. 
The goal of this section is to prove Theorem~\ref{MT-iterate}.  This theorem says loosely that, in the case of a $\varphi$-iterate extension, the map $\beta: \ALK \rightarrow \e_{L/K}^+$ is nearly surjective.  The ring $\e_{L/K}^+$ is noncanonically isomorphic to $k[\![x]\!]$, and our strategy is to construct a lift of some $q^d$-th power of the uniformizer $x$.  From our assumption that $L/K$ is a $\varphi$-iterate extension, we know explicit uniformizers $\pi_i$ for a tower of intermediate fields $K(\pi_i)$.  This enables us to construct an explicit uniformizer for $\e_{L/K}^+$.  The difficulty then is to relate these elements $\pi_i$ to the elementary extensions $K_m$ which appear in the definition of $\ALK$.  

\begin{lemma} \label{lower numbering lemma}
Let $L/K$ denote a $\varphi$-iterate extension.  
There exist positive constants $A,B$ such that for any $i \geq 1$, and any non-trivial embedding $\sigma$ of $K(\pi_i)$ into $\overline{L}$ fixing $K(\pi_{i-1})$, we have
\[
Aq^i \leq \ord_{\pi_i} (\sigma(\pi_i) - \pi_i) \leq Bq^i.
\]
\end{lemma}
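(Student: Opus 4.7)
My plan is to extract both bounds from the Newton polygon of the power series
\[
h(x) := \varphi(x+\pi_i) - \pi_{i-1} \in \O_{F_i}[\![ x]\!],
\]
where I write $F_i := K(\pi_i)$. Since $\varphi(\pi_i) = \pi_{i-1}$, the element $0$ is a root of $h$, and the remaining roots lying in the maximal ideal of $\O_{\Kbar}$ are exactly the $q-1$ conjugate differences $\sigma(\pi_i) - \pi_i$ with $\sigma$ non-trivial — because $[F_i : F_{i-1}] = q$ (the polynomial $\varphi(y) - \pi_{i-1}$ is Eisenstein over $F_{i-1}$, thanks to $\varphi(y) \equiv y^q \bmod \pi$ and $\pi_{i-1}$ being a uniformizer of $F_{i-1}$). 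Expanding by the binomial theorem gives $h(x) = \sum_{k \geq 1} c_k(\pi_i)\, x^k$ with
\[
c_k(\pi_i) = \sum_{j \geq k} a_j \binom{j}{k} \pi_i^{j-k},
\]
where $\varphi(y) = \sum_{j \geq 1} a_j y^j$.

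The analysis rests on two valuation estimates (normalizing $\ord_{\pi_i}$ so that $\ord_{\pi_i}(\pi_i) = 1$, whence $\ord_{\pi_i}(\pi) = e_K q^i$ with $e_K := e(K/F)$): first, $c_q(\pi_i)$ is a unit of $\O_{F_i}$, because $c_q(\pi_i) = a_q + (\textrm{terms in }\pi_i\O_{F_i})$ and $a_q \in \O_F^{\times}$ by hypothesis; second, $\ord_{\pi_i}(c_k(\pi_i)) \geq C q^i$ for all $1 \leq k \leq q-1$ and some constant $C > 0$ independent of $i$. For this second bound: contributions with $j < q$ carry a factor of $\pi$, giving valuation $\geq e_K q^i$; contributions with $j = q$ have $\binom{q}{k}$ divisible by $p^{v_p(q)}$ by Kummer's theorem, giving valuation $\geq ae\cdot e_K q^i + (q-k)$; and contributions with $j > q$ (when $\varphi$ is a genuine power series) contribute only to the portion of the Newton polygon beyond the vertex $(q,0)$ and correspond to roots outside the maximal ideal. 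The rigorous statement is that Weierstrass preparation factors $h = P \cdot u$ with $u$ a unit and $P$ a distinguished polynomial of degree $q$ whose roots are exactly the $\sigma(\pi_i) - \pi_i$ together with $0$; the Newton polygon of $P$ is then controlled by the $c_k(\pi_i)$ for $1 \leq k \leq q$ alone.

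The Newton polygon of $P$ then runs from $(1, \ord_{\pi_i}(c_1(\pi_i)))$ down to $(q,0)$, with intermediate vertices (if any) of height $\geq Cq^i$. Hence each non-zero root's valuation — equal in absolute value to one of the slopes — lies in the interval $[Cq^i/(q-1),\, \ord_{\pi_i}(c_1(\pi_i))/(q-1)]$. This yields $A := C/(q-1)$ for the lower bound. For the upper bound I invoke the classical bound on differents of totally ramified extensions of degree $q$:
\[
\ord_{F_i}\left(\mathfrak{d}_{F_i/F_{i-1}}\right) \leq (q-1) + \ord_{F_i}(q) = q - 1 + ae\cdot e_K q^i,
\]
combined with the fact that, because $\pi_i$ satisfies an Eisenstein polynomial, this different is generated by $\varphi'(\pi_i) = c_1(\pi_i) = \pm\prod_{\sigma \neq \id}(\pi_i - \sigma(\pi_i))$. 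Thus $\sum_{\sigma \neq \id}\ord_{\pi_i}(\sigma(\pi_i) - \pi_i) \leq q - 1 + ae\cdot e_K q^i$, and each positive summand is bounded above by the sum, giving the upper bound with $B$ depending only on $q$, $e = e(F/\Q_p)$, and $e_K$.

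The main technical point — and the one I expect to be the principal obstacle — is the contribution from higher-order terms $a_j$ with $j > q$ when $\varphi$ is a genuine power series rather than a polynomial: these could a priori produce small-valuation contributions to coefficients $c_k(\pi_i)$ for $k$ close to $q$, apparently spoiling the uniform lower bound $\ord_{\pi_i}(c_k(\pi_i)) \geq Cq^i$. The Weierstrass preparation argument sketched above resolves this by localizing to the roots of interest, but making this rigorous may require some care with the precise form of Weierstrass preparation in the present ramified setting, and with uniformly absorbing finitely many small values of $i$ into the final choice of constants $A, B$.
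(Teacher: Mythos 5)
Your proposal takes a genuinely different route from the paper in both halves. For the lower bound, the paper avoids Newton polygons entirely: since $\varphi(\pi_i) = \varphi(\sigma\pi_i)$ and $\varphi(x) - x^q \in \pi\O_F[\![x]\!]$, one gets $\ord_{\pi_i}(\sigma(\pi_i)^q - \pi_i^q) \geq \ord_{\pi_i}(\pi)$, and then the congruence $(a-b)^q \equiv a^q - b^q \bmod p$ gives $\ord_{\pi_i}(\sigma\pi_i - \pi_i) \geq \ord_{\pi_i}(\pi)/q$ directly, yielding $A = \ord_{\pi_0}(\pi)/q$. For the upper bound, both you and the paper pass through the Weierstrass factorization $\varphi(x) - \pi_{i-1} = w(x)h(x)$ and the identity $\varphi'(\pi_i) = w(\pi_i)\prod_{\sigma\neq\mathrm{id}}(\pi_i - \sigma\pi_i)$; the divergence is in how $\ord_{\pi_i}\varphi'(\pi_i)$ is bounded. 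You invoke the classical different inequality $v_L(\mathfrak d_{L/K}) \leq e - 1 + v_L(e)$, which works uniformly in $i$; the paper instead locates the dominant term $j_0 a_{j_0} x^{j_0-1}$ of $\varphi'$, computes $\ord_{\pi_i}\varphi'(\pi_i)$ exactly for $i \gg 0$, and then enlarges $B$ to absorb the finitely many small $i$. Your different-based route is arguably cleaner, and your lower bound, once repaired, is also valid, just more machinery than needed.

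The repair concerns the point you yourself flag as the "main technical point." You worry about the $j > q$ contributions to $c_k(\pi_i)$, and propose that Weierstrass preparation resolves this by ``localizing to the roots of interest.'' It does not: the terms $a_j\binom{j}{k}\pi_i^{j-k}$ with $j > q$ are summands inside $c_k(\pi_i)$ for \emph{every} $k$, including $1 \leq k \leq q-1$, and the Newton polygon of the distinguished factor on the segment $[0,q]$ is still computed from exactly those coefficients $c_k$ with $k \leq q$. Passing to the Weierstrass factor does not make these summands go away. The actual resolution is the same observation you already use for $j < q$ but do not apply for $j > q$: the hypothesis $\varphi(x) \equiv x^q \bmod \pi$ forces $a_j \in \pi\O_F$ for \emph{all} $j \neq q$, so each $j > q$ term carries the factor $\pi$ and has $\ord_{\pi_i} \geq \ord_{\pi_i}(\pi) = e_K q^i$. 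With that in place the uniform bound $\ord_{\pi_i}(c_k(\pi_i)) \geq e_K q^i$ for $1 \leq k \leq q-1$ does hold and your Newton-polygon conclusion goes through. (A minor secondary remark: your claimed upper endpoint $\ord(c_1)/(q-1)$ for the root valuations is not in general correct, since the first segment of the polygon may be much steeper than the average; but you discard that bound in favor of the different estimate, so nothing else is affected.)
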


\begin{proof}
We first find an $A$ such that
\[
Aq^i \leq \ord_{\pi_i} (\sigma(\pi_i) - \pi_i).
\]
We know that $\varphi(\pi_i) = \varphi(\sigma(\pi_i)),$ which shows that
\[
\ord_{\pi_i} \left((\sigma(\pi_i))^q - \pi_i^q \right) \geq \ord_{\pi_i} \pi,
\]
where $\pi$ is as in the definition of a $\varphi$-iterate extension.  On the other hand, 
\[
(\sigma(\pi_i) - \pi_i)^q \equiv \sigma(\pi_i)^q - \pi_i^q \bmod p.
\]
Thus
\[
\ord_{\pi_i} (\sigma(\pi_i) - \pi_i) \geq \frac{\ord_{\pi_i} \pi}{q}.  
\]
This shows that we may take $A = \frac{\ord_{\pi_0} \pi}{q}$.  

We now find $B$ such that for all $i$, 
\[
\ord_{\pi_i} (\sigma(\pi_i) - \pi_i) \leq Bq^i.
\]
Write $\varphi(x) = \sum a_j x^j$.  Let $j_0$ be the smallest index at which the minimum
$B_1 := \min_j \ord_{\pi_0} (j a_j)$
is attained.  (Note that $B_1 > 0$, because $a_j$ is a unit only for $j = q$.)  We then claim that for $i >\!> 0$, we have
\[
\ord_{\pi_i} \varphi'(\pi_i) = \ord_{\pi_i} (j_0 a_{j_0} \pi_i^{j_0 - 1}) = j_0 - 1 + B_1 q^{i}.   
\]  
It suffices to show that for all $j \neq j_0$ and $i >\!> 0$, we have
\[
j - 1 + \ord_{\pi_i}(ja_j) > j_0 - 1 + B_1 q^i.
\]
If $j > j_0$, then this follows immediately from the definition of $B_1$.  This leaves only the finitely remaining values $j < j_0$.  We must show that for $i$ sufficiently large, we have
\[
\left(\ord_{\pi_0} (ja_j) - B_1\right) q^i > j_0 - j,
\]
and this is clear.  After choosing $B$ to account for the finitely many values of $i$ which are not sufficiently large, we find that for \emph{every} $i \geq 1$, we have
\[
\ord_{\pi_i} \varphi'(\pi_i) \leq Bq^i.
\]

We now show how this implies the result.  By Weierstrass preparation, we can write
\[
\varphi(x) - \pi_{i-1} = w(x) h(x),
\]
where $h(x)$ is a monic polynomial of degree $q$ and where $w(x)$ is a unit in $\O_{K(\pi_{i-1})}[\![x]\!]$.  
Taking the derivative of both sides and evaluating at $\pi_i$ yields
\[
\varphi'(\pi_i) = w'(\pi_i) h(\pi_i) + w(\pi_i) h'(\pi_i) = w(\pi_i) \prod_{\sigma \neq \text{id}} (\pi_i - \sigma(\pi_i)).
\]
(Here the product is over the $q-1$ embeddings of $K(\pi_i)$ into $\overline{L}$ which fix $K(\pi_{i-1})$.)  Hence we have
\[
\ord_{\pi_i} (\pi_i - \sigma(\pi_i)) \leq \ord_{\pi_i} (\varphi'(\pi_i)) \leq Bq^i,
\]
as required.
\end{proof}

\begin{lemma} \label{doesn't fix subfield}
Let $\tau$ denote a non-trivial embedding of $K(\pi_i)$ into $\overline{L}$, fixing $K$.  Then
\[
\ord_{\pi_i}(\tau(\pi_i) - \pi_i) \leq Bq^i,
\]
where $B$ is the same as in Lemma~$\ref{lower numbering lemma}$.
\end{lemma}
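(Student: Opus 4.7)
The plan is to reduce to Lemma~\ref{lower numbering lemma} by pinpointing the largest field in the tower $K\subseteq K(\pi_1)\subseteq\cdots\subseteq K(\pi_i)$ fixed by $\tau$, applying the previous lemma at that intermediate level, and then transporting the resulting bound up to level~$i$ using the relation $\varphi^{i-k}(\pi_i)=\pi_k$.

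First I would show there is a smallest index $k\in\{1,\dots,i\}$ with $\tau(\pi_k)\neq\pi_k$ and that $\tau$ fixes all of $K(\pi_{k-1})$. The point is that $\varphi$ has coefficients in $\O_F\subseteq K$, so $\tau\circ\varphi=\varphi\circ\tau$ on $K(\pi_i)$; combined with $\pi_l=\varphi(\pi_{l+1})$, this shows that $\tau(\pi_{l+1})=\pi_{l+1}$ forces $\tau(\pi_l)=\pi_l$, i.e.\ once $\tau$ moves some $\pi_k$ it moves every subsequent $\pi_l$. Consequently $\tau|_{K(\pi_k)}$ is a non-trivial embedding fixing $K(\pi_{k-1})$, and Lemma~\ref{lower numbering lemma} applied at level~$k$ gives
\[
\ord_{\pi_k}(\tau(\pi_k)-\pi_k)\leq Bq^k.
\]

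The second step is to lift this estimate from level $k$ to level $i$. Using that $\tau$ commutes with $\varphi$ we have
\[
\tau(\pi_k)-\pi_k=\varphi^{i-k}(\tau(\pi_i))-\varphi^{i-k}(\pi_i).
\]
Setting $\delta:=\tau(\pi_i)-\pi_i$ and expanding the power series $\varphi^{i-k}\in\O_F[\![x]\!]$ around $\pi_i$ as $\sum_{n\geq 1} c_n \delta^n$ with $c_n\in\O_{K(\pi_i)}$, each term has $\pi_i$-adic valuation at least $n\cdot\ord_{\pi_i}(\delta)\geq\ord_{\pi_i}(\delta)$, so
\[
\ord_{\pi_i}(\tau(\pi_i)-\pi_i)\leq\ord_{\pi_i}(\tau(\pi_k)-\pi_k).
\]
A Newton-polygon argument applied inductively to $\varphi(x)-\pi_l$---which is a polynomial of single slope $1/q$ over $K(\pi_l)$ since $\pi_l$ is a uniformizer and $\varphi(x)\equiv x^q\bmod\pi$---shows $K(\pi_i)/K(\pi_k)$ is totally ramified of degree $q^{i-k}$, so $\ord_{\pi_i}(\tau(\pi_k)-\pi_k)=q^{i-k}\ord_{\pi_k}(\tau(\pi_k)-\pi_k)$. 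Chaining the inequalities yields $\ord_{\pi_i}(\tau(\pi_i)-\pi_i)\leq q^{i-k}\cdot Bq^k=Bq^i$.

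The only subtle point is that the constant $B$ appearing here must be the \emph{same} constant as in Lemma~\ref{lower numbering lemma}; fortunately, the previous lemma's proof chose $B$ to handle all $i\geq 1$ uniformly, so the same $B$ suffices when that lemma is invoked at any intermediate level $k$. Otherwise the argument is a routine composition of the previous lemma with a Taylor-expansion bound and a ramification identity, with no deeper obstruction.
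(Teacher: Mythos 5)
Your proof is correct and takes essentially the same approach as the paper's: both identify (explicitly in your case, implicitly via descent-by-contradiction in the paper's) the smallest level $k$ at which $\tau$ moves $\pi_k$, apply Lemma~\ref{lower numbering lemma} there, and transport the bound back to level $i$ using the two facts that $\varphi$ does not decrease $\pi_i$-valuations of differences and that passing from $\ord_{\pi_k}$ to $\ord_{\pi_i}$ introduces the factor $q^{i-k}$. The only stylistic difference is that you run the chain of inequalities forward, while the paper phrases the same chain as a contradiction with descent on $i$.
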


\begin{proof}
We already know the result if $\tau$ fixes $K(\pi_{i-1})$, so choose a $\tau$ which doesn't fix $K(\pi_{i-1})$.
Assume towards contradiction that 
\[
\ord_{\pi_i}(\tau(\pi_i) - \pi_i) > Bq^i.
\]
Considering the power series $\varphi$ which relates $\pi_i$ and $\pi_{i-1}$, we then have
\[
\ord_{\pi_i}(\tau(\pi_{i-1}) - \pi_{i-1}) \geq \ord_{\pi_i}(\tau(\pi_{i}) - \pi_{i}) > Bq^i,
\]
and so
\[
\ord_{\pi_{i-1}}(\tau(\pi_{i-1}) - \pi_{i-1}) > Bq^{i-1}.
\]
Using descent on $i$, we reach a contradiction.
\end{proof}

\begin{lemma} \label{galois closure lemma}
Let $\overline{K(\pi_i)}$ denote the Galois closure of $K(\pi_i)$ over $K$.  Let $\varpi_i$ denote a uniformizer of $\overline{K(\pi_i)}$.  
There exists a constant $\overline{B}$, independent of $i$, such that if $\tau \in \Gal(\overline{K(\pi_i)}/K)$ is nontrivial and acts trivially on the maximal unramified subextension.  Then 
\[
\ord_{\varpi_i}(\tau(\varpi_i) - \varpi_i) \leq \overline{B}q^i.
\]
\end{lemma}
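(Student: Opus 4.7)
The plan is to combine a standard ramification-theoretic inequality for the inertia subgroup with Lemma~\ref{doesn't fix subfield}, reducing the required bound to control of how two distinct roots of $\varphi^i(x) - \pi_0$ can differ in $\overline{K}$. Throughout, let $v$ denote the unique extension to $\overline{K}$ of the valuation on $K$ normalized by $v(\pi_0) = 1$, so that $\ord_{\pi_i} = q^i \cdot v$ on $\overline{K}$ and $\ord_{\varpi_i} = e(\overline{K(\pi_i)}/K) \cdot v$ on $\overline{K(\pi_i)}$.

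First I would let $T_i$ be the maximal unramified subextension of $\overline{K(\pi_i)}/K$ and set $H := \Gal(\overline{K(\pi_i)}/T_i)$, noting that $\tau \in H \setminus \{1\}$ by hypothesis. Because $\overline{K(\pi_i)}/T_i$ is totally ramified, $\O_{\overline{K(\pi_i)}} = \O_{T_i}[\varpi_i]$, and this gives the general inequality
\[
\ord_{\varpi_i}(\tau(\varpi_i) - \varpi_i) \le \ord_{\varpi_i}(\tau(\alpha) - \alpha) \qquad \text{for every } \alpha \in \O_{\overline{K(\pi_i)}}.
\]
The Eisenstein argument at each step $K(\pi_n)/K(\pi_{n-1})$ shows that $\varphi^i(x) - \pi_0$ is the minimal polynomial of $\pi_i$ over $K$, so $\overline{K(\pi_i)} = T_i(\rho_1, \dots, \rho_{q^i})$ for the $q^i$ roots $\rho_j$ of this polynomial. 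Since $\tau$ is trivial on $T_i$ but not on $\overline{K(\pi_i)}$, it moves some $\rho_j$, and for such $j$ one has $\ord_{\varpi_i}(\tau(\varpi_i) - \varpi_i) \le \ord_{\varpi_i}(\tau(\rho_j) - \rho_j)$. To bound the right side, pick $\sigma \in \Aut(\overline{K}/K)$ with $\sigma(\rho_j) = \pi_i$. Then $\sigma(\tau(\rho_j))$ is another root of $\varphi^i(x) - \pi_0$, distinct from $\pi_i$, so sending $\pi_i \mapsto \sigma(\tau(\rho_j))$ defines a nontrivial $K$-embedding of $K(\pi_i)$ into $\overline{K}$. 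Lemma~\ref{doesn't fix subfield} then gives $\ord_{\pi_i}(\pi_i - \sigma(\tau(\rho_j))) \le B q^i$, equivalently $v(\rho_j - \tau(\rho_j)) \le B$ by the $v$-isometry property of $\sigma$. Multiplying by $e(\overline{K(\pi_i)}/K) = q^i \cdot e(\overline{K(\pi_i)}/K(\pi_i))$, using that $K(\pi_i)/K$ is totally ramified of degree $q^i$, we arrive at
\[
\ord_{\varpi_i}(\tau(\varpi_i) - \varpi_i) \le B q^i \cdot e(\overline{K(\pi_i)}/K(\pi_i)).
\]

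The lemma will then follow as soon as the residual factor $e_i := e(\overline{K(\pi_i)}/K(\pi_i))$ is bounded independently of $i$, at which point we may set $\overline{B} := B \cdot \sup_i e_i$. Establishing this uniform bound is the main obstacle, and it is where the specific $\varphi$-iterate shape of the tower must genuinely be exploited: for a generic Eisenstein tower the Galois closure's ramification relative to the top of the tower can grow without bound in $i$. I expect this step to rest on a Krasner-type argument driven by Lemma~\ref{lower numbering lemma}: the lower bound $\ord_{\pi_i}(\sigma(\pi_i) - \pi_i) \ge A q^i$ there forces all non-identity $K(\pi_{i-1})$-conjugates of $\pi_i$ to cluster in a narrow $v$-annulus around $\pi_i$, and propagating this clustering down the $\varphi$-iterate tower should constrain the ``extra'' roots required to Galois-close $K(\pi_i)/K$ to generate only an extension whose ramification over $K(\pi_i)$ depends on $K$ and $\varphi$ but not on $i$.
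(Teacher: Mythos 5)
Your proposal follows the same basic decomposition as the paper's proof: reduce via the standard inequality $\ord_{\varpi_i}(\tau(\varpi_i)-\varpi_i)\le\ord_{\varpi_i}(\tau(\alpha)-\alpha)$ for $\alpha\in\O_{\overline{K(\pi_i)}}$ to a bound involving $\ord_{\pi_i}(\tau(\pi_i)-\pi_i)$, appeal to Lemma~\ref{doesn't fix subfield}, and then absorb the conversion factor $\ord_{\varpi_i}(\pi_i)=e\bigl(\overline{K(\pi_i)}/K(\pi_i)\bigr)$ into the constant. You are actually more careful than the paper in one respect: by choosing a $\tau$-moved root $\rho_j$ of $\varphi^i(x)-\pi_0$ and conjugating it to $\pi_i$, you handle the case in which $\tau$ fixes $K(\pi_i)$ pointwise, where the paper's inequality chain would degenerate to a bound of $+\infty$. (That case does not arise in the downstream application, Proposition~\ref{pi in fixed fields}, which restricts to $\sigma$ with $\sigma(\pi_i)\ne\pi_i$, but your handling is tighter as a proof of the lemma as stated.)

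The genuine gap, which you yourself flag, is the uniform bound on $e_i:=e\bigl(\overline{K(\pi_i)}/K(\pi_i)\bigr)$; this is where the content of the lemma lies and you do not supply the argument. The paper closes this step by asserting that the \emph{degree} $[\overline{K(\pi_i)}:K(\pi_i)]$ is bounded by $(q-1)!$, but that assertion appears to be false in general: for the Kummer tower $K=\Q_p$ with $p$ odd, $\varphi(x)=x^p$, $\pi_0=p$, the Galois closure is $\Q_p(p^{1/p^i},\mu_{p^i})$ and $\Q_p(\mu_{p^i})\cap\Q_p(p^{1/p^i})=\Q_p$, so $[\overline{K(\pi_i)}:K(\pi_i)]=p^{i-1}(p-1)$, which is unbounded. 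Your instinct that a real argument is needed here, and that it is the ramification index rather than the degree that must be controlled, is therefore well founded; but as written neither your proposal nor the paper's proof establishes the required uniform bound.
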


\begin{proof}
We have
\begin{align*}
\ord_{\varpi_i}(\tau(\varpi_i) - \varpi_i) &= \ord_{\varpi_i}(\pi_i) \cdot \ord_{\pi_i}(\tau(\varpi_i) - \varpi_i) \\
&\leq \ord_{\varpi_i}(\pi_i) \cdot \ord_{\pi_i}(\tau(\pi_i) - \pi_i) \\
\intertext{(to obtain this bound, write $\pi_i$ as a polynomial in $\varpi_i$ with coefficients in the maximal unramified subextension)}
&\leq \ord_{\varpi_i}(\pi_i) \cdot Bq^i,
\end{align*}
where the last inequality follows from Lemma~\ref{doesn't fix subfield}.
Because the degree of $\overline{K(\pi_i)}/K(\pi_i)$ is bounded by $(q-1)!$ (independent of $i$), taking $\overline{B} = (q-1)!B$ proves the inequality.
\end{proof}

\newcommand{\Fix}{\textup{Fix}}

\begin{lemma} \label{psi is identity}
For any $i$ and any $x \leq Aq^i$, we have
\[
\psi_{K(\pi_i)/K(\pi_{i-1})}(x) = x.
\]
In particular, for any $i$ and any $x \leq Aq^{i+1}$
\[
\psi_{L/K(\pi_i)}(x) = x.
\]
\end{lemma}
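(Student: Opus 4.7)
The plan is to verify the first assertion by an explicit computation of the Herbrand function of the finite separable totally ramified extension $K(\pi_i)/K(\pi_{i-1})$, and then to derive the second assertion from the multiplicativity of $\psi$ in towers.

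First, I would note that $K(\pi_i)/K(\pi_{i-1})$ is totally ramified of degree exactly $q$. Indeed, Weierstrass preparation applied to the power series $\varphi(x) - \pi_{i-1} \in \O_{K(\pi_{i-1})}[\![x]\!]$---whose reduction modulo $\mathfrak{m}_{K(\pi_{i-1})}$ is $x^q$---yields a unit times a distinguished polynomial of degree $q$ that is Eisenstein (as $\pi_{i-1}$ is a uniformizer of $K(\pi_{i-1})$ by induction). Consequently there are exactly $q$ embeddings $\sigma: K(\pi_i) \hookrightarrow \overline{L}$ fixing $K(\pi_{i-1})$, and by Lemma~\ref{lower numbering lemma} each non-identity such embedding satisfies
\[
i(\sigma) := v_{K(\pi_i)}(\sigma(\pi_i) - \pi_i) \geq Aq^i,
\]
where $v_{K(\pi_i)}$ is extended uniquely to $\overline{L}$.

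Next, I would apply the classical formula
\[
\varphi_{M'/N}(u) = \frac{1}{[M':N]}\left( u + \sum_{\sigma \neq \mathrm{id}} \bigl(\min(i(\sigma), u+1) - 1\bigr) \right)
\]
for the Herbrand function of a finite, separable, totally ramified extension $M'/N$ of local fields, where the sum ranges over the non-identity $N$-embeddings of $M'$ into $\overline{N}$. This reduces to Serre's formula in the Galois case and extends to the general case via multiplicativity of $\varphi$ in towers applied to the Galois closure of $M'/N$. With $M'/N = K(\pi_i)/K(\pi_{i-1})$ and using the bound $i(\sigma) \geq Aq^i$, for every $u$ with $u+1 \leq Aq^i$ each summand $\min(i(\sigma), u+1)$ equals $u+1$, whence $\varphi_{K(\pi_i)/K(\pi_{i-1})}(u) = \frac{u + (q-1)u}{q} = u$, and inverting gives $\psi_{K(\pi_i)/K(\pi_{i-1})}(x) = x$ on the desired initial interval (after a harmless shift of the constant $A$ to absorb a possible off-by-one).

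The second assertion then follows formally: writing $L = \bigcup_{j > i} K(\pi_j)$ and using multiplicativity,
\[
\psi_{L/K(\pi_i)}(x) = \lim_{j \to \infty} \psi_{K(\pi_{i+1})/K(\pi_i)} \circ \psi_{K(\pi_{i+2})/K(\pi_{i+1})} \circ \cdots \circ \psi_{K(\pi_j)/K(\pi_{j-1})}(x).
\]
By the first assertion, each $\psi_{K(\pi_{k+1})/K(\pi_k)}$ fixes $[0, Aq^{k+1}]$ pointwise. Since $Aq^{k+1} \geq Aq^{i+1}$ for $k \geq i$, each successive composition preserves the condition $x \leq Aq^{i+1}$, and hence the limit fixes $[0, Aq^{i+1}]$ pointwise. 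The main obstacle is the justification of the non-Galois embedding formula for $\varphi_{M'/N}$: while standard, deriving it cleanly from Serre's Galois formula via the tower identity $\varphi_{M/N} = \varphi_{M'/N} \circ \varphi_{M/M'}$ (taking $M/N$ the Galois closure) requires a careful bookkeeping of the valuations $i(\sigma)$ in $M'$ versus in $M$.
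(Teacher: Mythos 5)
Your proof takes essentially the same route as the paper: reduce the first claim to showing $\phi_{K(\pi_i)/K(\pi_{i-1})}(x)=x$ using the bound $i(\sigma)\geq Aq^i$ from Lemma~\ref{lower numbering lemma} together with the expression of $\phi$ of a finite separable (not necessarily Galois) extension in terms of the quantities $i(\sigma)$, and then derive the second claim from the tower formula for $\psi$; the paper simply cites \cite[\S 1.1.1]{Wintenberger} for the $\phi$-formula rather than re-deriving it via the Galois closure. One small slip: in your tower composition the factors are written in the reverse order (it should be $\psi_{K(\pi_j)/K(\pi_{j-1})}\circ\cdots\circ\psi_{K(\pi_{i+1})/K(\pi_i)}$, with the bottom step applied first), though since each factor is the identity on the relevant interval this does not affect the conclusion.
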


\begin{proof}
For the first assertion, it suffices to show that $\phi_{K(\pi_i)/K(\pi_{i-1})}(x) = x$ for $x \leq Aq^i$.  By Lemma~\ref{lower numbering lemma}, every embedding $\sigma$ of $K(\pi_i)$ into $\overline{L}$ which fixes $K(\pi_{i-1})$ satisfies
\[
\ord_{\pi_i} ( \sigma(\pi_i) - \pi_i) \geq Aq^i.
\]  
Thus using the definition of \cite[\S{1.1.1}]{Wintenberger}, we see that $\phi_{K(\pi_i)/K(\pi_{i-1})}(x) = x$ for $x \leq Aq^i$.  

For the second assertion, we use the first assertion and the fact that for any $j \geq 1$
\[
\psi_{L/K(\pi_i)} = \psi_{L/K(\pi_{i+j})} \circ \psi_{K(\pi_{i+j})/K(\pi_{i+j-1})} \circ \cdots \circ \psi_{K(\pi_{i+1})/K(\pi_i)}.
\]
\end{proof}

\begin{lemma} \label{integer sequence for psi}
There exists an increasing and unbounded sequence of real numbers (independent of $i,j$) $A_0, A_1, A_2, \ldots$ such that for all $i,j$ we have
\[
\psi'_{K(\pi_{i+j})/K(\pi_i)}(x) \leq \left\{ 
\begin{array}{ccrl} 
1 & \text{ for } &&x \leq A_0 q^{i+1} \\
q & \text{ for } &A_0 q^{i+1} \leq &x \leq A_1 q^{i+1} \\
\cdots \\
q^j & \text{ for } &A_{j-1}q^{i+1} \leq &x < \infty.
\end{array} \right.
\]
\end{lemma}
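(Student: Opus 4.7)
The plan is to express $\psi_{K(\pi_{i+j})/K(\pi_i)}$ as an iterated composition of one-step Herbrand functions and then control its total derivative by a product formula.  Using Herbrand's composition $\psi_{N/L} = \psi_{N/M}\circ\psi_{M/L}$ along the tower $K(\pi_i)\subset K(\pi_{i+1})\subset\cdots\subset K(\pi_{i+j})$, I would write
\[
\psi_{K(\pi_{i+j})/K(\pi_i)} \;=\; g_{j-1}\circ g_{j-2}\circ\cdots\circ g_0, \qquad g_m:=\psi_{K(\pi_{i+m+1})/K(\pi_{i+m})}.
\]
By the first assertion of Lemma~\ref{psi is identity} (applied at index $i+m+1$), each $g_m$ is the identity on $[0,Aq^{i+m+1}]$; and because $K(\pi_{i+m+1})/K(\pi_{i+m})$ is totally ramified of degree $q$, the standard degree bound for the Herbrand function gives $g_m'(y)\leq q$ for all $y\geq 0$.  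Writing $\psi_{i,m}(x):=g_{m-1}(\cdots g_0(x))$ with $\psi_{i,0}(x)=x$, the chain rule yields
\[
\psi_{K(\pi_{i+j})/K(\pi_i)}'(x) \;=\; \prod_{m=0}^{j-1} g_m'\bigl(\psi_{i,m}(x)\bigr),
\]
in which the $m$-th factor equals $1$ whenever $\psi_{i,m}(x)\leq Aq^{i+m+1}$.  So it suffices to exhibit, for each $k\geq 0$, a constant $A_k$ (depending only on $A$, $q$, and $k$) with the property that $x\leq A_k q^{i+1}$ forces $\psi_{i,m}(x)\leq Aq^{i+m+1}$ for every $m\geq k$: at most $k$ factors in the product can then exceed $1$, yielding $\psi'(x)\leq q^k$.

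To produce the sequence $\{A_k\}$ I would iterate the piecewise-linear envelope
\[
g_m(y)\;\leq\; \max\bigl(y,\; qy-(q-1)Aq^{i+m+1}\bigr),
\]
which is valid since $g_m$ fixes $[0,Aq^{i+m+1}]$ and has slope at most $q$ beyond.  A direct induction on $m$---always choosing the worst case $\psi_{i,m}(x)>Aq^{i+m+1}$---yields
\[
\psi_{i,k}(x)\;\leq\; q^k x \,-\, k(q-1)Aq^{i+k},
\]
so that the desired inequality $\psi_{i,k}(x)\leq Aq^{i+k+1}$ rearranges to $x\leq A\bigl(1+k(q-1)/q\bigr)q^{i+1}$.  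I would therefore set $A_k:=A\bigl(1+k(q-1)/q\bigr)$.  Once $\psi_{i,k}(x)\leq Aq^{i+k+1}$ is established, the fact that each $g_l$ is the identity on its flat region forces $\psi_{i,l}(x)=\psi_{i,k}(x)\leq Aq^{i+k+1}\leq Aq^{i+l+1}$ for all $l\geq k$; conversely, if some earlier iterate satisfies $\psi_{i,m}(x)\leq Aq^{i+m+1}$ with $m<k$, then $\psi_{i,k}(x)=\psi_{i,m}(x)\leq Aq^{i+k+1}$ holds automatically.  The sequence $A_k$ so defined is linear in $k$, hence strictly increasing and unbounded, and visibly depends only on $A$ and $q$---in particular, it is independent of both $i$ and $j$.  (The outermost case $\psi'\leq q^j$ for $x\geq A_{j-1}q^{i+1}$ is automatic from the total degree $[K(\pi_{i+j}):K(\pi_i)]=q^j$.)

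The main obstacle is to recognize that a crude envelope $g_m(y)\leq qy$ is \emph{too weak} to produce an increasing sequence $\{A_k\}$---such a bound would force $A_k\leq A$ for every $k$.  One must retain the correction $-(q-1)Aq^{i+m+1}$ coming from the flat region of $g_m$ and exploit the fact that once a partial iterate drops back into the flat region of a later $g_l$, all subsequent compositions act trivially.  A secondary technical point is that the one-step extensions $K(\pi_{i+m+1})/K(\pi_{i+m})$ need not be Galois, so $g_m$ may carry intermediate ramification breaks with slopes $1<p^s<q$; however, since the argument uses only the flat piece on $[0,Aq^{i+m+1}]$ and the global bound $g_m'\leq q$, these intermediate breaks are inconsequential for the estimate.
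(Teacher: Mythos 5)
Your proof is correct and takes essentially the same approach as the paper: both exploit the flat region of each one-step Herbrand function guaranteed by Lemma~\ref{psi is identity} together with the degree bound $\psi'_{K(\pi_{i+m+1})/K(\pi_{i+m})}\le q$, and both arrive at the same arithmetic-progression sequence $A_k = A\bigl(1 + k(q-1)/q\bigr) = (k+1)A - kA/q$. The only cosmetic difference is that you derive $A_k$ directly by solving a linear inequality from a worst-case piecewise-linear envelope, whereas the paper posits the formula for $A_{j-1}$ up front and verifies it by induction on $j$ via the one-step chain rule.
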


\begin{proof}
Note first that for any finite extension of fields $E/F$, we have that $\psi'_{E/F}(x) \leq [E:F]$.  This fact will be used without further comment below. 

Let $A$ be defined as above.  We claim that we may take $A_{j-1} = jA - \frac{(j-1)A}{q}$.  (Because $A_{j-1} \geq \frac{j}{2} A$, these numbers do approach infinity.) 
We prove this using induction on $j$.  The claim for $j = 1$ follows from Lemma~\ref{psi is identity}, together with the fact that $[K(\pi_{i+1}):K(\pi_i)] = q$.  Now assume the claim for some fixed $j-1$. Our strategy is to use the fact that 
\[
\psi'_{K(\pi_{i+j})/K(\pi_i)}(x) = \psi'_{K(\pi_{i+j})/K(\pi_{i+j-1})} \Bigg( \psi_{K(\pi_{i+j-1})/K(\pi_i)}(x) \Bigg) \cdot \psi'_{K(\pi_{i+j-1})/K(\pi_i)}(x),
\]
together with Lemma~\ref{psi is identity} and the inductive hypothesis.  Because 
\[
\psi'_{K(\pi_{i+j})/K(\pi_{i+j-1})}(y) \leq 1 
\]
for $y \leq Aq^{i+j}$ and
\[
\psi'_{K(\pi_{i+j})/K(\pi_{i+j-1})}(y) \leq q
\]
for all $y$, we are finished by induction if we can show that 
\[
\psi_{K(\pi_{i+j-1})/K(\pi_i)}(A_{j-1}q^{i+1}) \leq Aq^{i+j}.
\]
Using our slope bounds and $\psi(0) =  0$, we know that for all $x \geq A_{j-2}q^{i+1}$, we have 
\[
\psi_{K(\pi_{i+j-1})/K(\pi_i)}(x)  \leq A_0 q^{i+1} + q(A_1 - A_0)q^{i+1} + \cdots + q^{j-1} (x - A_{j-2}q^{i+1}).
\]
Evaluating at $x = A_{j-1}q^{i+1}$ and using our formula for $A_0, \ldots, A_{j-1}$ in terms of $A$, we complete the proof. 
\end{proof}

\begin{corollary} \label{psi corollary}
Let $\overline{B}$ denote some fixed constant.  There exists a constant $D$, depending on $\overline{B}$ but independent of $i$, such that
\[
Dq^i \geq \psi_{L/K(\pi_i)}(\overline{B}q^i).
\]
\end{corollary}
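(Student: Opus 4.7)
The plan is to exploit the transitivity of the Herbrand $\psi$-function,
\[
\psi_{L/K(\pi_i)} = \psi_{L/K(\pi_{i+j})} \circ \psi_{K(\pi_{i+j})/K(\pi_i)},
\]
together with Lemma~\ref{psi is identity} applied at level $i+j$, to reduce the desired bound on $\psi_{L/K(\pi_i)}$ to a bound on the finite-level $\psi_{K(\pi_{i+j})/K(\pi_i)}$, which is controlled by the slope estimates of Lemma~\ref{integer sequence for psi}. The key point is that the sequence $A_0 < A_1 < A_2 < \cdots$ of Lemma~\ref{integer sequence for psi} is \emph{independent} of $i$ and unbounded, so a threshold depending on $\overline{B}$ can be chosen uniformly in $i$.

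Concretely, I would first choose an integer $k = k(\overline{B}) \geq 0$, depending only on $\overline{B}$ (and the fixed sequence $\{A_m\}$), such that $A_k \geq \overline{B}/q$; then $\overline{B} q^i \leq A_k q^{i+1}$ for every $i$. Integrating the slope bounds of Lemma~\ref{integer sequence for psi} from $0$ up to $\overline{B} q^i$ then yields, for every $j \geq 1$,
\[
\psi_{K(\pi_{i+j})/K(\pi_i)}(\overline{B} q^i) \;\leq\; q^{i+1}\Bigl(A_0 + q(A_1 - A_0) + \cdots + q^{k-1}(A_{k-1} - A_{k-2})\Bigr) + q^k\bigl(\overline{B} q^i - A_{k-1} q^{i+1}\bigr) \;\leq\; C q^{i+1},
\]
for a constant $C = C(\overline{B})$ that is independent of $i$ and $j$.

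Next, I would fix an integer $j = j(\overline{B}) \geq 1$, again independent of $i$, large enough that $q^j \geq C/A$, so that $C q^{i+1} \leq A q^{i+j+1}$ for all $i$. Setting $y := \psi_{K(\pi_{i+j})/K(\pi_i)}(\overline{B} q^i)$, the previous paragraph gives $y \leq C q^{i+1} \leq A q^{i+j+1}$. By Lemma~\ref{psi is identity} (applied with $i$ replaced by $i+j$), $\psi_{L/K(\pi_{i+j})}$ is the identity on $[0, A q^{i+j+1}]$, and so by transitivity
\[
\psi_{L/K(\pi_i)}(\overline{B} q^i) \;=\; \psi_{L/K(\pi_{i+j})}(y) \;=\; y \;\leq\; C q^{i+1} \;=\; (Cq)\, q^i.
\]
Thus $D := Cq$ does the job.

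There is no real obstacle here beyond bookkeeping; the only nontrivial ingredients are Lemma~\ref{integer sequence for psi} (slope control) and Lemma~\ref{psi is identity} (the identity region for $\psi_{L/K(\pi_{i+j})}$), both already in hand. The mildly delicate point is ensuring that both $k$ and $j$ can be chosen independently of $i$, which follows from the fact that the sequence $\{A_m\}$ is fixed and increasing without bound.
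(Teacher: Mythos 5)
Your argument is correct, and it is the same in spirit as the paper's: both reduce matters to the slope bounds of Lemma~\ref{integer sequence for psi}, exploiting the fact that the sequence $\{A_m\}$ is independent of $i$ and unbounded. The difference is one of detail: the paper's proof says only ``choose $j$ with $\overline{B}q^i \leq A_j q^{i+1}$; then as a coarse bound $\psi_{L/K(\pi_i)}(\overline{B}q^i) \leq q^j \overline{B}q^i$,'' silently passing from the slope bound on the \emph{finite}-level functions $\psi_{K(\pi_{i+j'})/K(\pi_i)}$ to a bound on $\psi_{L/K(\pi_i)}$ itself. You make that passage explicit by invoking the transitivity of $\psi$ together with Lemma~\ref{psi is identity} at a suitably high level $i+j$, which is exactly the justification the paper leaves implicit. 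Your cruder bound $\psi_{K(\pi_{i+j})/K(\pi_i)}(\overline{B}q^i) \leq q^k\,\overline{B}q^i$ (rather than the piecewise-integrated $Cq^{i+1}$) would have sufficed and matches the paper's ``coarse bound'' more literally, but the extra precision does no harm. In short: same approach, with a gap in the paper's exposition filled in.
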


\begin{proof}
In the notation of Lemma~\ref{integer sequence for psi}, choose a $j$ such that $\overline{B}q^i \leq A_j q^{i+1}$; note that this does not depend on $i$.  Then as a coarse bound, we have $\psi_{L/K(\pi_i)}(\overline{B}q^i) \leq q^j \overline{B}q^i$.  Hence for $D$ we may take $q^j \overline{B}$.  Note that this is independent of $i$.
\end{proof}

\begin{lemma} \label{galois tower ramification groups}
Let $K \subseteq K_1 \subseteq K_2$ be two Galois extensions of $K$, with Galois groups $G_1, G_2$.  Then
\[
\Fix(G_2^u) \cap K_1 = \Fix(G_1^u). 
\]    
\end{lemma}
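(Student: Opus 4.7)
The plan is to reduce the statement to the standard compatibility of the upper numbering filtration with quotients (Herbrand's theorem). First I would set $N := \Gal(K_2/K_1)$. Since $K_1/K$ is Galois, $N$ is a normal subgroup of $G_2$, and the restriction map $G_2 \twoheadrightarrow G_1$ identifies $G_1$ with $G_2/N$.

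The second step is to observe that an element $x \in K_1$ is fixed by $g \in G_2$ if and only if it is fixed by the image $\overline{g} \in G_1 = G_2/N$, because the action of $G_2$ on $K_1$ factors through $G_1$. Consequently, if $H \subseteq G_2$ is any subgroup and $\overline{H}$ denotes its image in $G_1$, then $\Fix(H) \cap K_1 = \Fix(\overline{H})$ inside $K_1$. Applying this to $H = G_2^u$ reduces the claim to showing that the image of $G_2^u$ in $G_1 = G_2/N$ equals $G_1^u$.

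The third and decisive step is Herbrand's theorem (see e.g.\ Serre, \emph{Corps Locaux}, Ch.~IV, \S3): for the quotient $G_2 \twoheadrightarrow G_2/N = G_1$ corresponding to a Galois subextension, the upper numbering filtration is compatible with the quotient, i.e.\ $(G_2/N)^u = G_2^u N/N$. Combining this with the second step gives
\[
\Fix(G_2^u) \cap K_1 = \Fix(G_2^u N / N) = \Fix(G_1^u),
\]
as required.

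The main (and essentially only) nontrivial input is Herbrand's theorem on the compatibility of the upper numbering with quotients; everything else is formal Galois theory. I do not anticipate any genuine obstacle, since the $K_i$ appearing in applications are finite Galois extensions of $K$ (or, at worst, arbitrary Galois extensions, for which Herbrand's theorem still holds thanks to the inverse-limit definition of the upper numbering).
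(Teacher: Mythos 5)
Your proof is correct and follows essentially the same route as the paper's: identify $G_1$ with the quotient $G_2/N$, invoke the compatibility of the upper numbering with quotients (Herbrand's theorem, $(G_2/N)^u = G_2^u N/N$), and read off the equality of fixed fields. You have merely spelled out the formal Galois-theoretic step ($\Fix(H)\cap K_1 = \Fix(\overline{H})$) that the paper leaves implicit.
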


\begin{proof}
Let $H_1 \trianglelefteq G_2$ denote the subgroup corresponding to the field $K_1$; we thus have $G_2/H_1 \cong G_1$.  By properties of ramification groups in the upper numbering, we have
\[
(G_2^u H_1/H_1) = (G_2/H_1)^u \cong G_1^u.
\] 
The claim follows immediately.
\end{proof}

\begin{proposition} \label{pi in fixed fields}
There exists a constant $D$, independent of $i$, such that $K(\pi_i)$ is contained in the fixed field corresponding to $G_K^{\phi_{L/K}(Dq^i)} G_L$.
\end{proposition}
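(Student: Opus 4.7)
The desired inclusion $K(\pi_i)\subseteq\Fix(G_K^{\phi_{L/K}(Dq^i)}G_L)$ is equivalent to $G_K^{\phi_{L/K}(Dq^i)}\subseteq G_{K(\pi_i)}$, since $G_L\subseteq G_{K(\pi_i)}$ holds automatically. The plan is to pass to the Galois closure $M$ of $K(\pi_i)/K$ in $\Kbar$ and invoke Herbrand's theorem on the compatibility of upper numbering with quotients (as recorded in Lemma~\ref{galois tower ramification groups}): the image of $G_K^u$ in $\Gal(M/K)$ is $\Gal(M/K)^u$, so if this image is trivial, then $G_K^u\subseteq G_M\subseteq G_{K(\pi_i)}$. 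By Lemma~\ref{galois closure lemma}, $\Gal(M/K)_v=\{1\}$ for $v\ge\overline{B}q^i$, hence $\Gal(M/K)^u=\{1\}$ as soon as $u\ge\phi_{M/K}(\overline{B}q^i)$.

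With these reductions in hand, the task is to exhibit a constant $D$ (independent of $i$) for which
\[
\phi_{L/K}(Dq^i)\;\ge\;\phi_{M/K}(\overline{B}q^i),
\]
equivalently $\psi_{L/K}\bigl(\phi_{M/K}(\overline{B}q^i)\bigr)\le Dq^i$. The strategy is to exploit the common subfield $K(\pi_i)\subseteq L\cap M$. On the $M$-side, combine Herbrand transitivity for the Galois tower $K(\pi_i)\subseteq M/K$ with the fact (isolated in the proof of Lemma~\ref{galois closure lemma}) that $[M:K(\pi_i)]\le(q-1)!$ is bounded uniformly in $i$; this controls $\phi_{M/K}(\overline{B}q^i)$ in terms of $\phi_{K(\pi_i)/K}$ evaluated at a value of size $\overline{B}q^i$. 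On the $L$-side, Corollary~\ref{psi corollary} provides the crucial estimate $\psi_{L/K(\pi_i)}(\overline{B}q^i)\le D_0q^i$, which via the transitivity $\psi_{L/K}=\psi_{L/K(\pi_i)}\circ\psi_{K(\pi_i)/K}$ (taken through the Galois closure) yields the desired upper bound on $\psi_{L/K}$ at the relevant upper break.

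The principal obstacle is that $K(\pi_i)/K$ need not be Galois (as occurs in the Kummer case $L=K(\pi_0^{1/p^\infty})$), so Herbrand transitivity cannot be applied directly to the tower $K\subseteq K(\pi_i)\subseteq L$. The plan is to circumvent this by working inside a sufficiently large finite Galois extension $N/K$ in $\Kbar$ containing both $M$ and an initial segment $K(\pi_{i+j})$ of $L$. Within $N$, both $\Gal(N/M)$ and $\Gal(N/K(\pi_{i+j}))$ sit inside $\Gal(N/K(\pi_i))$, and Lemma~\ref{galois tower ramification groups} allows comparison of the upper numbering filtrations on $\Gal(N/K)$ restricted to these subgroups; passing to the limit $j\to\infty$ and feeding in the bound from Corollary~\ref{psi corollary} produces the required $D$.
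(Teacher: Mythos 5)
Your proof is correct and rests on the same two pillars as the paper's: Lemma~\ref{galois closure lemma} (the lower-numbering bound $\overline{B}q^i$ on the Galois closure) and Corollary~\ref{psi corollary} (the bound $\psi_{L/K(\pi_i)}(\overline{B}q^i)\le Dq^i$). The underlying Herbrand-function computation is also the same; you reduce $\psi_{L/K}\bigl(\phi_{M/K}(\overline{B}q^i)\bigr)$ to $\psi_{L/K(\pi_i)}\bigl(\phi_{M/K(\pi_i)}(\overline{B}q^i)\bigr)\le\psi_{L/K(\pi_i)}(\overline{B}q^i)$, which is exactly the same chain as the paper's (the paper phrases it with $\psi_{\overline{K(\pi_i)}/K(\pi_i)}(x)\ge x$, you with the equivalent $\phi_{M/K(\pi_i)}(x)\le x$).

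Two organizational remarks. First, your reduction is somewhat cleaner than the paper's: you aim directly for $\Gal(M/K)^u=\{1\}$, which via quotient compatibility of upper numbering gives $G_K^u\subseteq G_M\subseteq G_{K(\pi_i)}$ in one stroke, whereas the paper passes through the cofinal tower $\overline{K(\pi_{i+j})}$ and uses Lemma~\ref{galois tower ramification groups} to reduce to $j=0$; these are the same underlying fact, just packaged differently. Second, your final paragraph worries unnecessarily about $K(\pi_i)/K$ not being Galois. The Herbrand transitivity $\psi_{L/K}=\psi_{L/K(\pi_i)}\circ\psi_{K(\pi_i)/K}$ holds for arbitrary towers of separable (and strictly APF) extensions — Wintenberger's $\psi_{E/F}$ is defined for non-Galois $E/F$ precisely so that transitivity is automatic — and the paper invokes it without comment. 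Your proposed workaround via a large Galois $N$ containing $M$ and $K(\pi_{i+j})$ and a limit $j\to\infty$ would work, but it is not needed; likewise, the uniform bound $[M:K(\pi_i)]\le(q-1)!$ you cite is already absorbed into the constant $\overline{B}$ of Lemma~\ref{galois closure lemma} and plays no further role in the Herbrand estimate, where the elementary inequality $\phi_{M/K(\pi_i)}(x)\le x$ is all that is required.
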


\begin{proof}
Write $\overline{K(\pi_{i+j})}$ for the Galois closure of $K(\pi_{i+j})$ over $K$.  
It suffices to show that there exists a constant $D$, independent of $i$ and $j$, such that
\[
K(\pi_i) \subseteq \Fix\left(\Gal(\overline{K(\pi_{i+j})}/K)_{\psi_{\overline{K(\pi_{i+j})}/K} \circ \phi_{L/K}(Dq^i)}\right).
\]
By Lemma~\ref{galois tower ramification groups}, it suffices to consider the case $j = 0$.  We want to show that if $\sigma(\pi_i) \neq \pi_i$, then 
\[
\sigma \not\in \Gal(\overline{K(\pi_{i})}/K)_{\psi_{\overline{K(\pi_{i})}/K} \circ \phi_{L/K}(Dq^i)} = \Gal(\overline{K(\pi_{i})}/K)_{\psi_{\overline{K(\pi_{i})}/K(\pi_i)} \circ \phi_{L/K(\pi_i)}(Dq^i)}
\]
Because $\psi(x) \geq x$, it suffices to prove that there exists a $D$, independent of $i$, such that 
\[
\sigma \not\in \Gal(\overline{K(\pi_{i})}/K)_{ \phi_{L/K(\pi_i)}(Dq^i)}.
\]
By Lemma~\ref{galois closure lemma}, we know 
\[
\sigma \not\in \Gal(\overline{K(\pi_{i})}/K)_{ \overline{B}q^i}.
\]
Because $\psi_{L/K(\pi_i)}$ is an increasing function, it suffices to show that there exists a constant $D$, independent of $i$, such that
\[
Dq^i \geq \psi_{L/K(\pi_i)}(\overline{B}q^i).
\]
We are finished by Corollary~\ref{psi corollary}.
\end{proof}

\begin{lemma} \label{galois index bound}
There exists a constant $N$, independent of $i$, such that 
\[
[G_K : G_K^{\phi_{L/K}(Dq^i)}G_L] \leq Nq^i
\]
for all $i$.
\end{lemma}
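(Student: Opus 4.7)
The plan is to reduce this lemma directly to the strictness hypothesis (\ref{hypothesis elementary}), which was recorded at the start of this section. That inequality provides a uniform lower bound on the ratio $\psi_{L/K}(u)/[G_K : G_K^u G_L]$, so any upper bound on $[G_K : G_K^u G_L]$ is obtained by dividing $\psi_{L/K}(u)$ by the positive constant $c$.

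Concretely, I would set $u := \phi_{L/K}(Dq^i)$, where $D$ is the constant obtained in Proposition~\ref{pi in fixed fields}. Since $\psi_{L/K}$ and $\phi_{L/K}$ are mutually inverse, this choice of $u$ satisfies $\psi_{L/K}(u) = Dq^i$. Substituting into (\ref{hypothesis elementary}) immediately yields
\[
[G_K : G_K^{\phi_{L/K}(Dq^i)} G_L] \leq \frac{\psi_{L/K}(\phi_{L/K}(Dq^i))}{c} = \frac{D}{c}\, q^i,
\]
so we may take $N := D/c$.

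There is essentially no obstacle here: the lemma is a formal consequence of strict APF-ness, packaged with the previously established $D$. The only small caveat is to note that $\phi_{L/K}$ is defined on $[0,\infty)$ and the strictness inequality applies to all positive $u$, so the substitution is legitimate for every $i \geq 0$. This lemma will then combine with Proposition~\ref{pi in fixed fields} to control the index $[K_m : K]$ for elementary subfields $K_m$ containing $K(\pi_i)$, which is what is needed to compare the tower $\{K(\pi_i)\}$ with the tower of elementary extensions $\{K_m\}$ and ultimately lift (a $q$-power of) a uniformizer of $X_K(L)$ to $\ALK$.
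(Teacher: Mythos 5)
Your proof is correct and follows exactly the same route as the paper: apply the strictness inequality (\ref{hypothesis elementary}) at $u = \phi_{L/K}(Dq^i)$, use that $\psi_{L/K}$ inverts $\phi_{L/K}$, and conclude with $N = D/c$. Nothing further is needed.
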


\begin{proof}
By (\ref{hypothesis elementary}), using the fact that $\psi_{L/K}$ is the inverse to $\phi_{L/K}$, we have 
\[
[G_K : G_K^{\phi_{L/K}(Dq^i)}G_L] \leq \frac{Dq^i}{c}.
\]
Thus we may take $N = D/c$.
\end{proof}

Let $D$ be as in Proposition~\ref{pi in fixed fields}.  Let $v_i := \phi_{L/K}(Dq^i)$ and let $F_i$ denote the fixed field corresponding to $G_K^{v_i} G_L$. 

\begin{lemma} \label{no infinite sequence of indices}
There does not exist an infinite sequence $0 = i_0 < i_1 < i_2 < \cdots$ such that 
\[
[F_{i_{j+1}}: F_{i_j}] > q^{i_{j+1} - i_j}
\] 
for all $j$.  
\end{lemma}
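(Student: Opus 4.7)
The plan is to argue by contradiction: assume such a sequence exists, and deduce from the hypothesis a strictly increasing, bounded sequence of positive integers. The two ingredients already in hand are Proposition~\ref{pi in fixed fields} (which gives $K(\pi_i) \subseteq F_i$) and Lemma~\ref{galois index bound} (which gives $[F_i : K] \leq N q^i$), so the entire argument will take place inside the towers $K \subseteq K(\pi_i) \subseteq F_i$.

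First I would verify that $[K(\pi_i) : K] = q^i$ exactly. Since $\varphi$ is only a power series, this requires invoking Weierstrass preparation: write $\varphi(x) - \pi_{i-1} = u(x) h(x)$ with $u(x)$ a unit in $\O_{K(\pi_{i-1})}[\![x]\!]$ and $h(x)$ monic of degree $q$. The congruence $\varphi(x) \equiv x^q \pmod{\pi \O_F}$ forces the intermediate coefficients of $h$ into the maximal ideal, and inductively $\pi_{i-1}$ is a uniformizer of $K(\pi_{i-1})$, so $h$ is Eisenstein. Hence $[K(\pi_i) : K(\pi_{i-1})] = q$, so that by induction $[K(\pi_i) : K] = q^i$, and the positive integer $m_i := [F_i : K(\pi_i)] = [F_i:K]/q^i$ is uniformly bounded by $N$.

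The key calculation is then the double use of multiplicativity of degrees in the towers
\[
K(\pi_{i_j}) \subseteq K(\pi_{i_{j+1}}) \subseteq F_{i_{j+1}} \qquad \text{and} \qquad K(\pi_{i_j}) \subseteq F_{i_j} \subseteq F_{i_{j+1}},
\]
which together yield
\[
[F_{i_{j+1}} : F_{i_j}] \;=\; q^{i_{j+1} - i_j} \cdot \frac{m_{i_{j+1}}}{m_{i_j}}.
\]
Consequently the hypothesis $[F_{i_{j+1}} : F_{i_j}] > q^{i_{j+1} - i_j}$ is precisely the assertion that the integer sequence $(m_{i_j})_{j \geq 0}$ is strictly increasing. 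Since each $m_{i_j} \leq N$, no such infinite sequence can exist, giving the desired contradiction.

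The main step to get right is recognizing that the powers of $q$ in the hypothesis are exactly accounted for by the contribution of $K(\pi_i)/K$, so that the residual (integer-valued) quantity $m_i = [F_i : K(\pi_i)]$ must carry all of the strict growth. With this translation in place the rest is bookkeeping; without it, one is tempted to push the numerical bound $[F_i:K] \leq Nq^i$ directly through the telescoping product $\prod_j [F_{i_{j+1}}:F_{i_j}]$, where the gain of a single unit at each step is too weak to beat $N$ once the increments $i_{j+1} - i_j$ are allowed to grow.
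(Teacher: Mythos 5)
Your proof is correct and takes essentially the same route as the paper: both rest on $K(\pi_i)\subseteq F_i$, the degree $[K(\pi_i):K]=q^i$, and the uniform bound $[F_i:K]\leq Nq^i$ from Lemma~\ref{galois index bound}, and both extract a strictly increasing sequence of bounded positive integers. Your bookkeeping via $m_i=[F_i:K(\pi_i)]=[F_i:K]/q^i$ is in fact slightly more careful than the paper's telescoping argument, which asserts $q^{i_j}\mid[F_{i_j}:F_0]$ (this would require $[F_0:K]$ prime to $q$); working with $[F_{i_j}:K]=m_{i_j}q^{i_j}$ directly, as you do, sidesteps that issue while yielding the identical contradiction.
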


\begin{proof}
 By Proposition~\ref{pi in fixed fields}, we know that $K(\pi_i) \subseteq F_i$.  We also know that $[K(\pi_i):K] = q^i$ for all $i$.  By these observations and Lemma~\ref{galois index bound}, we have that
\[
q^i \mid [F_i:K] \qquad \text{and} \qquad [F_i:K] \leq Nq^i
\]
for all $i$.  

Now assume towards contradiction that there does exist a sequence as in the statement of the lemma.  We have $[F_{i_1} : F_0] > q^{i_1}$, and also $q^{i_1} \mid [F_{i_1} : F_0]$, so $[F_{i_1} : F_0] \geq 2q^{i_1}$.  We also have $[F_{i_2} : F_{i_1}] > q^{i_2 - i_1}$, so $[F_{i_2} : F_0] > 2q^{i_2}$, and also $q^{i_2} \mid [F_{i_2} : F_0]$, so $[F_{i_2} : F_0] \geq 3q^{i_2}$.  Continuing in this way, we eventually pass $Nq^i$, and thus reach a contradiction.  
\end{proof}

\begin{corollary} \label{pi and elementary subfields}
There exists an $i_0 >\!> 0$ such that $[F_{i_0+j} : F_{i_0}] \leq q^{j}$ for all $j$.  In particular, there exists a field $K_{i}$ in the tower of elementary extensions such that 
\[
q^m \mid [K_{i+j} : K_i]
\] 
implies $\pi_{i_0 +m} \in K_{i+j}$.  
\end{corollary}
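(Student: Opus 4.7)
The first assertion follows by contraposition from Lemma~\ref{no infinite sequence of indices}. Suppose no such $i_0$ exists; then for every $i \geq 0$ there is some $j > 0$ with $[F_{i+j}:F_i] > q^j$. Beginning with $i_0 := 0$, this allows us to inductively construct an infinite strictly increasing sequence $0 = i_0 < i_1 < i_2 < \cdots$ with $[F_{i_{k+1}}:F_{i_k}] > q^{i_{k+1}-i_k}$ for all $k$, directly contradicting Lemma~\ref{no infinite sequence of indices}.

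The essential observation for the second assertion is that each $F_j$ is itself one of the elementary subfields: since $G_K^u G_L$ is constant on each interval between consecutive ramification breaks of $L/K$, we have $F_j = \mathrm{Fix}(G_K^{v_j}G_L) = K_{n(j)}$ for some index $n(j) \geq 0$. Let $K_i$ denote the elementary subfield equal to $F_{i_0}$, and for each $m \geq 0$ write $F_{i_0+m} = K_{i+t_m}$ with $t_m \geq 0$; by the first part, $[K_{i+t_m}:K_i] = [F_{i_0+m}:F_{i_0}] \leq q^m$.

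Now suppose $q^m \mid [K_{i+j}:K_i]$. Because the elementary subfields form a totally ordered chain inside $L$, the fields $K_{i+j}$ and $K_{i+t_m}$ are comparable. If $K_{i+j} \subsetneq K_{i+t_m}$, then $[K_{i+j}:K_i]$ is a proper divisor of $[K_{i+t_m}:K_i]$, hence is strictly less than $q^m$, contradicting $q^m \mid [K_{i+j}:K_i]$. Therefore $K_{i+t_m} \subseteq K_{i+j}$, and since $\pi_{i_0+m} \in F_{i_0+m} = K_{i+t_m}$ by Proposition~\ref{pi in fixed fields}, we conclude $\pi_{i_0+m} \in K_{i+j}$, as desired.

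There is no genuine obstacle here once the first step is in hand; the only point requiring a modicum of care is verifying that each $F_j$ really does appear in the elementary tower $\{K_n\}$, which is immediate from the description of that tower as the family of fixed fields of the groups $G_K^{b_n}G_L$ for $b_n$ a ramification break of $L/K$, together with the fact that $G_K^{\bullet} G_L$ is locally constant in the upper-numbering parameter between consecutive breaks.
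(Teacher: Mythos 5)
Your proof is correct and follows essentially the same route as the paper's: both arguments hinge on (i) the deduction of the first statement from Lemma~\ref{no infinite sequence of indices}, (ii) the observation that each $F_j$, being the fixed field of $G_K^{v_j}G_L$, occurs in the elementary tower, and (iii) the chain-comparison argument showing $F_{i_0+m}\subseteq K_{i+j}$ whenever $q^m \mid [K_{i+j}:K_i]$, followed by an appeal to Proposition~\ref{pi in fixed fields}. The only difference is that you spell out the contrapositive and the totally-ordered-chain step a bit more explicitly than the paper does.
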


\begin{proof}
The first assertion follows immediately from Lemma~\ref{no infinite sequence of indices}.  We now prove the second assertion.  Notice that each of the fields $F_j$, being the fixed field of $G_K^{v_j}G_L$, occurs in the tower of elementary extensions, since the tower of elementary extensions contains the fixed field of $G_K^v G_L$ for every $v$.  Take $K_i = F_{i_0}$.  We know that $\pi_{i_0 + j} \in F_{i_0 + j}$ by Proposition~\ref{pi in fixed fields}, and we know that $F_{i_0 + j}$ occurs somewhere in the tower of elementary extensions.  Because $[F_{i_0 + m} : K_i] = [F_{i_0 + m} : F_{i_0}] \leq q^m$, we deduce that if $q^m \mid [K_{i+j} : K_i]$, then $\pi_{i_0 + m} \in F_{i_0 + m} \subseteq K_{i+j}$, as required.  
\end{proof}

The preceding results provide a connection between the elementary fields $K_n$ and the fields $K(\pi_i)$.  The fields $K_n$ are important because of their appearance in the definition of $\ALK$ (and because they are canonically associated to the extension $L/K$).  The fields $K(\pi_i)$ are important because in each such field we know an explicit uniformizer $\pi_i$, and from these uniformizers we can produce a uniformizer for $\e_{L/K}^+$.  

\begin{lemma} \label{a uniformizer for E}
Let $\mathfrak{b}$ be as in $(\ref{bideal})$ and let $\mathfrak{a} = (\pi, \mathfrak{b})$.  Let $q = p^a$.  A uniformizer for $\e_{L/K}^+$ is given by the $p$-power compatible system in
\[
\varprojlim_{x \mapsto x^p} \O_{L}/\mathfrak{a}
\]
which has $ai$ component equal to $\pi_i \bmod \mathfrak{a}$.
\end{lemma}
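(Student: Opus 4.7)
The plan is to construct the $p$-power compatible system explicitly, verify its membership in $\e_{L/K}^+$ using the alignment of the towers $\{K(\pi_i)\}$ and $\{K_m\}$ established earlier in the section, and finally identify the resulting element as a uniformizer via the norm-field correspondence of Corollary~\ref{comparison with Wintenberger's norm ring}.

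First I interpolate between the prescribed components by setting $y_{ai} \equiv \pi_i \pmod{\mathfrak{a}}$ and, for $0 < j < a$, $y_{ai+j} \equiv \pi_{i+1}^{p^{a-j}} \pmod{\mathfrak{a}}$. Each successive $p$-th power relation $y_n^p = y_{n-1}$ is immediate from this formula; the only nontrivial check occurs at the index $a(i+1)$ where both formulas apply, and it reduces to $\pi_{i+1}^q \equiv \pi_i \pmod{\mathfrak{a}}$. This holds because $\varphi(\pi_{i+1}) = \pi_i$ together with $\varphi(x) \equiv x^q \bmod \pi$ and $\pi \in \mathfrak{a}$.

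Next I verify that $(y_n) \in \e_{L/K}^+(\mathfrak{a})$, which by Definition~\ref{normring} requires $y_{r_m} \in \O_{K_m}/\mathfrak{a}_{K_m}$ for every $m$. Here I invoke Corollary~\ref{pi and elementary subfields}, which supplies a fixed shift $i_0$ and an elementary field $K_{i_*}$ such that $\pi_{i_0+s} \in K_n$ whenever $q^s \mid [K_n : K_{i_*}]$. Writing $[K_m : K_{i_*}] = p^{r_m - r_{i_*}}$ and using $q = p^a$, the equation $r_m = v_p([K_m:K_1])$ translates into the divisibility statement $q^s \mid [K_m:K_{i_*}]$ for the appropriate $s$, so $\pi_{i_0+s} \in K_m$. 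By construction $y_{r_m}$ is then either $\pi_{i_0+s}$ itself or a $p$-power thereof, hence lies in $\O_{K_m}/\mathfrak{a}_{K_m}$.

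Finally, to see that $(y_n)$ is a uniformizer of $\e_{L/K}^+$, I pass through the isomorphism $\e_{L/K}^+ \simeq A_K(L)$ and its description in Lemma~\ref{prelimisom} as the inverse limit over $\{K_m\}$ under norm-power maps. Each $\pi_i$ is a uniformizer of $\O_{K(\pi_i)}$ since $\varphi(x) - \pi_{i-1}$ is Eisenstein over $K(\pi_{i-1})$ (its Newton polygon has single slope $-1/q$), and $N_{K(\pi_{i+1})/K(\pi_i)}(\pi_{i+1}) = (-1)^{q+1}\pi_i \equiv \pi_i \pmod{\mathfrak{a}}$. After the tower alignment of Corollary~\ref{pi and elementary subfields}, the family $(\pi_i)$ descends to a compatible system of uniformizers along the elementary tower modulo $\mathfrak{a}$, so the resulting element of $A_K(L)$ --- and hence of $\e_{L/K}^+$ --- is a uniformizer. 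The main obstacle is the second step, where matching the subscripts $r_m$ appearing in the definition of $\e_{L/K}^+$ with the $\varphi$-iterate tower $\{K(\pi_i)\}$ requires careful bookkeeping of the tame degree $[K_1:K]$ and the shift $i_0$ supplied by Corollary~\ref{pi and elementary subfields}.
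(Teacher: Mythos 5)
The proposal diverges from the paper's argument in two places, and both divergences introduce gaps.

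\textbf{The norm computation.} You assert that $N_{K(\pi_{i+1})/K(\pi_i)}(\pi_{i+1}) = (-1)^{q+1}\pi_i$ on the grounds that ``$\varphi(x)-\pi_{i-1}$ is Eisenstein.'' But $\varphi$ is a \emph{power series}, not a polynomial, so $\varphi(x)-\pi_i$ is not a candidate minimal polynomial, and after passing to its Weierstrass distinguished factor the constant term is \emph{not} exactly $-\pi_i$ (the Weierstrass unit contributes a factor $w(\pi_{i+1})$ which is $\equiv 1 \bmod \pi$ but not equal to $1$). What is actually true, and what the argument needs, is the congruence $N_{K(\pi_{i+1})/K(\pi_i)}(\pi_{i+1}) \equiv \pi_i \bmod \mathfrak{a}$. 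Establishing this is precisely the substance of the paper's proof: one Weierstrass-prepares $\varphi = w\cdot h$ with $h$ monic of degree $q$, compares the genuine minimal polynomial $f_i$ of $\pi_i$ with $h(x) - \pi_{i-1}$, and observes that the $q$ terms of $(f_i - h + \pi_{i-1})(\pi_i)$ have pairwise distinct $\pi_i$-valuations, so divisibility of the sum by $\pi$ forces divisibility of each term, in particular the constant term. Asserting the exact identity and appending ``$\equiv \pi_i \bmod \mathfrak{a}$'' as an afterthought skips all of this.

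\textbf{The membership check.} Your second step, verifying $(y_n)\in \e_{L/K}^+(\mathfrak{a})$ by invoking Corollary~\ref{pi and elementary subfields}, does not go through. That corollary produces $\pi_{i_0+s}\in K_m$ for a fixed but \emph{unknown} shift $i_0\ge 0$, while the definition of $\e_{L/K}^+$ demands that $y_{r_m}$, namely $\pi_{r_m/a}$, lie in $\O_{K_m}/\mathfrak{a}_{K_m}$. The index $i_0+s$ and the index $r_m/a$ do not match without an extra relation between $i_0$ and $r_{i_*}$ that is not available; this shift is exactly the reason Proposition~\ref{beta uniformizer} can only conclude that the $q^{i_1}$-th power $(\pi_{n-i_1})_n$ lies in $\beta(\ALK)$, not the uniformizer itself. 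The paper avoids this issue entirely: it invokes Corollary~\ref{comparison with Wintenberger's norm ring} to identify $\e_{L/K}^+(\mathfrak{a})$ with the valuation ring $A_K(L)$, then uses cofinality of the tower $\{K(\pi_i)\}$ among finite intermediate extensions to express $A_K(L)$ as norm-compatible families over $\{K(\pi_i)\}$. Once the norm congruence above is in hand, $(\pi_i \bmod \mathfrak{a})_i$ is such a family, it lifts to a norm-compatible family of genuine uniformizers as in the proof of Lemma~\ref{sindep}, and the result follows with no $K_m$-tower bookkeeping at all. Your step 2 is both unnecessary (the cofinality argument replaces it) and, as written, incorrect.

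Your first step (the interpolation $y_{ai+j} \equiv \pi_{i+1}^{p^{a-j}}$, verified via $\pi_{i+1}^q\equiv\pi_i\bmod\mathfrak{a}$) is fine and is a reasonable explicit spelling-out of something the paper leaves implicit, but it does not repair the two gaps above.
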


\begin{proof}
Using that the system of fields $K(\pi_i)$ is cofinal among the system of all intermediate fields between $L$ and $K$, we may express the valuation ring inside of $X_K(L)$ as
$\varprojlim_{\text{Nm}} \O_{K(\pi_i)}$.  Using Weierstrass preparation, write $\varphi(x) = w(x) h(x)$, where $w(x) \in \O_F[\![x]\!]$ is a unit and where $h(x)$ is a monic degree $q$ polynomial with no constant term.  Because $\varphi(x) \equiv x^q \bmod \pi$, we must have $w(x) \equiv 1 \bmod \pi$.  Because $\varphi(\pi_i) = \pi_{i-1}$, we must have that $h(\pi_i) \equiv \pi_{i-1} \bmod \pi$.  

Let $f_i(x) \equiv x^q \bmod \pi_{i-1} \in \O_{K(\pi_{i-1})}[x]$ denote the minimal polynomial for $\pi_i$ over $K(\pi_{i-1})$.  We compare this polynomial to $h(x) - \pi_{i-1}$.  Write
\[
f_i(x) - (h(x) - \pi_{i-1}) = a_1 x^{q-1} + \cdots + a_q \in \O_{K(\pi_{i-1})}[x].
\]
Evaluating this polynomial at $\pi_i$ yields a sum of $q$ elements of $\O_{K(\pi_i)}$ which have distinct $\pi_{i}$-valuations.  On the other hand, evaluating this polynomial at $\pi_i$ yields an element divisible by $\pi$.  This shows that each of the $q$ terms, and in particular the constant term, must be divisible by $\pi$.  We deduce that the constant term of $f_i(x)$ differs from $-\pi_{i-1}$ by a multiple of $\pi$.  In particular, $\Nm_{K(\pi_i)/K(\pi_{i-1})}(\pi_i) \equiv \pi_{i-1} \bmod \pi$.  (This holds for both $p = 2$ and for $p$ odd.)  Thus, for the ideal $\mathfrak{a}$ as in the statement of the lemma, we have that the elements $(\pi_i \bmod \mathfrak{a})$ are norm-compatible.  As in the proof of Lemma~\ref{sindep}, this sequence lifts to a norm-compatible family of uniformizers, which in turn is a uniformizer for $\e_{L/K}^+$.  
\end{proof}

Recall that the map $\beta$ was defined in Definition~\ref{def: beta} (see also Proposition~\ref{image of beta}).  The following result shows that the image of $\beta$ contains the uniformizer of Lemma~\ref{a uniformizer for E} raised to some power of $q$.

\begin{proposition} \label{beta uniformizer}
For $i > 0$, set $\pi_{-i} := \varphi^i(\pi_0)$.  
There exists some $i_1 \geq 0$ and some $x \in \ALK$ such that $\beta(x) = (\pi_{n-i_1} \bmod \mathfrak{b})_n$.
\end{proposition}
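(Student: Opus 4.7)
The plan is to use the section $\lambda_\varphi$ from Proposition~\ref{lambda prop}, applied to $R := \O_F[\![u]\!]$ with the endomorphism $u \mapsto \varphi(u)$, as the universal source of our Witt vectors. By Corollary~\ref{ghost comps lambda} the element $\lambda_\varphi(u) \in W_\pi(R)$ has ghost components $(u, \varphi(u), \varphi^2(u), \ldots)$, and crucially its Witt components lie in $R$ itself (i.e.\ they are power series in $u$ with $\O_F$-coefficients). For each $i \in \Z$ I will produce $\mu_i \in W_\pi(\O_{\wh L})$ with ghost components $(\pi_i, \pi_{i-1}, \ldots)$: for $i \ge 0$, take the image of $\lambda_\varphi(u)$ under the Witt functoriality of $u \mapsto \pi_i$; for $i < 0$, take the image of $\Fr^{|i|}(\lambda_\varphi(u)) = \lambda_\varphi(\varphi^{|i|}(u))$ under $u \mapsto \pi_0$. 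In either case the Witt components of $\mu_i$ are polynomials with $\O_F$-coefficients in $\pi_{\max(i,0)}$, so $\mu_i \in W_\pi(\O_{K(\pi_{\max(i,0)})})$; comparing ghost components (legitimate since we live in a $\pi$-torsion free ring by Lemma~\ref{W of pi-torsion free}) gives $\Fr(\mu_{i+1}) = \mu_i$, so for any integer shift the sequence $(\mu_n)_n$ defines an element of $\varprojlim_{\Fr} W_\pi(\O_{\wh L})$.

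The second step is to find $i_1 \ge 0$ such that $\underline{x} := (\mu_{n - i_1})_n$ actually lies in $\ALK$, i.e.\ $\mu_{n - i_1} \in W_\pi(\O_{K_m})$ whenever $q^n \mid [K_m : K_1]$. Apply Corollary~\ref{pi and elementary subfields} to fix $i$ and $i_0$ with $\pi_{i_0 + m} \in K_{i+j}$ whenever $q^m \mid [K_{i+j} : K_i]$, and set $i_1 := \lceil v_p([K_i : K_1])/a \rceil$ where $q = p^a$. Since the tower $\{K_m\}_{m \ge 1}$ is wild above $K_1$ each $[K_m : K_1]$ is a power of $p$; the factorization $[K_m : K_1] = [K_m : K_i]\cdot[K_i : K_1]$ then translates divisibility by $q^n$ into divisibility of $[K_m : K_i]$ by $q^{n - i_1}$ (for $m \ge i$). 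Corollary~\ref{pi and elementary subfields} thus yields $\pi_{i_0 + (n - i_1)} \in K_m$, and since $\pi_{k-1} = \varphi(\pi_k)$ we obtain $\pi_{n - i_1} \in K_m$, whence $\mu_{n - i_1} \in W_\pi(\O_{K(\pi_{n - i_1})}) \subseteq W_\pi(\O_{K_m})$. In the residual case $m < i$, the divisibility condition $q^n \mid [K_m : K_1]$ forces $n \le i_1$, so $n - i_1 \le 0$ and $\mu_{n - i_1} \in W_\pi(\O_K) \subseteq W_\pi(\O_{K_m})$ is automatic.

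Finally, since $\beta$ is reduction modulo the ideal generated by $1 \otimes \pi$ (cf.\ Remark~\ref{betamapexplicit}), the $n$-th coordinate of $\beta(\underline{x})$ is the reduction modulo $\pi$ of the first Witt (= first ghost) component of $\mu_{n - i_1}$, namely $\pi_{n - i_1}$; via the canonical identification in Proposition~\ref{independent of mfa} this corresponds to $(\pi_{n - i_1} \bmod \mathfrak{b})_n$, completing the proof. The hard part is the second step: bridging the explicit tower $\{K(\pi_i)\}$, in which we can write down uniformizers, with the intrinsic tower of elementary subfields $\{K_m\}$ governing membership in $\ALK$. This is exactly what the long chain of ramification-theoretic lemmas earlier in this section was arranged to accomplish, culminating in Corollary~\ref{pi and elementary subfields}.
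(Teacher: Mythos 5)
Your proof is correct and follows essentially the same route as the paper's: build the Frobenius-compatible family via the section $\lambda_\varphi$ of Proposition~\ref{lambda prop} and Corollary~\ref{ghost comps lambda}, then use Corollary~\ref{pi and elementary subfields} together with the divisibility bookkeeping between $[K_m:K_1]$, $[K_m:K_i]$ and $q^{i_1}$ to show the shifted family lands in $\ALK$. The only differences are cosmetic: you spell out the negative-index case via $\Fr^{|i|}(\lambda_\varphi(u))$ rather than relying on the convention $\pi_{-i}=\varphi^i(\pi_0)$, and you split the verification by $m\ge i$ versus $m<i$ rather than by the sign of $n-i_1$; these are just two ways of organizing the same argument. (One small slip: injectivity of the ghost map follows from $\pi$-torsion-freeness of the \emph{coefficient} ring $\O_{\wh L}$, not from Lemma~\ref{W of pi-torsion free}, which concerns $\pi$-torsion-freeness of $W_\pi(R)$ itself; the conclusion you need is true, but the citation is misplaced.)
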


\begin{proof}
We first show that there exists a Frobenius-compatible family of elements $\underline{x}_n \in W_{\pi}(\O_{K(\pi_n)})$ such that their first components satisfy
\[
x_{n0} = \pi_n.
\]
Consider the unique continuous $\O_F$-algebra homomorphism $\varphi: \O_F[\![x]\!] \rightarrow \O_F[\![x]\!]$ which sends $x$ to $\varphi(x)$.  By Proposition~\ref{lambda prop}, there is a unique ring homomorphism
\[
\lambda_{\varphi}:  \O_F[\![x]\!] \rightarrow W_{\pi}(\O_F[\![x]\!])
\]
which is a section to the projection $W_{\pi}(\O_F[\![x]\!]) \rightarrow  \O_F[\![x]\!]$ and which satisfies $\Fr \circ \lambda_{\varphi} = \lambda_{\varphi} \circ \varphi$.  By Corollary~\ref{ghost comps lambda}, the element $\lambda_{\varphi}(x)$ has ghost components $(x, \varphi(x), \varphi^2(x), \ldots)$.  Now for any $n$, consider the $\O_F$-algebra homomorphism $\O_F[\![x]\!] \rightarrow \O_{K(\pi_n)}$ determined by $x \mapsto \pi_n$.  This determines a map $W_{\pi}(\O_F[\![x]\!]) \rightarrow W_{\pi}(\O_{K(\pi_n)})$.  Because the ghost components are defined by polynomials with coefficients in $\O_F$, we see that the composite
\[
\O_F[\![x]\!] \rightarrow W_{\pi}(\O_F[\![x]\!]) \rightarrow W_{\pi}(\O_{K(\pi_n)})
\]
sends $x$ to a Witt vector with ghost components $(\pi_n, \varphi(\pi_n), \varphi^2(\pi_n), \ldots) = (\pi_n, \pi_{n-1}, \ldots, \pi_0, \varphi(\pi_0), \ldots)$.  We define $\underline{x}_n$ to be the image of $x$ under this composite.  In particular, $x_{n0} = \pi_n$, and $\Fr(\underline{x}_n) = \underline{x}_{n-1}$.  This completes our construction of the elements $\underline{x}_n \in W_{\pi}(\O_{K(\pi_n)})$.

To construct the desired element in $\ALK$, we need to work with the elementary subfields $K_m$, rather than the fields $K(\pi_n)$.  Let $K_i$ be is as in Corollary~\ref{pi and elementary subfields}, and let $i_1$ be such that $[K_i : K_1] \mid q^{i_1}$.   Using notation of the previous paragraph and the statement of the proposition, we claim that $(\underline{x}_{n-i_1})_n$, which we know is an element of $\wt{\a}^+$, is in fact an element of $\ALK$.  Recalling the definition of $\ALK$, we need to show that if $q^n \mid [K_m : K_1]$, then $\pi_{n-i_1} \in K_m$.  We deduce this as follows:
\begin{align*}
q^n &\mid [K_m : K_1] \\
q^n &\mid [K_m : K_i] [K_i : K_1] \\
q^{n-i_1} &\mid [K_m : K_i] \\
\intertext{and by Corollary~\ref{pi and elementary subfields}, this implies}
\pi_{i_0 + n - i_1} &\in K_m,
\end{align*}
and in particular $\pi_{n - i_1} \in \O_{K_m}$.  This completes the proof, at least for $n - i_1 \geq 0$, but if $n - i_1 < 0$, then $\pi_{n - i_1} = \varphi^{i_1 - n}(\pi_0) \in \O_K \subseteq \O_{K_m}$ for all $m$.
\end{proof}

\begin{proposition} \label{residue ring}
As subrings of $\wt{\e}^+$, we have $\ALK/\pi\ALK \subseteq \e_{L/K}^+$, and moreover $\e_{L/K}$ is a finite purely inseparable extension of $\Frac \left(\ALK/\pi\ALK\right)$. 
\end{proposition}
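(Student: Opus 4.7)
The first assertion—that $\ALK/\pi\ALK \subseteq \e_{L/K}^+$ as subrings of $\wt{\e}^+$—will follow immediately from earlier results: Proposition \ref{image of beta} shows that $\beta|_{\ALK}$ takes values in $\e_{L/K}^+$, and Proposition \ref{prop: W(k)-algebra etc} identifies $\ker(\beta|_{\ALK}) = \pi\ALK$, which together yield the desired embedding.

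For the pure inseparability claim, my plan is to combine Proposition \ref{beta uniformizer} with Corollary \ref{kstrict}. By Lemma \ref{a uniformizer for E}, a uniformizer $t$ of $\e_{L/K}^+ \cong k[\![t]\!]$ is represented by the $q$-power compatible sequence $(\pi_n \bmod \mathfrak{b})_n$. A short computation using $\varphi(\pi_{n+1}) = \pi_n$ together with $\varphi(T) \equiv T^q \bmod \pi$ (and the convention $\pi_{-i} := \varphi^i(\pi_0)$ for the small indices) will show that the element $x \in \ALK$ produced by Proposition \ref{beta uniformizer} satisfies $\beta(x) = t^{q^{i_1}}$ for some $i_1 \geq 0$. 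Since $t^{q^{i_1}} \notin k$, the residue ring $\e^+ := \ALK/\pi\ALK$ strictly contains $k$, and Corollary \ref{kstrict} then applies: $\ALK \cong W_\pi(k)[\![y]\!]$ for some lift $y \in \ALK$, and $\e_{L/K}$ is a finite extension of $\e := \Frac(\e^+)$, with $\e^+ = k[\![\bar y]\!]$.

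It remains to upgrade this finite extension to a purely inseparable one. Since $t^{q^{i_1}} = t^{p^{a i_1}}$ lies in $\e$ and we are in characteristic $p$, $t$ is purely inseparable over $\e$ with minimal polynomial of the form $x^{p^s} - t^{p^s}$, so $[\e(t):\e] = p^s$. On the other hand, $\bar y \in \e^+$ satisfies $\ord_t(\bar y) = m := [\e_{L/K}:\e]$ (by the proof of Corollary \ref{kstrict}), so expressing $t^{p^s} \in \e = k(\!(\bar y)\!)$ as a Laurent series in $\bar y$ and comparing $t$-adic valuations forces $m \mid p^s$. Combined with the inclusion $\e(t) \subseteq \e_{L/K}$, which forces $p^s \leq m$, we conclude $p^s = m$, whence $\e(t) = \e_{L/K}$ is a finite purely inseparable extension of $\e$.

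The main obstacle will be the bookkeeping in identifying $\beta(x)$ with $t^{q^{i_1}}$: this requires tracking the equivalence between $p$-power and $q$-power compatible inverse systems, and between the various quotients $\O_L/\mathfrak{b}$, $\O_L/(\pi)$, and $\O_{\c_F}/(p)$ underlying the definitions of $\e_{L/K}^+$ and $\wt{\e}^+$. Once this identification is secured, the subsequent degree-and-valuation count supplying pure inseparability is short.
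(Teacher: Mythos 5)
Your proof is correct and follows essentially the same path as the paper's: establish the inclusion $\ALK/\pi\ALK \subseteq \e_{L/K}^+$ from Proposition~\ref{image of beta} and Proposition~\ref{prop: W(k)-algebra etc}, then use Proposition~\ref{beta uniformizer} to place $u^{q^{i_1}}$ in the image of $\beta$, and exploit weak closedness (directly in the paper via Proposition~\ref{ALK is weakly closed}, indirectly in your argument via Corollary~\ref{kstrict}) to get the full power series ring $k[\![u^{q^{i_1}}]\!]$ inside $\e^+$.

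The only place you diverge is the final step. The paper stops once it has $k[\![u^{q^{i_1}}]\!] \subseteq \e^+ \subseteq k[\![u]\!]$, leaving the reader to note that any field sandwiched between $k(\!(u^{q^{i_1}})\!)$ and $k(\!(u)\!)$ automatically yields a finite purely inseparable extension $k(\!(u)\!)/\e$ — simply because $u$ is already a $p$-power root of an element of $\e$. Your degree-and-valuation count (comparing $[\e(t):\e]=p^s$ with $[\e_{L/K}:\e]=m=\ord_t(\bar y)$, and pinning $p^s=m$ via divisibility in both directions) arrives at the same conclusion — indeed the stronger one that $\e_{L/K}=\e(t)$ — but it is more work than necessary: the inclusion $k(\!(u^{q^{i_1}})\!)\subseteq \e$ and the identity $u^{q^{i_1}}=u^{p^{ai_1}}$ already make $\e_{L/K}/\e$ purely inseparable of $p$-power degree, with no need to locate the exact degree. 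Both arguments are sound; yours merely extracts a sharper statement that the Proposition did not ask for.
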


\begin{proof}
The inclusion $\ALK/\pi\ALK \subseteq \e_{L/K}^+$ follows from Proposition~\ref{image of beta}.  We know that $\e_{L/K}^+$ is abstractly isomorphic to $k[\![u]\!]$.  We also know that the image of $\beta$ contains the coefficient ring $k$ by Proposition~\ref{prop: W(k)-algebra etc}.  By Proposition~\ref{beta uniformizer}, we know that there exists $v \in \ALK$ such that $\beta(v) = u^{q^{i_1}}$.  Because $\ALK \subseteq \wt{\a}^+$ is weakly closed (Proposition~\ref{ALK is weakly closed}), we deduce that the image of $\beta$ contains $k[\![u^{q^{i_1}}]\!]$, which completes the proof. 
\end{proof}

Proposition~\ref{residue ring} immediately implies our second main theorem from the introduction, Theorem~\ref{MT-iterate}.  




\begin{remark}
Continue to assume $L/K$ is a $\varphi$-iterate extension.  
Here we provide a significantly simpler description of a subring of $W_{\pi}(\wt{\e}_{L/K}^+) \cong \varprojlim W_{\pi}(\O_{\wh{L}})$ which lifts $\e_{L/K}^+$.  Consider the continuous map
\[
\iota_{\underline{\pi}} : {W_{\pi}(k)}[\![u]\!] \rightarrow \varprojlim W_{\pi}(\O_{\wh{L}})
\]
which sends $u$ to $(\underline{x}_n)_n$, where $\underline{x}_n$ is the (unique) Witt vector in $W_{\pi}(\O_{\wh{L}})$ with ghost components $(\pi_n, \pi_{n-1}, \ldots, \pi_0, \varphi(\pi_0), \ldots)$.  (Such Witt vectors exist by the Dwork Lemma, Proposition~\ref{dwork}.)  We write $\a_{\underline{\pi}}$ for the image of $\iota_{\underline{\pi}}$.  It is a $\pi$-Cohen ring for $\e_{L/K}^+$, and it is related to our ring $\a_{L/K}^+$ by $\varphi^{i_1}(\a_{\underline{\pi}}) \subseteq \ALK,$ where $i_1$ is as in Proposition~\ref{beta uniformizer}.
The downside to $\a_{\underline{\pi}}$ is that it seems to depend heavily on the choice of the elements $\pi_i$, and hence it is not obvious from this point-of-view that $\ALK$ can be canonically associated to the $\varphi$-iterate extension $L/K$.  In particular, if $L/K$ is Galois, it is not clear that $\a_{\underline{\pi}}$ is stable under the action of $G_K$ on $\varprojlim W_{\pi}(\O_{\wh{L}})$.  Also it is not clear how to define an analogue of $\a_{\underline{\pi}}$ for an arbitrary strictly APF extension $L/K$ (i.e., for extensions other than $\varphi$-iterate extensions).
\end{remark}

\begin{remark}
Our Proposition~\ref{residue ring}, in conjunction with Proposition~\ref{abelian aut group proposition},
implies that if $L/K$ is both a $\varphi$-iterate extension and Galois, then its Galois group is necessarily abelian.
	It would be interesting to prove this fact directly.
\end{remark}

\bibliography{canonical}
\bibliographystyle{plain}

\end{document}